\newfont{\sheaf}{eusm10 scaled\magstep1}
\newtheorem{thm}{Theorem}[section]
\newtheorem{lemma}[thm]{Lemma}
\newtheorem{prop}[thm]{Proposition}
\theoremstyle{definition}
\newtheorem{definition}[thm]{Definition}
 \newtheorem{notation}[thm]{Notation}
\newcommand{\sslash}{\mathbin{/\mkern-6mu/}}
\DeclareMathOperator{\Hilb}{Hilb}
\DeclareMathOperator{\PGL}{PGL}
\DeclareMathOperator{\Ker}{Ker}
\DeclareMathOperator{\Sing}{Sing}
\DeclareMathOperator{\Aut}{Aut}
\DeclareMathOperator{\Pic}{Pic}
\DeclareMathOperator{\Sec}{Sec}
\def\c1{\operatorname{c_1}}
\def\c2{\operatorname{c_2}}
 \def\Hilb{\operatorname{Hilb}}
\def\Sym{\operatorname{Sym}}
\def\n{\mathbf{N}}
\def\c{\mathfrak{P}}
\def\CC{{\mathbb C}}
\def\ZZ{{\mathbb Z}}
\def\PP{{\mathbb P}}
\def\MM{{\mathbb M}}
\def\A{{\mathcal A}}
\def\R{{\mathcal R}}
\def\M{{\mathcal M}}
\def\O{{\mathcal O}}
\def\I{{\mathcal I}}
\def\Z{{\mathcal Z}}
\def\E{{\mathcal E}}
\def\F{{\mathcal F}}
\def\P{{\mathcal P}}
\def\GG{{\mathbb G}}
\def\x{\times}                   
\def\cong{\simeq}
\def\+{\oplus}                   
\def\*{\otimes}                  
\def\mod{\operatorname{mod}}
\def\Aut{\operatorname{Aut}}
\def\Pic{\operatorname{Pic}}
\def\Sing{\operatorname{Sing}}
\begin{document}

\title[]{(Uni)rational parametrizations of  $\mathcal R_{g,2}$, $\mathcal R_{g,4}$ and $\mathcal R_{g,6}$ in low genera}

\author[A.~L.~Knutsen]{Andreas Leopold Knutsen}
\address{A.~L.~Knutsen, Department of Mathematics, University of Bergen,
Postboks 7800,
5020 Bergen, Norway}
\email{andreas.knutsen@math.uib.no}

\author[M.~Lelli-Chiesa]{Margherita Lelli-Chiesa}
\address{M.~ Lelli-Chiesa, Dipartimento di Matematica,
Universit{\`a} Roma Tre,
Largo San Leonardo Murialdo,  
00146 Roma,  Italy}
\email{margherita.lellichiesa@uniroma3.it} 

\author[A.~Verra]{Alessandro Verra}
\address{A.~ Verra, 
Dipartimento di Matematica,
Universit{\`a} Roma Tre,
Largo San Leonardo Murialdo,  
00146 Roma,  Italy} \email{verra@mat.uniroma3.it}
\begin{abstract}
The moduli space $\R_{g,2n}$ parametrizes double covers of smooth curves of genus $g$ ramified at $2n$ points. We will prove the (uni)rationality of $\R_{g,2}$, $\mathcal R_{g,4}$ and $\mathcal R_{g,6}$ in low genera.
\end{abstract}
\maketitle
\section{Introduction}
The study of finite covers of complex algebraic curves dates back to Riemann  \cite{Ri}, whose proof of the fact that compact Riemann surfaces of genus $g\geq 2$  depend on $3g-3$ parameters relies on the existence of a degree $g+1$ map from any such surface to the Riemann sphere. The particular interest in double covers has been ever-growing in the last fifty years since Mumford's algebraic construction of the Prym variety \cite{Mu}, an abelian variety which is naturally associated \color{black} to any such cover. The \'etale case seized most of the attention mainly because the Prym variety of an \'etale double cover is principally polarized.  However, in the last decade several mathematicians got engaged in  Prym varieties of ramified double covers \cite{MP,NOV,NO}, and this is a good impulse for studying their moduli spaces.

A double cover of curves $\pi:\widetilde C\to C$ is, by standard theory, equivalent to the datum of an effective divisor $x_1+\cdots+x_{2n}\in \Sym^{2n}(C)$ (corresponding to the branch divisor of $\pi$) and of a line bundle $\eta\in \Pic^{-n}(C)$ such that $\eta^{\otimes 2}\simeq \mathcal O_C(-x_1-\ldots-x_{2n})$. Double covers of genus $g$ curves ramified at $2n$ points are thus parametrized by the following moduli space 
\begin{equation*}
\begin{split}
\mathcal R_{g,2n}:=\left\{(C,x_1+\ldots+x_{2n},\eta)\,|\,\right. [C]\in\M_g,\;&x_i\in C\,\forall\, 1\leq i\leq n\\
&\left.\,\eta\in\Pic^{-n}(C),\; \eta^2=\mathcal O_C(-x_1-\ldots-x_{2n}) \right\}.
\end{split}
\end{equation*}
For $g=0$ one recovers the space $\mathcal H_{n+2}$ of hyperelliptic curves of genus $n-1$, that was proved to be rational for any $n$ \cite{Bo,K1,K2}.  In the case $g=1$, the space $\R_{1,2n}$ is birational to the moduli space of bielliptic curves of genus $n+1$, which is known to be rational for $3\leq g\leq 5$ \cite{BdC1,BdC2,CdC1} and unirational for $g\geq 6$ \cite{CdC2}.  

For $g\geq 2$, only the \'etale case (that is, $n=0$) has been largely investigated: in this case the moduli space is denoted by $\R_g$ and its points parametrize Prym curves, that is, pairs $(C,\eta)$, where $C$ is a smooth curve of genus $g$ and $\eta$ a non-trivial $2$-torsion line bundle on it. The moduli space $\R_g$ is known to be rational for $2\leq g\leq 4$ \cite{Do2,Ca}, unirational for $5\leq g\leq 7$ \cite{Don,ILS,V1,V2,FV1,FV2}, and uniruled for $g=8$ \cite{FV2}. On the other hand, it is of general type for $g\geq 13$ and $g\neq 16$ \cite{FL,Br}. 

As soon as $g\geq 2$ and $n\geq 1$, the geometry of $\mathcal R_{g,2n}$ becomes almost unexplored. Only recently, Bud \cite{Bu} proved the irreducibility of $\mathcal R_{g,2n}$ and initiated the study of its birational geometry when $n=1$, obtaining that $\mathcal R_{g,2}$ is of general type for $g\geq 16$ and uniruled for $3\leq g\leq 6$. Up to our knowledge, the only further result in the literature is due to the third named author \cite{NOV} and concerns the rationality of $\R_{2,6}$. This paper proves the following rationality/unirationality results for $\mathcal R_{g,2n}$ in the cases $n=1,2,3$ for low values of $g$:

\begin{thm}\label{pescara}
\begin{itemize}
\item[(a)] The moduli space $\R_{g,2}$ is unirational for $3\leq g\leq 5$, while possesses a unirational divisor for $g=6$. \color{black}
\item[(b)]  The moduli space $\R_{g,4}$ is unirational for $2\leq g\leq 5$.
\item[(c)] The moduli space $\R_{g,6}$ is rational for $g=2$ and  unirational for $g=3$. \color{black}
\end{itemize}
\end{thm}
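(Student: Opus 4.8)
The plan is to prove Theorem~\ref{pescara} case by case, each time exhibiting a (uni)rational variety $\mathcal P$ with a dominant rational map $\mathcal P\to\R_{g,2n}$, and sharpening to rationality --- this is what happens for $g=2$ --- when the construction presents $\R_{g,2n}$ as a projective bundle over a rational base after a quotient by a connected linear group. The starting point is a reinterpretation of the moduli problem: writing $L:=\eta\v\in\Pic^n(C)$, a point of $\R_{g,2n}$ is the same as a triple $(C,L,D)$ with $[C]\in\M_g$, $L\in\Pic^n(C)$ and $D=x_1+\cdots+x_{2n}\in|L^{\otimes2}|$ (generically reduced), or equivalently the ramified double cover $\widetilde C\to C$ of genus $2g-1+n$, realized in the total space of $L$ as $\{t^2=s_D\}$. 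In particular $\R_{g,2n}$ fibers over the universal $\Pic^n$ over $\M_g$, restricted to the locus where $L^{\otimes2}$ is effective, with fiber over $(C,L)$ the linear system $|L^{\otimes2}|\cong\PP^N$ --- here $N=2n-g$ when $L^{\otimes2}$ is nonspecial and $N=0$ at the generic point when $n=1$ --- and one has $\dim\R_{g,2n}=3g-3+2n$.

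For each $(g,n)$ in the statement I would realize the generic pair $(C,\eta)$ by a model in which $\eta$ is cut out by an explicit linear system. A natural source is the fact that the anti-invariant part of $H^0(K_{\widetilde C})$ is $H^0(K_C\otimes\eta\v)$, a system of the expected dimension $g-1+n$ and degree $2g-2+n$ which for $n\geq1$ maps $C$ birationally onto a curve in $\PP^{g-2+n}$: a nodal plane curve when $g-2+n=2$ (e.g. a three-nodal plane quintic for $(g,n)=(3,1)$), and a curve on a Hirzebruch, del Pezzo, or blown-up plane surface $S$ for larger $g$. On such an $S$ one picks a divisor class $A$ with $A\cdot C=n$, so that $\eta=\O_S(-A)|_C$ and the branch divisor $D$ is cut on $C$ by a member of $|2A|$; the pairs $(C,\eta)$ so obtained vary in a parameter space $\mathcal P$ that is (uni)rational, being a Severi variety of nodal curves, or a linear system on a del Pezzo surface, up to projective equivalence. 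Adjoining $D\in|L^{\otimes2}|$, cut on $C$ by hypersurfaces of the ambient space, preserves (uni)rationality; and for $g=2$, where $\M_2$ is itself rational as $\PP^6/\PGL_2$, one checks that the residual $\PP^N$-bundle splits off by the no-name lemma, yielding rationality of $\R_{2,4}$ and recovering that of $\R_{2,6}$ of \cite{NOV}.

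The crux is dominance, which has two ingredients of unequal difficulty. That the divisors $D=\Gamma\cap C$ with $\Gamma\in|2A|$ sweep out all of $|L^{\otimes2}|$ follows, via $0\to\O_S(2A-C)\to\O_S(2A)\to L^{\otimes2}\to0$, from a Ramanujam/Kawamata--Viehweg-type vanishing $H^1(S,\O_S(2A-C))=0$ on the rational surface $S$, routine once the numerics are arranged. The substantial point is an Abel--Prym-type surjectivity: as $C$ moves in its linear system on $S$, the restricted line bundle $\eta=\O_S(-A)|_C$ must sweep out a dense subset of the space of square roots of effective degree-$2n$ divisors on the general genus-$g$ curve. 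I would prove this by combining the dimension count with a suitable degeneration of $C$ --- to a reducible or maximally nodal member of the system --- on which both the curve and its square-root twist can be computed by hand. This is where I expect essentially all the work to lie, and what forces a careful choice of surface model in the genus $4$ and $5$ cases.

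Finally, $(g,n)=(6,1)$ is genuinely exceptional: the natural surface for genus $6$ is the rigid quintic del Pezzo $S_5\subset\PP^5$, on which the general curve of that genus is an anticanonical-squared curve, but every class $A$ on $S_5$ has even intersection with $|{-}2K_{S_5}|$, so no surface class can cut a branch divisor of odd degree $2n=2$. The construction therefore covers only a $\codim 1$ subfamily of $\R_{6,2}$, which is rationally parametrized --- this is the unirational divisor in part (a) --- and a full unirationality statement for $\R_{6,2}$ would require a different idea.
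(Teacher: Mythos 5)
Your approach is genuinely different from the paper's, which works on Nikulin $K3$ surfaces of non-standard type rather than rational surfaces. On such a $K3$, the Nikulin lattice supplies eight disjoint $(-2)$-curves $N_i$ with $\sum N_i\sim 2M$, so that for $C\in|R|$ the datum $(C,\sum C\cap N_i, M^\vee|_C)\in\R_{g,2n}$ comes for free, and the dimension count $\dim\F_h^{\n,ns}+\dim|R|=11+g=3g-3+2n$ is exactly right in the ranges claimed. The content is then (i) (uni)rational parametrizations of $\F_h^{\n,ns}$, obtained from projective models of $S$ as complete intersections of quadrics, cubics or quartics through a rational or elliptic normal curve $\Gamma$, and (ii) dominance of $r_{g,2n}$, proved by specializing $C$ to the reducible curve $\Gamma+\sum N_i$ and counting such hypersurfaces with prescribed singularities via explicit cohomology computations on $S$.

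As written, your programme has concrete gaps. For $(g,n)=(3,1)$, the Prym-canonical system $|\omega_C\otimes\eta^\vee|$ realizes $C$ as a $3$-nodal quintic; on $S=\mathrm{Bl}_3\PP^2$ the class $A$ with $\O_S(-A)|_C\cong\eta$ is generically forced (by injectivity of $\Pic S\to\Pic C$) to be $-l+E_1+E_2+E_3$, and then $|2A|$ is empty. So the branch divisor is not cut by any surface class, and the codimension-one condition that $\eta^{\otimes-2}$ be effective is not built into your parameter space; this is precisely the ``Abel--Prym surjectivity'' you flag and leave to unspecified degenerations, and it is exactly where the paper's specialization lemmas do the work. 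Likewise, your parity heuristic on the quintic del Pezzo is about the canonical model $C\in|{-}2K_{S_5}|$, not the Prym-canonical embedding $|\omega_C\otimes\eta^\vee|$ used elsewhere in your set-up, and at best it shows that one particular rational-surface construction fails; that the image is exactly a divisor and not of higher codimension is established in the paper by showing the general fiber of $r_{6,2}$ is one-dimensional, via the computation $\dim|\I_{C/\PP^5}(2)|=3$. Finally, the assertion that $\R_{2,4}$ is rational exceeds what the theorem claims (unirationality); the no-name lemma would split off the $\PP^N$-bundle, but leaves untreated the rationality of the base, the universal $\Pic^n$ over $\M_2$ restricted to the relevant locus, which you do not justify.
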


This is obtained by exhibiting (uni)rational parametrizations of the above moduli spaces using Nikulin surfaces of non-standard type, but in the case of $\mathcal R_{6,2}$.\par  

 A Nikulin surface of genus $h\geq 2$ is a polarized $K3$ surface $(S,H)$ endowed with an \'etale double cover $\pi$ branched along eight irreducible ($-2$)-curves $N_1,\ldots,N_8$, which are disjoint both pairwise and from $H$; the line bundle $M$ defining the double cover $\pi$ thus satisfies $M\sim N_1+\cdots+N_8$. A Nikulin surface is called of non-standard type if $h$ is odd and the lattice generated by $H,N_1,\ldots,N_8,M$ has index $2$ in $\Pic(S)$. If this occurs, then $H(-M)\sim R+R'$ where $|R|$ and $|R'|$ are base point free linear systems on $S$ \cite{GS} such that the restriction of $\pi$ to the inverse image of a general curve $C$ in either of them defines a branched cover of $C$ ramified at $2$, $4$ or $6$ points depending on the congruence of $h$ modulo $4$ and on whether $C\in |R|$ or $C\in |R'|$.  Furthermore, by varying $h$, one obtains all possible values for the genus $g$ of $C$. This fact was exploited for the first time in \cite{DL} to show that, if $\widetilde C\to C$ is a general double cover of a genus $g$ curve ramified at $2$, $4$ or $6$, then $\widetilde C$  is Brill-Noether general. 
For dimensional reasons, one may hope for a general element of $\mathcal R_{g,2n}$ to lie on a Nikulin surface of non-standard type whenever $g\leq 6$ and $n=1$, or $g\leq 5$ and $n=2$ and finally for $g\leq 4$ if $n=3$. The proof of Theorem \ref{pescara} consists in verifying this expectation, except when $g=6$ and $n=1$, or $g=4$ and $n=3$. The space $\R_{g,2n}$ is thus dominated by a projective bundle $\P$ over the moduli space of Nikulin surfaces of genus $h$ and non-standard type, that is denoted by  $\F_h^{\n,ns}$, and one reduces to proving (uni)rationality of the latter:
\begin{thm}\label{moduli}
The moduli space $\F_h^{\n,ns}$ is rational for $h=23,11$ and unirational for $h=21,19,17, 15,13,9$.
\end{thm}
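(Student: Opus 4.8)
The plan is to realize $\F_h^{\n,ns}$ as a quotient and then find a rational or unirational parametrization by building the generic Nikulin surface of non-standard type from simpler projective data. The starting point is the lattice-theoretic description: a Nikulin surface of genus $h$ and non-standard type has Picard lattice containing, with index $2$, the rank-$9$ lattice $\langle H, N_1,\ldots,N_8\rangle$ with $H^2=2h-2$, and the splitting $H(-M)\sim R+R'$ produces two linear systems whose general members are curves of controlled genus. Since we are free to choose whether to work with $|R|$ or $|R'|$ and to vary $h$ through the congruence classes mod $4$, it suffices to produce, for each listed value of $h$, an explicit unirational (or, for $h=23,11$, rational) family of such $K3$'s. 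The most economical route is to pick one of the two elliptic-or-low-genus sublinear systems, say the one of smallest relevant genus, and to present the generic Nikulin surface as a (possibly minimal) resolution of a concrete projective model — e.g. a complete intersection, a hypersurface in a weighted projective space, or a surface swept out by a pencil of curves of small genus carrying eight disjoint $(-2)$-curves — whose moduli manifestly form a (uni)rational variety.

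The key steps, in order, would be: first, fix $h$ and identify via the index-$2$ overlattice the effective class $R$ (or $R'$) of minimal genus $p$; the eight nodal curves $N_i$ together with the $\pi$-invariant polarization then cut $S$ into a genus-$p$ fibration (or a family of curves in $|R|$) with prescribed behaviour along the $N_i$. Second, degenerate or specialize so that this $K3$ acquires an explicit model: for the rational cases $h=23$ and $h=11$ one expects the genus of $R$ to be small enough (e.g. $1$ or $2$) that $S$ is, after contracting the $N_i$, a standard Nikulin surface or a double plane / elliptic fibration whose Weierstrass-type data form an affine space modulo a connected linear group acting almost freely, giving rationality by the no-name lemma or a direct slice. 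Third, for the remaining unirational cases $h=21,19,17,15,13,9$, replace ``rational family of models'' by ``rational family dominating the moduli space'': exhibit a parameter space $\mathcal{T}$ (sections of some bundle on a fixed rational variety, cut out by finitely many linear or bilinear conditions encoding the eight disjoint $(-2)$-curves and the non-standard index-$2$ condition) together with a dominant map $\mathcal{T}\dashrightarrow \F_h^{\n,ns}$; then check $\dim\mathcal{T}$ minus the dimension of the acting automorphism group equals $\dim \F_h^{\n,ns}=11$, which forces dominance once the generic fibre is shown nonempty of the expected dimension. Fourth, in every case verify the generic such surface is genuinely of non-standard type (the overlattice is index $2$, not the full rank-$9$ lattice and not some larger overlattice), by a specialization/monodromy argument or by exhibiting one explicit member.

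The main obstacle I expect is the third step for the ``hardest'' unirational cases — concretely, showing that the eight disjoint $(-2)$-curves $N_1,\ldots,N_8$ together with the non-standard splitting can be imposed on a sufficiently flexible projective model without over-determining it, i.e. that the locus in $\mathcal{T}$ carrying all eight nodes in ``even'' position (the Nikulin/Prym condition $M\sim N_1+\cdots+N_8$ with $M$ $2$-divisible in the appropriate sense) remains unirational of the correct dimension. This is precisely the point where standard Nikulin constructions become delicate, and it is also where the non-standard index-$2$ refinement interacts non-trivially with the geometry; controlling it may require choosing the auxiliary model adapted to each $h$ rather than a uniform construction. A secondary difficulty is the dimension bookkeeping: one must match $\dim\mathcal{T} - \dim(\text{group}) = 11$ on the nose, so any redundancy in the parametrization (e.g. extra projective automorphisms preserving the configuration) must be identified and quotiented out, and for the two rational cases one additionally needs the group action to be ``special'' enough (e.g. a chain of affine-space fibrations or an almost-free action of a special group) for the no-name lemma to upgrade unirationality to rationality.
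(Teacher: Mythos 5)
Your proposal reproduces the paper's high-level philosophy (build the generic non-standard Nikulin surface from an explicit projective model, count parameters, quotient by the relevant automorphism group, apply a descent argument for the two rational cases) but it stops short of the key geometric ideas that make the construction actually run, and in a couple of places it guesses at the wrong auxiliary model.

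First, the crucial device in the paper is not the linear system $|R|$ or $|R'|$ of smallest genus, but a \emph{third} divisor class: a $(-2)$-class of the form $\Gamma\sim R-\sum_{i\in I}N_i$ (or an elliptic pencil $J\sim R-N_5-\cdots-N_8$ when $h\equiv 1\bmod 4$) whose image under $\varphi_{R'}$ is a rational normal curve, respectively an elliptic normal curve, in the ambient $\PP^{g'}$. This curve passes through the nodes of $\overline{S'}$ and has the remaining $N_i$'s among its bisecant lines. The whole parametrization consists in reconstructing $\overline{S'}$ as a complete intersection (for $h=23,21,19,17$) or a nodal quartic in $\PP^3$ (for $h=15,13,11$) containing $\Gamma$ (or $J$), singular at prescribed points of $\Gamma$, and containing prescribed bisecant lines. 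Your sketch speaks of weighted projective models, double planes and Weierstrass data, none of which appear; and for $h=23$ the ``smallest relevant genus'' is $g(R')=5$, not $1$ or $2$, so the model is a $(2,2,2)$-complete intersection in $\PP^5$, not a double plane or elliptic surface.

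Second, the step you flag as the ``main obstacle'' --- imposing the eight $(-2)$-curves and the index-$2$ condition on a flexible model without over-determining it --- is exactly what the paper solves, and it is not solved by general position reasoning alone. The enabling tool is the isomorphism (Propositions~\ref{razionale} and~\ref{ellittica}) between the linear system of quadrics through a rational normal curve $\Gamma\subset\PP^n$ (resp.\ elliptic normal curve $J\subset\PP^n$) and a linear system of plane curves in $\Sym^2\Gamma\cong\PP^2$ (resp.\ curves on the elliptic ruled surface $\Sym^2 J$), sending singular quadrics to curves containing the section $\sigma_x$. This converts the condition ``the net of quadrics cutting out $\overline{S'}$ has a member singular at each node of $\Gamma$ and all members contain the chosen bisecants'' into explicit incidence conditions on plane curves, which is how one verifies that the family of such complete intersections is nonempty of the expected dimension $11$ and dominates $\F_h^{\n,ns}$. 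Without this dictionary I do not see how your step three would be carried through. Likewise, the rationality for $h=23$ is not obtained by a no-name-lemma slice of an affine space of Weierstrass data, but by exhibiting the parameter space as a $\PP^7$-bundle (via Kempf's descent lemma) over a quotient that is birational to a $\PP^1$-bundle over $\R_2$, whose rationality is then imported. Finally, the verification that the generic member really has Picard rank $9$ and is of non-standard index $2$ (your step four) is done in the paper not by monodromy but by a dimension count: reducibility of a constructed $(-2)$-class would force Picard rank $\geq 10$, hence a moduli dimension $<11$, contradicting the parameter count. So the overall scaffold of your plan is sound, but the missing specific ingredients --- the auxiliary rational/elliptic normal curve, the quadric-to-plane-curve correspondence, the concrete projective models for each $h$, and the Kempf-descent route to rationality --- are exactly what turn the sketch into a proof, and they are nontrivial to supply.
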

We stress that the cases $h=9,11$ had already been established  by the same authors in \cite{KLV2}. We also recall that the other type of Nikulin surfaces, which are called standard, were used in \cite{FV1} to provide unirational parametrizations of the Prym moduli space $\R_g$ in low genera.
It is unknown whether for high enough values of $h$ the moduli space $\F_h^{\n,ns}$ and its counterpart in the standard case become of general type.

In our construction we consider the projective model of a general Nikulin surface of non-standard type defined by either $|R|$ or $|R'|$ and find in the other linear system $|R'|$ or $|R|$, respectively, a reducible curve consisting of some lines and a distinguished component, which is either a rational  or elliptic normal curve. This component is at the core of our parametrizations.

The paper is organized as follows. In section \ref{sec:def} we recall the definition and main features of Nikulin surfaces of standard and non-standard type and introduce relevant moduli maps from some bundles on $\F_h^{\n,ns}$ to the moduli space $\R_{g,2n}$ for $n=1,2,3$. Section \ref{figo} contains results that relate the space of quadrics in $\PP^n$ containing either a rational or an elliptic normal curve. The proofs of Theorems \ref{pescara} and \ref{moduli} is the content of the remaining sections, each of which contains the results related to $\F_h^{\n,ns}$ for a specific value of $h$. Throughout the paper the irreducibility of $\F_h^{\n,ns}$ is constantly used; however, when $h \equiv 1 \; \mod 4$ the numerical properties of $R$ and $R'$ are the same and so there is no way to distinguish among them. We therefore introduce a double cover $\widehat{\F}_h^{\n,ns}$ of $\F_h^{\n,ns}$ that parametrizes Nikulin surfaces marked with a line bundle having the numerical properties of $R$ and $R'$. The irreducibility of $\widehat{\F}_h^{\n,ns}$ is then established in Appendix \ref{appendix}.

\section{Nikulin surfaces of non-standard type and curves in $\R_{g,2}$, $\R_{g,4}$ and $\R_{g,6}$} \label{sec:def}

We recall some basic definitions and properties.
\begin{definition} \label{def:Nik}
  A primitively polarized  {\it Nikulin} surface of genus $h \geq 2$ is a triple $(S,M,H)$ such that 
$(S,H)$ is a primitively genus $h$ polarized $K3$ surface and $\O_S(M)\in \Pic S$ satisfies the following conditions:
\begin{itemize}
\item $ N_1+\cdots+N_8 \sim 2M$ for $8$ pairwise disjoint ($-2$)-curves $N_1,\ldots,N_8$;
\item $H \cdot M=0$.
\end{itemize}
\end{definition}
The Picard group of a Nikulin surface $S$ always contains the rank $8$ sublattice of $\Pic S$ generated by $N_1,\ldots,N_8$ and $M$, which is called  {\it Nikulin lattice} and is denoted by $\mathbf{N}$.

Since $H\cdot M=0$,  we get a rank $9$ sublattice
\[ \Lambda_h:= \ZZ[H] \+_{\perp} \mathbf{N} \subset \Pic S.\]

A Nikulin surface $(S,M,H)$ is said {\it of standard type} if the embedding $\Lambda_h \subset \Pic S$ is primitive,  and of {\it non-standard type} otherwise. By \cite[Prop. 2.2]{vGS}, in the non-standard case the genus $h$ is odd and the embedding $\Lambda_h \subset \Pic S$ has index $2$. More precisely (cf. \cite[Prop. 2.1 and Cor. 2.1]{GS}), the surface $S$ carries two line bundles $R$ and $R'$ whose class, up to renumbering the curves $N_i$, can be written as follows:
 $$R\sim \frac{H-N_1-N_2}{2},\;\;R'\sim\frac{H-N_3-\cdots-N_8 }{2}\textrm{  if  } h \equiv 3 \; \mod 4;$$ 
 $$R\sim \frac{H-N_1-N_2-N_3-N_4}{2},\;\; R'\sim \frac{H-N_5-N_6-N_7-N_8}{2}\textrm{  if  } h \equiv 1 \; \mod 4.$$
Setting $g:=g(R)$ and $g':=g(R')$, we have $g=(h+1)/4$ and $g'=(h-3)/4$ in the former case, while $g=g'=(h-1)/4$ in the latter one.

We denote by $\F_h^{\n,ns}$  the coarse moduli spaces of genus $h$ primitively polarized Nikulin surfaces of non-standard type, which is irreducible of dimension $11$ by \cite[\S 3]{Do}. 

By \cite[Prop. 2.1 and Cor. 2.1]{GS}, a very general $(S,M,H)$ in $\F_h^{\n,ns}$ has Picard number $9$ and
\begin{equation}
  \label{eq:pic9}
\Pic S \cong \ZZ[R] \+ \mathbf{N}.
\end{equation}
Furthermore, as soon as $h\geq 5$, both $R$ and $R'$ are globally generated and satisfy $h^1(R)=h^1(R')=0$ (cf. \cite[Prop. 2.3(i)]{KLV2}). 

Let $C\in |R|$ be a smooth irreducible curve and look at the restriction $M|_C$, which is a line bundle of degree $1$ or $2$ depending on the congruence of $h$ modulo $4$. More precisely, setting $\{x_i\}:=C\cap N_i$, we have $M|_C^{\otimes 2}\simeq \mathcal O_C(x_1+x_2)$ if  $h \equiv 3 \; \mod 4$ and $M|_C^{\otimes 2}\simeq \mathcal O_C(x_1+x_2+x_3+x_4)$ if  $h \equiv 1 \; \mod 4$. In the former case the triple $(C,x_1+x_2,M^\vee|_C)$ thus defines a point of $\mathcal R_{g,2}$, while $(C,x_1+x_2+x_3+x_4,M^\vee|_C)\in\mathcal R_{g,4}$ in the latter case. Analogously, any smooth irreducible curve $C'\in |R'|$ defines either a point $(C', x_3+x_4+x_5+x_6+x_7+x_8,M^\vee|_{C'})\in \mathcal R_{g',6}$ if $h \equiv 3 \; \mod 4$, or a point $(C',x_5+x_6+x_7+x_8,M^\vee|_{C'})\in\R_{g',4}$ otherwise. 

We now focus on the case $h \equiv 3 \; \mod 4$, and denote by $\P_{g}$ (respectively, $\P'_{g'}$) the moduli space of $4$-tuples $(S,M,H,C)$ where $(S,M,H)\in \F_h^{\n,ns}$ and $C\in |R|$ (resp., $C\in |R'|$) is a smooth irreducible curve. There are natural maps
\begin{equation}\label{modulimap}
\xymatrix{ 
&\P'_{g'}\ar[ld]_{r_{g',6}}     \ar[rd]^{q'_{g'}}&&\P_{g} \ar[ld]_{q_{g}}     \ar[rd]^{r_{g,2}} &\\
\R_{g',6}&&\F_h^{\n,ns}&     &\R_{g,2},
}
\end{equation}
where  $q_{g}$ and $q'_{g'}$ are the natural forgetful morphisms, while $r_{g,2}$ sends a point $(S,M,H,C)\in \P_{g}$ to $(C,x_1+x_2,M^\vee|_C)\in \mathcal R_{g,2}$, and $r_{g',6}$ is defined analogously.

In the case $h \equiv 1 \; \mod 4$, the line bundles $R,R'\in \Pic(S)$ have the same numerical properties and thus cannot be distinguished. We will therefore consider the moduli space $\widehat{\F}_h^{\n,ns}$ of quadruples $(S,M,H,R)$ such that $(S,M,H) \in \F_h^{\n,ns}$ and $R$ is a line bundle on $S$ such that $H-2R$ is the sum of four of the eight $(-2)$-curves in the Nikulin lattice. We will call such quadruples {\it marked non-standard Nikulin surfaces}. 
The forgetful map $\widehat{\F}_h^{\n,ns} \to \F_h^{\n,ns}$ realizes $\widehat{\F}_h^{\n,ns}$ as a double cover of $\F_h^{\n,ns}$, which turns out to be irreducible (cf. Appendix \ref{appendix}). As before, the moduli space ${\widehat\P}_{g}$ parametrizing $5$-tuples $(S,M,H,R,C)$, with $(S,M,H,R)\in \widehat{\F}_h^{\n,ns}$ and $C\in |R|$, admits two natural maps:
\begin{equation}\label{modulimap}
\xymatrix{ 
&{\widehat\P}_{g} \ar[ld]_{\tilde q_{g}}     \ar[rd]^{r_{g,4}} &\\
\widehat{\F}_h^{\n,ns}&     &\R_{g,4}.
}
\end{equation}
Since $\mathrm{dim}\,\mathcal R_{g,2n}=3g-3+2n$, $\dim\,\mathcal F_h^{\mathbf N,ns} =11$ and a genus $g$ linear system on a $K3$ surface has dimension $g$, the map $r_{g,2n}$ is expected to be dominant for $g\leq 6$ if $n=1$, for $g\leq 5$ when $n=2$ and for $g\leq 4$ if $n=3$.

In our proofs, the morphisms 
$$\varphi_{R'}:S\longrightarrow \overline{S'}\subset \PP(H^0(R')^\vee)$$
and 
$$\varphi_{R}:S\longrightarrow \overline{S}\subset \PP(H^0(R)^\vee)$$
will play a crucial role. The morphism $\varphi_{R'}$ contracts the curves $N_i$ such that $N_i\cdot R'=0$ to nodes $x_i$ of $\overline{S'}$ and maps the remaining $N_i$ to lines. The analogue holds for $\varphi_{R}$. We stress that the restriction of $\varphi_{R'}$ (respectively, $\varphi_{R}$) to any irreducible curve $C\in |R|$ (resp., $C\in |R'|$) is induced by the complete linear system $\vert \omega_C\otimes M\vert$.

\begin{notation}\label{pinguini}
Given any morphism between surfaces whose restriction to a curve $D$ is an isomorphism, we will denote the image curve still by $D$. 
\end{notation}

\section{Quadrics containing particular curves}\label{figo}

\subsection{Quadrics containing a rational normal curve}\label{federico}

Let $\Gamma\subset \PP^n$ be a degree $n$ rational normal curve and let $\Sec\Gamma\subset \PP^n$ be its secant variety, which has degree $n-1\choose{2}$. We consider the incidence variety 
$$
I_\Gamma:=\left\{(z,x+y)\in \Sec\Gamma\times \Sym^2\Gamma\,\,|\,\, z\in \langle x,y\rangle   \right\},
$$
along with the two projections $p:I_\Gamma\to \Sec\Gamma$ and $q: I_\Gamma\to \Sym^2\Gamma$. The map $p$ is birational and contracts the locus 
$$
\mathrm{Exc} (p)=\left\{(x,x+y)\in \Sec\Gamma\times \Sym^2\Gamma\,\,|\,\, x,y\in \Gamma   \right\}\simeq \Gamma\times \Gamma,
$$
to $\Gamma\subset \Sec\Gamma$. On the other hand, $q$ gives $I$ the structure of a $\PP^1$-bundle over $\Sym^2\Gamma$. 

From now on, we identify $\Sym^2(\Gamma)$ with $\PP^2$ by choosing an embedding of $\Gamma$ as a conic $\Delta\subset \PP^2$ and identifying a divisor $x+y\in \Sym^2(\Gamma)=\Sym^2(\Delta)$ with the pole of the line $\langle x,y\rangle$ with respect to $\Delta$ if $x\neq y$, and with the point $x\in \Delta$ if $x=y$. In this way, the diagonal of $\Sym^2(\Gamma)$ gets identified with $\Delta$, and for any $x\in \Gamma$ the curve $\sigma_x:=\{x+y\in \Sym^2(\Gamma)\,|\,y\in \Gamma\}$ is identified with the tangent line to $\Delta$ at $x$. 

Let now $|\I_{\Gamma/\PP^n}(2)|$ denote the space of quadrics containing $\Gamma\subset \PP^n$. 
\begin{prop}\label{razionale}
\begin{itemize}
\item[(i)] There exists an isomorphism 
$$
\alpha_\Gamma: |\I_{\Gamma/\PP^n}(2)|\longrightarrow |\mathcal O_{\PP^2}(n-2)|
$$ 
defined by setting $\alpha_\Gamma(Q):=q(p^{-1}(Q\cap\Sec \Gamma))$ for any $Q\in |\I_{\Gamma/\PP^n}(2)|$.
\item[(ii)] A quadric $Q\in |\I_{\Gamma/\PP^n}(2)|$ is singular at a point $x\in \Gamma$ if and only if $\alpha_\Gamma(Q)$ contains $\sigma _x$.

\end{itemize}
\end{prop}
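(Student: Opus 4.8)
The plan is to analyze the incidence correspondence $I_\Gamma$ concretely in coordinates and track what happens to quadrics under the two projections. Parametrize $\Gamma$ by $t\mapsto [1:t:t^2:\cdots:t^n]$, so that a point of $\Sym^2\Gamma$ corresponds to an unordered pair $\{s,t\}$, equivalently to a binary quadratic form $\lambda u^2+\mu uv+\nu v^2$ up to scalar, giving the identification $\Sym^2\Gamma\cong\PP^2$ with coordinates $[\lambda:\mu:\nu]$; the diagonal $\Delta$ is the conic $\mu^2=4\lambda\nu$. A point $z$ on the chord $\langle \gamma(s),\gamma(t)\rangle$ is $z=[a\gamma(s)+b\gamma(t)]$, and the fibre of $q$ over $\{s,t\}$ is the $\PP^1$ with coordinate $[a:b]$. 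The key computation is: for a quadric $Q$ with symmetric matrix $A$, the condition $z\in Q$ reads $(a\gamma(s)+b\gamma(t))^T A (a\gamma(s)+b\gamma(t))=0$, i.e. $a^2\,\gamma(s)^TA\gamma(s)+2ab\,\gamma(s)^TA\gamma(t)+b^2\,\gamma(t)^TA\gamma(t)=0$. If $Q\supset\Gamma$ the first and last terms vanish identically, so on $p^{-1}(Q\cap\Sec\Gamma)$ outside $\Exc(p)$ the equation forces $ab\cdot B_Q(s,t)=0$ where $B_Q(s,t):=\gamma(s)^TA\gamma(t)$ is the (symmetric) bilinear form associated to $Q$ evaluated on the curve. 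Hence $q(p^{-1}(Q\cap\Sec\Gamma))$ is, set-theoretically, the locus $\{\{s,t\}: B_Q(s,t)=0\}$ in $\Sym^2\Gamma$ (the branches $a=0$, $b=0$ map into $\Exc(p)$, i.e. onto $\Gamma$, and are not part of the honest image for generic $Q$; one argues this carefully).

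For part (i), I would observe that $B_Q(s,t)$, as a function of $(s,t)\in\PP^1\times\PP^1$, is a section of $\O(n,n)$ which vanishes on the diagonal (because $Q\supset\Gamma$ means $B_Q(t,t)=0$) and is symmetric; dividing out the diagonal $\Delta$ leaves a symmetric section of $\O(n-2,n-2)$ on $\PP^1\times\PP^1$, equivalently — via the standard identification of symmetric $\O(d,d)$ sections with degree-$d$ curves in $\Sym^2\PP^1=\PP^2$ — a curve of degree $n-2$ in $\PP^2=\Sym^2\Gamma$. This is exactly $\alpha_\Gamma(Q)$, and the assignment $Q\mapsto B_Q$ is linear in the matrix entries of $A$ modulo the quadrics not containing $\Gamma$; so $\alpha_\Gamma$ is a well-defined linear map $|\I_{\Gamma/\PP^n}(2)|\to|\O_{\PP^2}(n-2)|$. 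It is an isomorphism because both projective spaces have the same dimension: $\dim|\I_{\Gamma/\PP^n}(2)|=\binom{n}{2}-1=\binom{n-1+1}{2}-1$ wait — more precisely $h^0(\I_{\Gamma/\PP^n}(2))=\binom{n+2}{2}-(2n+1)=\binom{n-1}{2}$ hold on, let me just say: $\dim|\I_{\Gamma}(2)|=\binom{n}{2}-1$ and $\dim|\O_{\PP^2}(n-2)|=\binom{n}{2}-1$, so it suffices to show injectivity, i.e. that $B_Q\equiv0$ on $\Gamma\times\Gamma$ forces $Q\equiv0$; this follows because $\gamma(s)$ spans $\PP^n$, so the bilinear form $v^TAw$ vanishing on all $(\gamma(s),\gamma(t))$ forces $A=0$. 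Alternatively one can cite that a quadric containing a nondegenerate curve is determined by its restricted bilinear form.

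For part (ii), the point is that $Q$ is singular at $x=\gamma(s_0)\in\Gamma$ precisely when the tangent hyperplane degenerates there, i.e. $A\gamma(s_0)=0$, equivalently $\gamma(s_0)^TA\gamma(t)=B_Q(s_0,t)=0$ for all $t$ — that is, $B_Q$ vanishes identically on the line $\{s_0\}\times\PP^1\subset\PP^1\times\PP^1$. Under the identification of sections of $\O(n,n)$ with curves in $\Sym^2\Gamma$, the vanishing of $B_Q$ on $\{s_0\}\times\PP^1\cup\PP^1\times\{s_0\}$ (the symmetric closure) corresponds exactly to $\alpha_\Gamma(Q)$, after removing the diagonal factor, containing the image of that line, which is precisely $\sigma_{x}$ (the tangent line to $\Delta$ at $x$, per the identifications recalled before the Proposition). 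One has to be a little careful about the diagonal: $B_Q(s_0,t)$ divisible by $(s-t)$ times an $\O(n-1)$ factor vanishing at $t=s_0$ — but the singular direction along $\Gamma$ is accounted for correctly because $A\gamma(s_0)=0$ makes $B_Q(s_0,\cdot)$ vanish to the right order. So $x\in\Sing Q$ iff $\sigma_x\subseteq\alpha_\Gamma(Q)$.

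The main obstacle is the bookkeeping around $\Exc(p)$ and the diagonal $\Delta$: one must check that the set-theoretic image $q(p^{-1}(Q\cap\Sec\Gamma))$ really is the degree $n-2$ curve cut out by $B_Q/(\text{diagonal})$ and not something with extra embedded or excess components coming from the contracted locus $\Gamma\times\Gamma$, and one must match multiplicities so that $\alpha_\Gamma$ lands in the right linear system and is genuinely linear (not just a set map). I expect this to be handled by a direct local computation in the $t$-coordinate on $\Gamma$ together with the explicit description of $q$ as a $\PP^1$-bundle; once that is pinned down, parts (i) and (ii) follow from the dimension count and the rank criterion for singularity as sketched above.
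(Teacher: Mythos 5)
Your argument is correct in substance, but it is a genuinely different route from the paper's. The paper works synthetically: it first invokes the Catalano-Johnson result that $\Sec\Gamma$ is not contained in any quadric, then observes that $Q\cap\Sec\Gamma$ must be a union of bisecants (three collinear points of $Q$ force the whole line), computes the degree by intersecting $Q$ with a rational normal scroll of degree $n-1$ swept out by a $g^1_2$ on $\Gamma$ (residual of degree $n-2$), and concludes by the dimension count together with injectivity ``by construction.'' For (ii), it uses the geometric characterization that a quadric is singular at $x$ exactly when it contains every secant through $x$. Your proof instead works in coordinates with the bilinear form $B_Q(s,t)=\gamma(s)^{T}A\gamma(t)$ and reads everything off $B_Q$: well-definedness and the degree come from factoring out the diagonal, linearity is manifest, injectivity follows from nondegeneracy of $\Gamma$ forcing $A=0$, and (ii) is the rank/kernel criterion $A\gamma(s_0)=0$. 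The trade-off: your route avoids citing the $\Sec\Gamma$ result and makes injectivity and linearity transparent, at the cost of the diagonal-factor bookkeeping, which you flag but do not fully nail down. Two places need tightening: on $\PP^1\times\PP^1$ the diagonal is a $(1,1)$ divisor, so for a symmetric $B_Q$ vanishing there you must argue it vanishes to order two (which follows by symmetry, or equivalently that the image curve in $\PP^2$ contains the branch conic $\Delta$ once), giving the residual of bidegree $(n-2,n-2)$; and for (ii) one should note explicitly that if the degree $n-2$ residual $C$ contains $\sigma_{x}$ then $B_Q=\Delta^2 C$ vanishes on $\{s_0\}\times\PP^1$ and conversely, since $\Delta$ meets that fiber in just the one point $(s_0,s_0)$ so no cancellation can occur. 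With those two points cleaned up, your proof is complete and correct.
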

\begin{proof}
We first show that the map $\alpha_\Gamma$ is well-defined. By \cite{Ca} $\Sec\Gamma$ is not contained in any quadric. \color{black}For any $Q\in |\I_{\Gamma/\PP^n}(2)|$, the intersection $Q\cap \Sec\Gamma$ is union of bisecant lines to $\Gamma$. Indeed, for any $z\in (Q\cap \Sec \Gamma)\setminus \Gamma$ there exists $x+y\in \Sym^2\Gamma$ such that $z\in \langle x,y\rangle$; the quadric $Q$ contains $x,y,z$ and thus the whole line $\langle x,y\rangle$. This shows that $\alpha_\Gamma (Q)=q(p^{-1}(Q\cap\Sec \Gamma))$ is indeed a curve in $\Sym^2(\Gamma)=\PP^2$. Its degree is $n-2$ because, by degree reasons, $Q$ intersects the rational normal scroll of degree $n-1$ in $\PP^n$ spanned by a $g^1_2$ on $\Gamma$ along $\Gamma$ and $n-2$ bisecant lines. Since $|\I_{\Gamma/\PP^n}(2)|$ and $|\mathcal O_{\PP^2}(n-2)|$ are projective spaces of the same dimension and $\alpha_\Gamma$ is injective by construction, it is an isomorphism. 

As concerns (ii), a quadric $Q$ is singular at a point $x\in \Gamma$ if and only if it contains all the secant lines to $\Gamma$ passing through $x$; this is equivalent to requiring that $\alpha(Q)$ contains $\sigma _x$.
\end{proof}

\subsection{Quadrics containing an elliptic normal curve}

One may perform the same construction starting with a degree $n+1$ normal elliptic curve $J\subset \PP^n$ and with its secant variety $\Sec J$, which has degree ${n\choose 2} -1$. The incidence variety $I_J\subset \Sec J\times \Sym^2J$ is defined as before and possesses two natural projections, that we still denote by $p$ and $q$.

It is useful to recall  that $\Sym^2(J)$ is the only elliptic ruled surface with invariant $e=-1$, cf., e.g. \cite{CaCi}. The natural $\PP^1$-bundle structure is given by the Albanese map $\pi: \Sym^2 J \to J$ sending  $x+y$ to $x\+ y$, where we let $\+$ (and $\ominus$) denote the group operation on $J$. Let $P_0$ be the neutral element. For each $P \in J$ the fiber 
\[ f_P:=\pi^{-1}(P)=\{ x+y \in \Sym^2(J) \; | \; x\+ y=P\}\]
is the $\PP^1$ defined by the linear system $|P+P_0|$. We denote the algebraic equivalence class of the fibers by $f$. 
For each $P \in J$, we define the curve $\sigma_P$
as the image of the section $J \to \Sym^2(J)$ mapping $Q$ to $P+ (Q \ominus P)$.
We let $\sigma$ denote the algebraic equivalence class of these sections, which are the ones with minimal self-intersection. We note that the diagonal $\Delta$ satisfies $\Delta \equiv 4\sigma-2f\equiv -2K_{\Sym^2(J)}$.
\begin{prop}\label{ellittica}
\begin{itemize}
\item[(i)] There exists an isomorphism 
$$
\alpha_J: |\I_{J/\PP^n}(2)|\longrightarrow |M|, \textrm{  with  }M\equiv (n-3)\sigma+2f,
$$ 
defined by setting $\alpha_J(Q):=q(p^{-1}(Q\cap\Sec J))$ for any $Q\in |\I_{J/\PP^n}(2)|$.
\item[(ii)] A quadric $Q\in |\I_{J/\PP^n}(2)|$ is singular at $x\in J$ if and only if $\alpha_J(Q)$ contains $\sigma _x$.
\end{itemize}
\end{prop}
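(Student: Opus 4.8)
The plan is to mirror the proof of Proposition \ref{razionale}, replacing $\PP^2=\Sym^2(\Gamma)$ with the elliptic ruled surface $\Sym^2(J)$ throughout, and to identify the resulting linear system on $\Sym^2(J)$ by a degree computation.

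First I would check that $\alpha_J$ is well-defined exactly as before: for any $Q\in|\I_{J/\PP^n}(2)|$ and any $z\in(Q\cap\Sec J)\setminus J$, pick $x+y\in\Sym^2 J$ with $z\in\langle x,y\rangle$; since $Q$ contains the three collinear points $x,y,z$, it contains the whole secant line, so $Q\cap\Sec J$ is a union of secant lines to $J$ and hence $\alpha_J(Q)=q(p^{-1}(Q\cap\Sec J))$ is a genuine curve in $\Sym^2(J)$. Here one needs that $\Sec J$ is not itself contained in a quadric (otherwise $\alpha_J$ is not defined on all of $|\I_{J/\PP^n}(2)|$); for an elliptic normal curve of degree $n+1\geq 5$ this is classical (its secant variety has degree ${n\choose 2}-1>1$, so a quadric cutting it properly suffices, and one rules out $\Sec J\subset Q$ by the analogous argument to the rational case). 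Then $\alpha_J$ is injective by construction, since $Q$ is recovered as the union of the secant lines parametrized by $q^{-1}(\alpha_J(Q))$ together with their limits along $J$, i.e. as the closure of $p(q^{-1}(\alpha_J(Q)))$.

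Next I would compute the class of $\alpha_J(Q)$ in $\NS(\Sym^2(J))$, which is free of rank $2$ spanned by $\sigma$ and $f$. Write $[\alpha_J(Q)]\equiv a\sigma+bf$. To find $b$, intersect with a fiber $f_P=\pi^{-1}(P)$: the points of $\alpha_J(Q)$ lying on $f_P$ correspond to secant lines $\langle x,y\rangle$ with $x\+y=P$ meeting $Q$, equivalently to the intersection points of $Q$ with the line $\ell_P:=\bigcup_{x\+y=P}\langle x,y\rangle$ spanned by the $g^1_2$ defined by $|P+P_0|$ on $J$; since $\ell_P$ is a line not contained in $Q$, this is $2$ points, so $b=\alpha_J(Q)\cdot f=2$. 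To find $a$, I would intersect with a section $\sigma_x$ of minimal self-intersection, or more efficiently intersect with a degree-$(n-1)$ rational normal scroll $\Sigma$ spanned by a $g^1_2$ on $J$: by degree reasons $Q\cap\Sigma$ is the union of $J$ and $n-1$ secant lines of $J$, and these $n-1$ lines are exactly the secant lines in the ruling of $\Sigma$ — whence $\alpha_J(Q)$ meets the corresponding section of $\Sym^2(J)$ in $n-1$ points; combined with the values $\sigma^2=-1$, $\sigma\cdot f=1$ this pins down $a=n-3$, so $M\equiv(n-3)\sigma+2f$. As a consistency check, one can verify $\dim|M|=h^0(\O_{\Sym^2 J}(M))-1$ equals $\dim|\I_{J/\PP^n}(2)|={n\choose 2}-1$ via Riemann–Roch on the ruled surface (using $K_{\Sym^2 J}\equiv -2\sigma+f$ and $\chi(\O)=0$), which forces $\alpha_J$ to be an isomorphism once it is a bijective morphism between projective spaces of the same dimension.

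Finally, part (ii) is formal and identical to the rational case: a quadric $Q\in|\I_{J/\PP^n}(2)|$ is singular at a point $x\in J$ precisely when it contains every secant line to $J$ through $x$ (its tangent cone at $x$ then contains the span of those lines), and the locus in $\Sym^2(J)$ of pairs $x+y$ with $y\in J$ is exactly $\sigma_x$; so $Q$ is singular at $x$ iff $\sigma_x\subseteq\alpha_J(Q)$. The main obstacle I anticipate is the intersection-number bookkeeping on $\Sym^2(J)$ — getting the coefficient of $\sigma$ right, since $\sigma$ has negative self-intersection and one must be careful about which section class is being used and about the contribution of the contracted locus $\Exc(p)\cong\Gamma\times\Gamma$ (here the analogue with $J$) to the count; aligning the degree computation with the known classes $\Delta\equiv 4\sigma-2f$ and $K_{\Sym^2 J}\equiv-2\sigma+f$ should provide the needed cross-checks.
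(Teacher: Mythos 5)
Your overall strategy — mirror Proposition \ref{razionale}, reducing to a class computation on $\Sym^2(J)$ by intersecting $Q$ with ruled surfaces swept out by secant lines — is exactly the paper's. Part (ii) and the dimension/bijectivity argument are fine. However, the intersection-theory bookkeeping in the heart of the proof is wrong in several places, and although you assert the correct final class $(n-3)\sigma+2f$, the computation you give does not produce it.

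Concretely: (1) The set $\ell_P:=\bigcup_{x\+y=P}\langle x,y\rangle$ is \emph{not} a line; it is the rational normal scroll of degree $n-1$ spanned by the $g^1_2$ associated to $P$. Intersecting $Q$ with it yields $J$ (degree $n+1$) plus some ruling lines, and the count is $2(n-1)-(n+1)=n-3$, not $2$ and not $n-1$. (2) This scroll corresponds to a \emph{fiber} $f_P$ of $\pi:\Sym^2 J\to J$, not to a section $\sigma$, so it computes $M\cdot f$ (which is the coefficient $a$ in your notation $M\equiv a\sigma+bf$, since $\sigma\cdot f=1$, $f^2=0$), not $M\cdot\sigma$ and not $b$. (3) Your sign on $\sigma^2$ is wrong: from $K_{\Sym^2 J}\equiv -2\sigma+f$ and adjunction one gets $\sigma^2=+1$ (equivalently $e=-1$ gives $\sigma^2=-e=1$); with your $\sigma^2=-1$ and your claimed values $b=2$, $M\cdot\sigma=n-1$ you would get $a=3-n$, not $n-3$. (4) You never compute $M\cdot\sigma_P$: the missing ingredient, which the paper supplies, is to intersect $Q$ with the degree-$n$ cone over the projection of $J$ from a vertex $P\in J$ — that cone is the union of secant lines through $P$ (the locus parametrized by $\sigma_P$), and $Q$ meets it in $J$ plus $2n-(n+1)=n-1$ lines, giving $M\cdot\sigma=n-1$. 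Combined with $M\cdot f=n-3$ from the scroll and $\sigma^2=1$, $\sigma\cdot f=1$, $f^2=0$ one correctly gets $M\equiv(n-3)\sigma+2f$. The cone-vs-scroll and fiber-vs-section identifications are the crux, and they are inverted in your write-up.
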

\begin{proof}
For any $Q\in |\I_{J/\PP^n}(2)|$ one shows as in the proof of Proposition \ref{razionale} that $\alpha_J(Q)=q(p^{-1}(Q\cap\Sec J))$ is a curve $M\subset\Sym^2(J)$. By construction, the linear class of $M$ in $\Sym^2(J)$ does not depend on $Q$. In order to compute its numerical class, we first consider the degree $n$ cone $C(J)\subset \Sec J$ with vertex $P$ over the projection from $P$ of $J$; for degree reasons $C(J)$ intersects $Q$ along $J$ and $n-1$ bisecant lines, and thus $M\cdot \sigma_P=n-1$. Similarly, \color{black} $M\cdot f=n-3$ because the intersection of $Q$ with the rational normal scroll of degree $n-1$ in $\PP^n$ spanned by a $g^1_2$ on $J$ consists of the union of $J$ with $n-3$ bisecant lines.  Hence, $M\equiv (n-3)\sigma+2f$ and $h^0(M)=(n^2-n-2)/2=h^0(\I_{J/\PP^n}(2))$. The map $\alpha_J$ is thus an isomorphism. The proof of (ii) proceeds as the one of Proposition \ref{razionale}.
\end{proof}

\section{The case $\R_{6,2}$}

In this section we will prove the following theorem.

\begin{thm}\label{seidue}
The moduli spaces $\F_{23}^{\n,ns}$ and $\P_{6}$ are rational and the image of $r_{6,2}$ is a unirational divisor in $\R_{6,2}$.
\end{thm}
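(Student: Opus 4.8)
The strategy is to analyze the geometry of a general non-standard Nikulin surface of genus $h=23$ directly via the projective models $\varphi_R$ and $\varphi_{R'}$, using Proposition \ref{razionale} (and possibly Proposition \ref{ellittica}) as the algebraic engine, and then to descend the rationality statements along the diagram \eqref{modulimap}. Since $h=23\equiv 3 \pmod 4$, we have $g=g(R)=(h+1)/4=6$ and $g'=g(R')=(h-3)/4=5$. Thus $|R|$ is a genus $6$ system and $|R'|$ a genus $5$ system on $S$, and on a general $C\in|R|$ the pair $(C,x_1+x_2,M^\vee|_C)$ gives the point of $\R_{6,2}$ we want to parametrize; recall $\varphi_{R'}|_C$ is the map given by $|\omega_C\otimes M|$, of degree $2g-2+1=11$ onto a curve of that degree in $\PP(H^0(R')^\vee)=\PP^4$ (since $h^0(R')=g'+1=6$... careful: $h^0(R')=6$, so the ambient is $\PP^5$; I will recompute the Clifford-type numerics in the write-up). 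The core of the construction, as announced in the introduction, is to locate inside the linear system $|R'|$ a reducible curve whose distinguished component is a rational normal curve and to show that this rigidifies the whole configuration.

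Concretely, I would proceed in the following steps. First, fix a general Nikulin surface $(S,M,H)\in\F_{23}^{\n,ns}$ and study $\overline{S'}=\varphi_{R'}(S)\subset\PP^5$: it is a $K3$ surface with six nodes (images of the six curves $N_i$ with $N_i\cdot R'=0$, recalling $R'\sim(H-N_3-\cdots-N_8)/2$ so exactly $N_3,\dots,N_8$ are contracted and $N_1,N_2$ go to lines), and a general $C\in|R|$ maps to a smooth curve of genus $6$ and degree $R\cdot R'=?$ — this I compute from the Nikulin lattice relations, $H^2=2h-2=44$, $M^2=-4$, $R^2=2g-2=10$, $R\cdot R'=?$; the answer should land on the numbers making $|R|$ cut out a nice family on $\overline{S'}$. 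Second, I would identify the distinguished reducible curve in $|R'|$: because $R'\sim(H-N_3-\cdots-N_8)/2$ and $R\sim(H-N_1-N_2)/2$, one has $R+R'\sim H-M$ and $R-R'\sim (N_3+\cdots+N_8-N_1-N_2)/2$, so a suitable combination of the lines $N_i$ together with a residual curve of low genus lies in $|R'|$; the claim to verify is that the residual component is a rational normal curve $\Gamma\subset\PP^{?}$ in an appropriate span, to which Proposition \ref{razionale} applies. Third — the heart — I would parametrize $\P_6$ (equivalently $\F_{23}^{\n,ns}$): using $\alpha_\Gamma$, the data of the nodal $K3$ $\overline{S'}$ containing $\Gamma$ becomes the data of a plane curve of degree $n-2$ (with $n$ determined by $\Gamma$) with prescribed tangency/singularity conditions encoding the six nodes and the two lines, i.e. a linear-algebra / quadrics count that exhibits the parameter space as an open subset of a product of projective spaces and Grassmannians, hence rational. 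Finally, I would run the forgetful and Abel–Prym maps: rationality of $\P_6\to\F_{23}^{\n,ns}$ (a projective bundle, or at least a bundle with rational fibers) gives rationality of both, and then one must check that $r_{6,2}:\P_6\dashrightarrow\R_{6,2}$ has image a divisor (dimension count: $\dim\P_6 = 11+6 = 17 = \dim\R_{6,2}-1 = (3\cdot6-3+2)-1$) and is generically finite onto that divisor, so the divisor is unirational (indeed dominated by the rational $\P_6$).

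The main obstacle is the third step: showing that the moduli of pairs (nodal non-standard Nikulin $K3$ in $\PP^5$, the rational normal curve $\Gamma$ in the complementary system) is rational, rather than merely unirational. Unirationality would follow cheaply from a parameter count once one knows that a general $\Gamma$ of the right degree, together with a generic choice of the $8$ ($-2$)-curves meeting it correctly, spans a Nikulin configuration; but rationality requires that the fibration over the space of $\Gamma$'s (a single orbit under $\PGL$, hence rational) have rational fibers, and that the $\PGL$-action and the choice of the eight nodal/line data can be rigidified — this is exactly where $\alpha_\Gamma$ converting everything to plane curves of degree $n-2$ with assigned $\sigma_{x_i}$-tangencies is used to linearize the problem. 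A secondary subtlety is verifying genericity: that the Nikulin surface so produced is general in $\F_{23}^{\n,ns}$ (Picard number $9$, lattice exactly $\ZZ[R]\oplus\mathbf N$), which one handles by exhibiting one explicit example with the correct Picard lattice and invoking semicontinuity, together with the known irreducibility of $\F_{23}^{\n,ns}$. Once rationality of $\F_{23}^{\n,ns}$ and $\P_6$ is in hand, the divisor statement in $\R_{6,2}$ is a routine consequence of the dimension bookkeeping above and dominance of $r_{6,2}$ onto its image.
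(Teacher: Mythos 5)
Your high-level strategy (exhibit a rational normal curve $\Gamma$ in the complementary linear system, transport the quadric data through the isomorphism $\alpha_\Gamma$ of Proposition~\ref{razionale} to plane curves, and then rigidify the $\PGL$-action) is indeed the paper's strategy. But the proposal contains several concrete errors that would derail the argument.

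First, you have reversed the roles of $N_1,N_2$ and $N_3,\dots,N_8$ under $\varphi_{R'}$. Since $R'\sim\tfrac12(H-N_3-\cdots-N_8)$, one has $R'\cdot N_i=-\tfrac12 N_i^2=1$ for $i\in\{3,\dots,8\}$ and $R'\cdot N_i=0$ for $i\in\{1,2\}$. So $\varphi_{R'}$ contracts $N_1,N_2$ to \emph{two} nodes $x_1,x_2$ and sends $N_3,\dots,N_8$ to \emph{six} lines in $\PP^5$, not the other way around. This is not cosmetic: the paper's parametrization is built precisely on the two nodes $x_1,x_2$ determining two tangent lines $\sigma_{x_1},\sigma_{x_2}$ in $\Sym^2\Gamma\cong\PP^2$ and the six bisecant lines $N_3,\dots,N_8$ determining six points $n_3,\dots,n_8\in\PP^2$, so that a Nikulin surface is encoded by a pair $(\mathfrak c, x_1+x_2)$ (a conic through $n_3,\dots,n_8$ and two tangent lines) plus a net of cubics. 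If you swap nodes and lines you get the wrong degrees everywhere and the argument collapses.

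Second, your dimension bookkeeping at the end is wrong and masks the real content of the divisor statement. You write $\dim\P_6=11+6=17=\dim\R_{6,2}-1=(3\cdot 6-3+2)-1$; but $3\cdot 6-3+2=17$, so $\dim\R_{6,2}=17=\dim\P_6$, and your chain of equalities asserts $17=16$. Because the source and target have the \emph{same} dimension, a naive count would predict that $r_{6,2}$ is dominant; that it is \emph{not} is the genuine geometric content here. The paper shows instead that the general fiber of $r_{6,2}$ is $1$-dimensional, by proving that for a general $(S,M,H,C)\in\P_6$ the space $|\I_{C/\PP^5}(2)|$ is a $\PP^3$ with base locus $C\cup\Gamma$, and that there is a $\PP^1$ of nets inside it (containing the pencil generated by the two cone quadrics $Q_1,Q_2$ singular at $x_1,x_2$) each cutting out a Nikulin surface. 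So $r_{6,2}$ is definitely \emph{not} generically finite onto its image; it has positive-dimensional fibers, and it is exactly this that makes the image a divisor rather than the whole space. Your claim of generic finiteness would, if true, prove the stronger (and false, by Bud's work) statement that $\R_{6,2}$ is unirational.

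Third, you leave the actual rationality of the base unaddressed beyond saying it ``can be rigidified''. The paper has to establish rationality, not just unirationality, and this is where the real work lies: after descending the $\PP^7$-bundle of Schubert cells $Z(\mathfrak c,x_1+x_2)$ via Kempf's descent lemma, it identifies $U/G$ (with $G\cong\PGL(2)$ the stabilizer of the conic $\Delta$) birationally with a $\PP^1$-bundle over $\bigl(\Sym^4(\Delta)\times\Sym^2(\Delta)\bigr)/\Aut(\Delta)$, and then identifies the latter with $\R_2$, which is rational by Dolgachev. Without a substitute for this identification, your outline only yields unirationality of $\F_{23}^{\n,ns}$, not rationality.
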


Take a general $(S,M,H)$ in $\F_{23}^{\n,ns}$. Then $g(R)=6$, $g(R')=5$ and $|R'|$ defines a morphism
$$\varphi_{R'}:S\longrightarrow \overline{S'}\subset \PP^5,$$
that contracts the curves $N_1,N_2$ to $2$ double points $x_1,x_2\in \overline{S'}$ and maps $N_3,\ldots,N_8$ to $6$ lines in  $\PP^5$. 

\begin{lemma} \label{lemma:Juve-15}
The divisor $
\Gamma:=R-N_3-\cdots-N_8
$
is represented by an irreducible curve satisfying
\[ \Gamma^2=-2, \;\; \Gamma\cdot R'=5 \;\; \mbox{and} \;\; \Gamma \cdot N_i =
\begin{cases}
  1, & i=1,2, \\
  2, & i=3,\ldots,8.
\end{cases}
\]
Furthermore, $h^0(R-N_1-\cdots-N_7)=0$. 
\end{lemma}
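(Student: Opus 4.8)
The plan is to work in the rank-9 Picard lattice $\Pic S \cong \ZZ[R] \oplus \mathbf{N}$, using the known intersection numbers $R^2 = 2g(R)-2 = 10$, $R\cdot M = 0$, $N_i^2 = -2$, $N_i\cdot N_j = 0$ for $i\neq j$, $M\cdot N_i = 0$ and $M^2 = -4$ (from $2M \sim N_1 + \cdots + N_8$, so $4M^2 = \sum N_i^2 = -16$), together with $R\cdot N_i$, which we compute from $R \sim (H - N_1 - N_2)/2$ when $h \equiv 3 \bmod 4$: one gets $R\cdot N_1 = R\cdot N_2 = 1$ and $R\cdot N_i = 0$ for $i = 3,\ldots,8$, while $R\cdot R' = 5$ follows from $R + R' \sim H - M$ and direct expansion (or equivalently $g(R) + g(R') $ data). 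With these in hand the intersection numbers in the statement are pure bookkeeping: $\Gamma^2 = (R - N_3 - \cdots - N_8)^2 = R^2 - 2\sum_{i\geq 3} R\cdot N_i + \sum_{i\geq 3} N_i^2 = 10 - 0 - 12 = -2$; $\Gamma\cdot N_1 = R\cdot N_1 = 1$ and likewise for $N_2$; $\Gamma\cdot N_i = R\cdot N_i - N_i^2 = 0 + 2 = 2$ for $i = 3,\ldots,8$; and $\Gamma\cdot R' = R\cdot R' - \sum_{i\geq 3} N_i\cdot R'$, where $N_i \cdot R' = 1$ for $i = 3,\ldots,8$ (these $N_i$ map to lines under $\varphi_{R'}$) — wait, this would give $5 - 6 = -1$, so instead I will use $R'\cdot N_i = 0$ for $i=3,\ldots,8$ if those are the ones contracted; I must first pin down from the convention $R' \sim (H - N_3 - \cdots - N_8)/2$ that $R'\cdot N_i = 1$ for $i=1,2$ and $R'\cdot N_i = 0$ for $i=3,\ldots,8$, which then gives $\Gamma\cdot R' = 5 - 0 = 5$. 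I will state these restriction numbers cleanly at the outset to avoid sign confusion.

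\emph{Irreducibility and effectivity of $\Gamma$.} Since $\Gamma^2 = -2$ and $\chi(\O_S(\Gamma)) = 2 + \tfrac12\Gamma^2 = 1$, Riemann-Roch gives $h^0(\Gamma) + h^0(-\Gamma) \geq 1$. To rule out $-\Gamma$ effective I will compute $\Gamma\cdot R' = 5 > 0$, or better $\Gamma\cdot H = R\cdot H > 0$ (since $H\cdot N_i = 0$), which forces $h^0(-\Gamma) = 0$; hence $h^0(\Gamma) \geq 1$ and $\Gamma$ is effective. For irreducibility I will argue that $\Gamma$ is primitive in $\Pic S$ (its coefficient of $R$ is $1$) and that, for a very general such Nikulin surface, an effective $(-2)$-class with $\Gamma\cdot H > 0$ and $\Gamma\cdot N_i \geq 0$ for all $i$ cannot decompose: any decomposition $\Gamma = A + B$ into effective classes would, after intersecting with the nef class $H$ (and using that $H$ is the polarization, so $H\cdot A, H\cdot B \geq 0$ with at most one of them zero — the zero one being supported on the $N_i$), reduce to the possible presence of $(-2)$-curves among the $N_i$; a short case analysis against the values $\Gamma\cdot N_i$ (which are all $\geq 0$, and equal to $R\cdot N_i - (\text{multiplicity in }\Gamma)\cdot N_i^2$) shows no such splitting off is numerically consistent. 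Concretely, subtracting any $N_i$ from $\Gamma$ changes the self-intersection and the $N_i$-intersections in a way incompatible with effectivity; I will spell this out for one representative $i$ in each of the two groups.

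\emph{The vanishing $h^0(R - N_1 - \cdots - N_7) = 0$.} Set $D := R - N_1 - \cdots - N_7$. I will compute $D^2 = R^2 - 2\sum_{i=1}^{7} R\cdot N_i + \sum_{i=1}^7 N_i^2 = 10 - 2(1+1) - 14 = -8$ and $D\cdot H = R\cdot H = (R^2 - R\cdot M) \cdot(\ldots)$ — more simply $D\cdot H = R\cdot H > 0$ since $N_i\cdot H = 0$, so $D$ is not anti-effective. Then $D\cdot N_8 = R\cdot N_8 = 0$ while $D\cdot N_i = R\cdot N_i - N_i^2 = 1 + 2 = 3$ for $i=1,2$ and $= 0 + 2 = 2$ for $i = 3,\ldots,7$. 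Now $D\cdot \Gamma' $ for $\Gamma' := R - N_3-\cdots-N_8$ gives $D\cdot\Gamma' = R^2 - \sum\ldots$; I will instead observe that if $D$ were effective, then since $\Gamma = D + N_8$ has $h^0(\Gamma) = 1$ (it suffices to show $h^0(\Gamma)=1$, which follows from $\Gamma$ irreducible with $\Gamma^2 = -2$), $D$ effective would force the unique curve in $|\Gamma|$ to contain $N_8$ as a component, i.e. $\Gamma - N_8 = D$ effective with $D\cdot N_8 = 0$; but $(D)\cdot N_8 = 0$ is consistent, so this alone does not conclude. The cleaner route: compute $D\cdot R' = R\cdot R' - \sum_{i=1}^7 N_i\cdot R' = 5 - (1 + 1 + 0 + 0 + 0 + 0 + 0) = 3 \geq 0$, and $D\cdot M = R\cdot M - \tfrac12\sum_{i=1}^7 N_i\cdot(\sum_j N_j) = 0 - \tfrac12\sum_{i=1}^7 N_i^2 = 0 - \tfrac12(-14) = 7 > 0$, so $D$ intersects the nef-ish directions positively yet $D^2 = -8$; I then invoke that on a K3 any effective divisor is a sum of irreducible curves, each of self-intersection $\geq -2$, and run the same primitivity/no-splitting analysis: $D$ is primitive, $D\cdot N_8 = 0$ forces $N_8 \not\leq D$ while all other $N_i\cdot D > 0$; writing $D = aR + (\text{combination of } N_i)$ with $a = 1$ and using that the moving part must be a positive multiple of a curve class with $H\cdot(\cdot) = R\cdot H$, one checks the arithmetic genus constraint $p_a \geq 0$ fails, giving the contradiction.

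\textbf{Main obstacle.} The purely numerical identities are routine; the genuine work is the irreducibility of $\Gamma$ and the vanishing $h^0(D) = 0$, both of which require ruling out that these effective-looking $(-2)$- and $(-8)$-classes decompose or acquire unexpected sections for the \emph{very general} non-standard Nikulin surface. I expect the paper handles this exactly as is standard in the Nikulin-surface literature (e.g. as in \cite{GS}, \cite{KLV2}): exploit that $\Pic S$ is generated by $R, N_1,\ldots,N_8$ with the explicit lattice, enumerate the finitely many classes $E$ with $E^2 = -2$, $E\cdot H$ small and $0 \leq E \leq \Gamma$ (resp. the obstructing sub-classes of $D$), and check none is effective — the finiteness coming from the signature of the lattice and boundedness of $H$-degree. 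That enumeration, rather than any conceptual difficulty, is where care is needed.
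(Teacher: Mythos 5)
Your overall strategy matches the paper's: verify the intersection numbers by direct computation in $\Pic S = \ZZ[R]\oplus\mathbf{N}$, get effectivity from $\Gamma\cdot R' > 0$ together with $\Gamma^2=-2$ and Riemann--Roch, and prove irreducibility and the vanishing by decomposing into effective classes, isolating the part supported on the $N_i$, and ruling it out via the genus bound for irreducible curves. That is precisely what the paper does.

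That said, your intersection bookkeeping has errors that happen to cancel in the one spot you check but are wrong individually. From $R\sim\tfrac{1}{2}(H-N_1-N_2)$ and $R'\sim\tfrac{1}{2}(H-N_3-\cdots-N_8)$ one gets $R'\cdot N_i=0$ for $i=1,2$ (the curves $\varphi_{R'}$ contracts to the nodes) and $R'\cdot N_i=1$ for $i=3,\ldots,8$ (the ones mapped to lines) — the opposite of what you settled on. Moreover $R\cdot R'=\tfrac{1}{4}H^2=11$, not $5$; your two wrong values produce the correct $\Gamma\cdot R'=5$ by coincidence, but for $D:=R-N_1-\cdots-N_7$ they give $D\cdot R'=3$ where the true value is $11-(0+0+1+1+1+1+1)=6$. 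Also $R\cdot M=1$, not $0$, since $M\cdot N_i=-1$. None of these errors changes the truth of the lemma, but they would propagate into the rest of the section.

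For the irreducibility and $h^0(D)=0$ claims, the plan you describe is the right one but is never actually executed, and the intermediate identity ``$\Gamma=D+N_8$'' is false (one has $\Gamma-D=N_1+N_2-N_8$). The clean way to carry out your plan, and the one the paper uses, is: for any effective nontrivial decomposition $\Gamma=D+E$ write $D\sim aR+\sum_{i}\tfrac{b_i}{2}N_i$; then $D\cdot H\geq 0$ and $E\cdot H\geq 0$ force $a\in\{0,1\}$, so WLOG $a=0$, whence $D$ is a nonnegative integral combination of the $N_i$ (being effective of $H$-degree $0$). Hence $\Gamma\sim\Gamma_0+\sum\gamma_iN_i$ with $\Gamma_0$ irreducible and $\gamma_i\geq 0$, and
\[ \Gamma_0^2 = -2-2\sum_{i}\gamma_i^2 - 2\bigl(\gamma_1+\gamma_2+2\gamma_3+\cdots+2\gamma_8\bigr)\geq-2 \]
forces all $\gamma_i=0$. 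The vanishing $h^0(R-N_1-\cdots-N_7)=0$ follows from the same computation applied to that class; your alternative routes through $D\cdot M$ and ``arithmetic genus fails'' are not carried far enough to count as a proof.
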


\begin{proof}
  The intersections are easily verified. Since $\Gamma \cdot R'=5$, the divisor $\Gamma$ is effective. 
As the irreducibility of $\Gamma$ is an open condition in $\F_{23}^{\n,ns}$, we may assume that \eqref{eq:pic9} holds.

 Let $\Gamma=D+E$ be an effective nontrivial decomposition. Write
  $D \sim aR+\sum_{i=1}^8 \frac{b_i}{2}N_i$ for some integers $a,b_i$ with $a \geq 0$ (as $D$ is effective). Then $E \sim (1-a)R-\frac{b_1}{2}N_1-\frac{b_2}{2}N_2 -\sum_{i=3}^8 \left(1+\frac{b_i}{2}\right)N_i$, which implies $a \leq 1$. Hence, $a \in \{0,1\}$ and, by symmetry, we may assume $a=0$, so that $D \sim \sum_{i=1}^8 \frac{b_i}{2}N_i$. It follows that all $b_i$ are nonnegative and even. We have thus proved that in any effective decomposition of $\Gamma$, all components but possibly one are supported on the $N_i$s. Thus we may write $\Gamma \sim \sum_{i=1}^8 \gamma_iN_i+\Gamma_0$, with $\gamma_i$ nonnegative integers and $\Gamma_0$ effective and irreducible. Then
  \[ -2 \leq \Gamma_0^2=\left(\Gamma -\sum_{i=1}^8 \gamma_iN_i\right)^2=
    -2-2\sum_{i=1}^8 \gamma_i^2-2(\gamma_1+\gamma_2+2\gamma_3+\cdots+2\gamma_8), \]
  implying $\gamma_i=0$ for all $i$.  The last statement is proved similarly. 
\end{proof}

Thus, $\varphi_{R'}$ maps $\Gamma$ to a rational normal curve $\Gamma\subset \PP^5$ that passes through the nodes $x_1,x_2\in \overline{S'}$ and has $N_3,\ldots,N_8$ as bisecant lines. 

Since $R'$ is not trigonal by \cite[Prop. 5.6]{DL}, the surface $\overline{S'}\subset \PP^5$ is the base locus of a net $\Sigma$ of quadrics. By projecting $\overline{S'}$ from the node $x_1$, we obtain a projective model $S^\dagger$ of $S$ in $\mathbb P^4$. Proceeding as in \cite[Prop. 5.6]{DL}, one can easily check that $R'-N_1$ is not hyperelliptic and thus $S^\dagger$ is the intersection of a cubic and a unique quadric $Q_1'$. The cone over $Q_1'$ with vertex in $x_1$ is a quadric $Q_1\subset \PP^5$ containing $\overline{S'}$, that is, $Q_1\in \Sigma$. Analogously, one proves the existence of a unique quadric $Q_2\in \Sigma$ singular at the point $x_2$. 
We may thus write
$$
\overline{S'}=Q_1\cap Q_2\cap Q_3,
$$
where $Q_3\in \Sigma$ is smooth. Since $Q_1,Q_2,Q_3$ contain $\Gamma$, Proposition \ref{razionale} provides three plane cubics $\alpha_\Gamma(Q_i)$  such that $\alpha_\Gamma(Q_1)=\sigma_{x_1}+\mathfrak{c}_1$ and $\alpha_\Gamma(Q_2)=\sigma_{x_2}+\mathfrak{c}_2$ for some conics $\mathfrak{c}_1,\mathfrak{c}_2\subset \PP^2$. The six bisecant lines $N_3,\ldots,N_8$ to $\Gamma$  are contained in $Q_1,Q_2,Q_3$ and thus define $6$ points $n_3,\ldots,n_8\in\alpha_\Gamma(Q_1)\cap \alpha_\Gamma(Q_2)\cap\alpha_\Gamma(Q_3)\subset\Sym^2(\Gamma)=\PP^2$ . Since for any $3\leq j\leq 8$ we have $N_j\cdot N_1=N_j\cdot N_2=0$  on $S$, neither $\sigma_{x_1}$ nor $\sigma_{x_2}$ passes through the points $n_j$. Therefore,  $n_3,\ldots,n_8\in  \mathfrak{c}_1\cap \mathfrak{c}_2$, and this implies $\mathfrak{c}_1\cap \alpha_{\Gamma}(Q_3)=\{ n_3,\ldots,n_8   \}$ and $\mathfrak{c}_2=\mathfrak{c}_1$, unless possibly when $\mathfrak{c}_1$ and $\mathfrak{c}_2$ are both reducible and share a common line $\l$ passing through $5$ of  the $n_i$ (without loss of generality, $n_3,\ldots,n_7$). However, if this were the case, there would exist a $2$-dimensional family of quadrics $Q_t$ in $\PP^5$ containing $\Gamma+N_3+\cdots+N_7$ and singular at $x_1, x_2$ (that correspond to the plane curves $\sigma_{x_1}+\l+\l_t$ for a varying line $l_t$). Any such $Q_t$ different from the uniquely determined $Q_1$ would cut out on $\overline{S'}$ a divisor whose strict transform in $S$ would lie in the linear system 
\begin{equation*}
|2R'-\Gamma- 2N_1-2N_2-N_3-\cdots-N_7|=|R-N_1-\cdots-N_7|,
\end{equation*}
which is empty  by Lemma \ref{lemma:Juve-15}. We thus conclude that $\mathfrak{c}_1$ is irreducible and $\mathfrak{c}_2=\mathfrak{c}_1$.

Vice versa, using the notation of \S \ref{federico} we prove the following: 

\begin{prop}\label{ita}
Let $Q_1,Q_2,Q_3\subset \PP^5$ be quadrics containing a fixed rational normal curve $\Gamma\subset\PP^5$. Assume that $\alpha_\Gamma(Q_1)=\sigma_{x_1}+\mathfrak{c}$, $\alpha_\Gamma(Q_2)=\sigma_{x_2}+\mathfrak{c}$, $\alpha_\Gamma(Q_3)=\mathfrak{e}$, where $\mathfrak{c}\in |\mathcal O_{\PP^2}(2)|$ and $\mathfrak{e}\in |\mathcal O_{\PP^2}(3)|$ intersect at $6$ distinct general points, and $x_1\neq x_2$. Then for general such $Q_1,Q_2,Q_3$ the following hold:
\begin{itemize}
\item[(i)] The minimal desingularization $S$ of $\overline{S'}:= Q_1\cap Q_2\cap Q_3\subset \PP^5$ is a non-standard Nikulin surface of genus $23$.
\item[(ii)] By varying $x_1,x_2\in \Gamma$, $\mathfrak{c}\in |\mathcal O_{\PP^2}(2)|$ and $\mathfrak{e}\in |\mathcal O_{\PP^2}(3)|$, one obtains all Nikulin surfaces in a dense open subset of $\F_{23}^{\n,ns}$.
\end{itemize}
\end{prop}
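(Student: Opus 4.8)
The plan is to run the construction of the previous pages in reverse, checking at each stage that the generic choice of data produces a surface with exactly the Picard lattice of a non-standard Nikulin surface of genus $23$, and then to count parameters to get dominance. First I would set up the target: for general $x_1,x_2\in\Gamma$ and general $\mathfrak c\in|\mathcal O_{\PP^2}(2)|$, $\mathfrak e\in|\mathcal O_{\PP^2}(3)|$ meeting in $6$ distinct points $n_3,\dots,n_8$, use the isomorphism $\alpha_\Gamma$ of Proposition \ref{razionale} to recover quadrics $Q_1,Q_2,Q_3\subset\PP^5$ through $\Gamma$, with $Q_1$ singular exactly at $x_1$, $Q_2$ singular exactly at $x_2$ (by Proposition \ref{razionale}(ii), since $\mathfrak c$ general does not contain any $\sigma_x$), $Q_3$ smooth, and all three containing the six bisecant lines $N_3,\dots,N_8$ corresponding to $n_3,\dots,n_8$. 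Set $\overline{S'}:=Q_1\cap Q_2\cap Q_3$. A dimension-of-intersection argument (three quadrics in $\PP^5$ through a rational normal curve, which is cut out by quadrics, so a complete intersection off $\Gamma$) shows $\overline{S'}$ is a surface of degree $8$; its only singularities for generic data are the two nodes $x_1,x_2$ (the base locus of the pencil $\langle Q_1,Q_2\rangle$ along $\Gamma$ does not create extra singularities generically — this needs a Bertini-type argument relative to the base curve $\Gamma$), so the minimal desingularization $S$ has $N_1,N_2$ as the two exceptional $(-2)$-curves over $x_1,x_2$.

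Next I would identify the $K3$ structure. Adjunction on $\overline{S'}\subset\PP^5$ (degree $8$, sectional genus $5$) together with the fact that the two singular points are nodes gives $K_S=0$ and $S$ a $K3$ surface, with $H:=\varphi_{R'}^*\mathcal O(1)$ of genus $5$... no: rather, the polarization coming from $\PP^5$ has genus $5$ and square $8$, and I must produce the genus-$23$ polarization $H$ and the class $M$. The key is that the six lines $N_3,\dots,N_8$ on $\overline{S'}$ pull back to $(-2)$-curves on $S$; together with $N_1,N_2$ we have eight disjoint $(-2)$-curves (disjointness is forced: $N_i\cdot N_j$ for $i,j\ge3$ is read off from $n_i\neq n_j$ and the geometry of $\Sym^2\Gamma$, while $N_j$ for $j\ge3$ avoids $x_1,x_2$ because $\sigma_{x_1},\sigma_{x_2}$ avoid $n_j$ — this is exactly the computation already made in the excerpt, run backwards). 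One then checks $N_1+\dots+N_8\sim 2M$ for an integral class $M$: this is the only delicate lattice point, and it should follow from the fact that $R':=$ (the pullback of $\mathcal O(1)$ on $\overline{S'}$) together with the $N_i$ and the relation $R\sim R'+N_3+\dots+N_8$, $H\sim 2R'+N_1+N_2$ generate a lattice isometric to the index-$2$ overlattice $\Lambda_{23}$, with $M=\frac12\sum N_i$ integral precisely because of the even-ness built into $R\sim(H-N_1-N_2)/2$. For generic data Picard number $9$ and $\Pic S\cong\ZZ[R]\+\n$ hold by an openness/specialization argument (the locus where Picard number jumps is a countable union of proper subvarieties, and one exhibits one surface in the family with $\rho=9$, e.g. the one coming from a genuine $(S,M,H)\in\F_{23}^{\n,ns}$ produced by the forward construction). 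This gives (i).

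For (ii) I would count parameters: the choices $x_1,x_2\in\Gamma$ (2 parameters), $\mathfrak c\in|\mathcal O(2)|$ (5 parameters), $\mathfrak e\in|\mathcal O(3)|$ (9 parameters) total $16$, modulo $\Aut(\PP^5)$ acting compatibly — but more efficiently, modulo $\PGL_2$ acting on $\Gamma\cong\PP^1$ (3 parameters) which is the residual automorphism group once $\Gamma\subset\PP^5$ is fixed up to $\PGL_6$, giving $16-3=13$; comparing with $\dim\F_{23}^{\n,ns}=11$ plus $\dim|R'|=g'=5$... I must instead land on $\dim\P_6$. Since $\P_6\to\F_{23}^{\n,ns}$ is the forgetful map with fibers $|R|$ of dimension $g=6$, one has $\dim\P_6=17$, and the construction depends on $16$ parameters for $(Q_1,Q_2,Q_3)$ plus the hidden parameters in choosing the curve; the cleanest route is to show directly that the forward map from the previous pages (taking a general $(S,M,H,C)\in\P_6$ to the tuple $(x_1,x_2,\mathfrak c,\mathfrak e)$, equivalently to $(Q_1,Q_2,Q_3)$) and the map just constructed are inverse to each other over dense opens, which the excerpt has essentially already verified in one direction ($\mathfrak c_1=\mathfrak c_2=:\mathfrak c$ irreducible, $\mathfrak e=\alpha_\Gamma(Q_3)$, the six points $n_j$ reconstruct the $N_j$). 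Then dominance is automatic and no delicate dimension count is needed. \textbf{The main obstacle} is the integrality and primitivity of the Nikulin data for generic $(Q_1,Q_2,Q_3)$ — i.e.\ showing $\frac12\sum N_i\in\Pic S$ and $\Pic S\cong\ZZ[R]\+\n$ — since everything else is either a Bertini-type smoothness statement or a bookkeeping of intersection numbers already present in the excerpt; I would handle it by the specialization argument above, using irreducibility of $\F_{23}^{\n,ns}$ to transport the lattice structure from one known member of the family to the generic one.
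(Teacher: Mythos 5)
Your overall strategy for (i) is aligned with the paper's, but there are two substantive issues.

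\textbf{On the integrality of $M=\tfrac12\sum N_i$.} You correctly single this out as the delicate lattice point, and your proposed route (transport the $2$-divisibility from a known member of the family, using that divisibility in $H^2(S,\ZZ)$ is a deformation invariant along a family in which the $N_i$ and $R'$ remain algebraic) is valid. However, the paper disposes of this more directly and without any appeal to specialization: set $R':=\nu^*\O_{\overline{S'}}(1)$ and let $\Gamma$ be the strict transform of the rational normal curve. One computes that the class $2R'-\sum_{i=1}^{8}N_i-2\Gamma$ has square $0$ and intersection $0$ with the ample class $R'$, hence is numerically trivial, hence trivial on the $K3$ surface $S$. This gives $\sum N_i=2(R'-\Gamma)$ at once, and $M:=R'-\Gamma$ is the integral square root. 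This is shorter, self-contained, and does not require exhibiting a known member first. Your circular-looking first attempt (``$M$ is integral precisely because of the even-ness built into $R\sim(H-N_1-N_2)/2$'') should simply be deleted, since $R$ and $H$ are objects you are trying to construct, not assume.

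\textbf{On the parameter count for (ii).} This is where your proposal breaks. You count $x_1,x_2$ ($2$), $\mathfrak c$ ($5$), $\mathfrak e$ ($9$), total $16$, minus $\dim\PGL(2)=3$, arriving at $13$, and then you cannot reconcile $13$ with any relevant dimension (you try $16$ and $17$, neither of which is the target). The target is $\dim\F_{23}^{\n,ns}=11$: the construction yields a \emph{surface}, not a pair (surface, curve), so you should not be aiming for $\dim\P_6$. The discrepancy of $2$ in your count comes from parametrizing by the cubic $\mathfrak e$ rather than by the \emph{net} $\langle\sigma_{x_1}+\mathfrak c,\,\sigma_{x_2}+\mathfrak c,\,\mathfrak e\rangle$: two cubics $\mathfrak e,\mathfrak e'$ give the same surface $\overline{S'}$ precisely when $\mathfrak e'=\mathfrak e+\lambda(\sigma_{x_1}+\mathfrak c)+\mu(\sigma_{x_2}+\mathfrak c)$, so the effective number of parameters contributed by $\mathfrak e$ is $9-2=7$, not $9$. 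Equivalently (and this is what the paper does), one should parametrize by the Schubert variety $Z(\mathfrak c,x_1+x_2)$ of nets in $|\O_{\PP^2}(3)|$ containing the pencil spanned by $\sigma_{x_1}+\mathfrak c$ and $\sigma_{x_2}+\mathfrak c$, which has dimension $7$. Then $2+5+7=14$, minus $3$ for $\Aut(\Gamma)\cong\PGL(2)$, gives exactly $11$. Your alternative ``show the forward and backward constructions are inverse to each other'' is a reasonable idea in spirit and close to what the paper actually uses to show the construction reaches general Nikulin surfaces, but as stated it is only a gesture: you still need either the corrected dimension count or a careful identification of the two maps on dense opens. As written, the parameter count in your proposal is wrong and the fallback is not worked out, so (ii) is not established.

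Minor remark: showing $\rho(S)=9$ and $\Pic S\cong\ZZ[R]\+\mathbf N$ for the generic member is more than is needed for (i); the paper only needs $(S,M,H)$ to satisfy Definition~\ref{def:Nik} and $\Lambda_{23}\subset\Pic S$ to be non-primitive, which follows once $M$ is integral.
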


\begin{proof} 
In the case where the points $x_1,x_2\in \Gamma$ and $n_3,\ldots,n_8 \in \PP^2$ arise starting from a Nikulin surface, there exists a unique quadric in $\PP^5$ singular at $x_1$ and containing $\Gamma, N_3,\ldots,N_8$ because the plane conic $\mathfrak{c}$ through $n_3,\ldots,n_8$ is unique; the same holds for $x_2$. This ensures that the projective model of a general Nikulin surface as above is limit of complete intersections as in (i) where $x_1,x_2\in \Gamma$ and $n_3,\ldots,n_8 \in \PP^2$ are chosen generically. 

A surface $\overline{S'}$ as in (i) has trivial canonical bundle and two double points at $x_1$ and $x_2$. By generality (as this holds true when $\overline{S'}$ is the projective model of a Nikulin surface), $\overline{S'}$ has no further singularity. Denoting by $\nu:S\to \overline{S'}$ the minimal desingularization, the surface $S$ is a $K3$ surface and the curves $N_1=\nu^{-1}(x_1)$ and $N_2=\nu^{-1}(x_2)$ are ($-2$)-curves. Since $Q_1,Q_2,Q_3$ all contain the $6$ bisecant lines to $\Gamma$ corrisponding to the $6$ intersection points $\mathfrak{c}\cap \mathfrak{e}$, the surface $\overline{S'}$ contains $6$ bisecant lines to $\Gamma$, whose inverse images on $S$ we call $N_3,\ldots,N_8$. By construction, the eight ($-2$)-curves $N_i$ on $S$ are all disjoint. We set $R':=\nu^*\mathcal{O}_{\overline{S'}}(1)$, and observe that the divisor
$2R'-\sum_{i=1}^8N_i-2\Gamma$
has self intersection $0$ and degree $0$ with respect to $R'$, and is thus trivial. As a consequence, we obtain that
$$
\sum_{i=1}^8N_i=2R'-2\Gamma
$$
is $2$-divisible in $\Pic(S)$. Setting $M:=(N_1+\cdots + N_8)/2$, $R:=\nu^*\mathcal{O}_{\overline{S'}}(\Gamma+N_3+\cdots+N_8)$ and $H:=R+R'+M$, one verifies that $(S,M,H)\in \F_{23}^{\n,ns}$, thus obtaining (i).

We perform a parameter count to prove (ii). The choice of $\mathfrak{c}\in |\mathcal O_{\PP^2}(2)|$ and of $\sigma_{x_1},\sigma_{x_2}$  depends on $5+2=7$ parameters. For any such triple $\mathfrak{c}, \sigma_{x_1},\sigma_{x_2}$, let $Z(\mathfrak{c}, x_1+x_2)$ be the Schubert variety of all nets in $|\mathcal O_{\PP^2}(3)|=\PP^9$ containing the $1$-dimensional space spanned by $\sigma_{x_1}+\mathfrak{c}$ and $\sigma_{x_2}+\mathfrak{c}$.  Then $Z(\mathfrak{c}, x_1+x_2)$ is a projective space of dimension $7$ and we arrive at $7+7=14$ parameters. Two nets $\Sigma,\Sigma'$ of quadrics containing $\Gamma$ define the same surface $\overline{S'}$ up to projectivities as soon as they differ by an automorphism of $\PP^5$ fixing $\Gamma$, that is, by an automorphism of $\Gamma$ itself. Since $\mathrm{Aut}(\Gamma)\simeq \PGL(2)$, this brings the number of parameters down to $11$, as wanted.
\end{proof}

\begin{proof}[Proof of Theorem \ref{seidue}]
Let $U\subset |\mathcal{O}_{\PP^2}(2)|\times \Sym^2(\Gamma)$ be the open subset parametrizing pairs $(\mathfrak{c}, x_1+x_2)$ such that  $\mathfrak{c}$ intersects transversally the diagonal $\Delta$,  $\sigma_{x_1}$ and $\sigma_{x_2}$. Set $$\mathcal Z:=\left\{(\Sigma,\mathfrak{c}, x_1+x_2)\,\,|\,\,  (\mathfrak{c}, x_1+x_2)\in U,\,\, \Sigma\in Z(\mathfrak{c}, x_1+x_2)   \right\},$$ 
where $Z(\mathfrak{c}, x_1+x_2)$ is the Schubert variety defined in the previous proof. The forgetful map $\pi:\mathcal Z\to U$ gives $\mathcal Z$ the structure of a $\PP^7$-bundle over $U$. Let $G$ be the subgroup of $\PGL(3)$ fixing the plane conic $\Delta\subset \PP^2$. Then $G$ is naturally isomorphic to $\PGL(2)$ and acts linearly on $\mathcal Z$ and on $U$ in  the natural way: for every $f\in G$, we have $f(\Sigma,\mathfrak{c}, x_1+x_2)=(f_*(\Sigma),f(\mathfrak{c}), f(x_1)+f(x_2))$ where $f_*:|\mathcal{O}_{\PP^2}(3)|\to |\mathcal{O}_{\PP^2}(3)|$ is the induced map. The moduli space $\F_{23}^{\n,ns}$ is then birational to the quotient $\mathcal Z / G$. We want to show that $\mathcal Z / G$ is a $\PP^7$-bundle over $U/G$. By
Kempf's descent lemma \cite{DN}, it is enough to check that for every $o=(\mathfrak{c}, x_1+x_2)\in U$ the stabilizer of $\pi^{-1}(o)$ in $G$ is trivial: this is true because the conic $\mathfrak{c}$ intersects $\Delta$ in $4$ distinct points and  there are no non-trivial automorphisms of $\PP^1$ mapping a set of $4$ general points to itself. The rationality of $\F_{23}^{\n,ns}$ thus follows as soon as we show that  $U/G$ is rational. The quotient $(|\mathcal{O}_{\PP^2}(2)|\times \Sym^2(\Gamma))/G$ is birational to a $\PP^1$-bundle over the quotient $\left(\Sym^4(\Delta)\times \Sym^2(\Delta)\right)/\Aut(\Delta)$, where $\Delta\simeq \PP^1$. For a general $(w_1+w_2+w_3+w_4,w_5+w_6)\in\Sym^4(\PP^1)\times \Sym^2(\PP^1)$ the double cover of $\PP^1$ ramified at the $6$ points $w_1,\ldots,w_6$ is a genus $2$ curve $D$. Denoting by $y_i\in D$ the inverse image of $w_i$, it is straightforward to check that  $\mathcal O_D(y_5- y_6)\in \Pic^0(D)[2]$, that is, the pair $(D,\mathcal O_D(y_5-y_6))$ lies in $\mathcal R_2$. Two elements $(w_1+w_2+w_3+w_4,w_5+w_6)$ and $(w_1'+w_2'+w_3'+w_4',w_5'+w_6')$ define the same Prym curve in $\mathcal R_2$ if and only if they are obtained one from the other acting with an automorphism of $\PP^1$. By a parameter count, a general point of $\mathcal R_2$ is obtained with this construction and thus the quotient $\left(\Sym^4(\Delta)\times \Sym^2(\Delta)\right)/\Aut(\Delta)$ is birational to $\mathcal R_2$, which is rational \cite{Do}. This implies the rationality of $U/G$ and thus of the moduli spaces $\F_{23}^{\n,ns}$ and $\P_{6}$.\color{black}

In order to prove that the image of map $r_{6,2}:\P_{6}\to \mathcal R_{6,2}$ is a unirational divisor, it is enough to show that a general fiber of $r_{6,2}$ is $1$-dimensional. Take a general $(S,M,H,C)\in P_{6}$ and let $(C,x_1+x_2,M^\vee|_C)$ be its image in $\mathcal R_{6,2}$.

We claim that the space $|\I_{C/\PP^5}(2)|$ of quadrics containing $C\subset\PP^5=\PP(H^0(\omega_C\otimes M)^\vee)$ is three-dimensional. Indeed, this follows from the short exact sequence
\[
  \xymatrix{
    0 \ar[r] & \I_{C/\PP^5}(2) \ar[r] & \O_{\PP^5}(2) \ar[r] & \O_C(2) \ar[r] & 0}\]
once we show that the restriction map $r:H^0( \O_{\PP^5}(2)) \to H^0(\O_C(2))$ is surjective.
To prove the latter, we note that $r$ is the composition of the restriction map \linebreak  $H^0( \O_{\PP^5}(2)) \to H^0(\O_{\overline{S'}}(2))$, which is surjective as $\overline{S'}$ is a complete intersection, and the restriction map $H^0(\O_{\overline{S'}}(2))\to H^0(\O_C(2))$. The latter equals the restriction map $H^0(\O_S(2R')) \to H^0(\O_C(2))$, which has cokernel
\[ H^1(\O_S(2R'-R))=H^1(\O_S(\Gamma+N_1+N_2))=H^1(\O_S(-(\Gamma+N_1+N_2))^{\vee};\] this is zero since $\Gamma+N_1+N_2$ is connected. Thus, the claim is proved.

Since the linear system  $|\I_{C/\PP^5}(2)|$ cuts out a curve on $\overline{S'}$ whose inverse image on $S$ lies in $|2R'-R|=|\Gamma+N_1+N_2|$, which has only one member, we see that the base locus of $|\I_{C/\PP^5}(2)|$ is $C\cup \Gamma$. Since $\overline{S'}=Q_1\cap Q_2\cap Q_3$ with $Q_1$ and $Q_2$ singular at the double points $x_1$ and $x_2$ respectively, the base locus of a general net of quadrics in $|\I_{C/\PP^5}(2)|$ containing the $1$-dimensional space generated by $Q_1$ and $Q_2$ is the $2$-nodal model  in $\PP^5$ of a Nikulin surface in $\F_{23}^{\n,ns}$. We have a $\PP^1$ of such nets and they are not projectively equivalent because there is no projectivity of $\PP^5$ fixing $C$ and $\Gamma$. This proves that the fiber of $r_{6,2}$ over $(C,x_1+x_2,M^\vee|_C)$ is $1$-dimensional.
\end{proof}

\section{The case $\R_{5,4}$}
In this section we will prove the following theorem.

\begin{thm}\label{54}
The moduli spaces $\widehat{\F}_{21}^{\n,ns}$, $\F_{21}^{\n,ns}$  and $\widehat\P_{5}$ are unirational and $r_{5,4}$ is dominant. In particular, the moduli space $\R_{5,4}$ is unirational, too.
\end{thm}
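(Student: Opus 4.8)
The plan is to adapt the argument of Theorem \ref{seidue}, with $h=21\equiv 1\pmod 4$, so that $R$ and $R'$ have the same genus $g=g'=5$ and the same numerical type — which is precisely why the marked moduli space $\widehat\F_{21}^{\n,ns}$ is needed. I would work with the genus-$5$ model $\varphi_{R'}\colon S\to\overline{S'}\subset\PP^5$, a degree-$8$ surface with four nodes $x_1,\dots,x_4$ (the images of $N_1,\dots,N_4$) and four lines $N_5,\dots,N_8$. The role played by a rational normal curve in \S\ref{federico} is now taken by
\[\Gamma:=R-N_5-N_6-N_7-N_8,\qquad \Gamma^2=0,\quad \Gamma\cdot R'=6,\]
with $\Gamma\cdot N_i=1$ for $i\le 4$ and $\Gamma\cdot N_i=2$ for $i\ge 5$. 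As in Lemma \ref{lemma:Juve-15} (using $\Gamma\cdot H>0$ and analysing effective decompositions via \eqref{eq:pic9}) one sees that on a very general $S$ the class $\Gamma$ is represented by an irreducible elliptic curve; hence $\varphi_{R'}$ maps $\Gamma$ to an elliptic normal sextic $J\subset\PP^5$ passing simply through the four nodes and having $N_5,\dots,N_8$ as bisecant lines, and $\Gamma+N_5+\cdots+N_8$ is the distinguished reducible member of $|R|$.

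Next one locates the special quadrics. Since $R'$ is not trigonal (by \cite[Prop.~5.6]{DL}), $\overline{S'}$ is the base locus of a net $\Sigma$ of quadrics; projecting $\overline{S'}$ from a node $x_i$ gives, after checking as in \cite[Prop.~5.6]{DL} that $R'-N_i$ is neither trigonal nor hyperelliptic, a genus-$4$ model in $\PP^4$ which is the complete intersection of a cubic and a \emph{unique} quadric, whose cone from $x_i$ is a quadric $Q_i\in\Sigma$ singular at $x_i$. As $\dim\Sigma=2$, the four quadrics $Q_1,\dots,Q_4$ are linearly dependent and $\overline{S'}=Q_1\cap Q_2\cap Q_3$. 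Applying Proposition \ref{ellittica} to each $Q_i$ one gets $\alpha_J(Q_i)\in|M|$, $M\equiv 2\sigma+2f$ on $\Sym^2(J)$, and $\alpha_J(Q_i)=\sigma_{x_i}+\mathfrak c_i$ with $\mathfrak c_i\in|M-\sigma_{x_i}|$; since $N_j\cdot N_i=0$ for $j\ge 5$, $i\le 4$, none of the $\sigma_{x_i}$ passes through the four points $n_5,\dots,n_8\in\Sym^2(J)$ cut out by $N_5,\dots,N_8$, so each $\mathfrak c_i$ contains all of them, and a Riemann--Roch computation giving $h^0(M-\sigma_{x_i})=5$ shows $\mathfrak c_i$ is the unique such member (for a general position of the $n_j$).

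Conversely — the analogue of Proposition \ref{ita}, and the crux of the proof — I would start from a general elliptic normal sextic $J\subset\PP^5$, four points $x_1,\dots,x_4\in J$ and four bisecant lines $N_5,\dots,N_8$ (four points $n_5,\dots,n_8\in\Sym^2(J)$), set $Q_i:=\alpha_J^{-1}(\sigma_{x_i}+\mathfrak c_i)$ with $\mathfrak c_i\in|M-\sigma_{x_i}|$ the unique member through the $n_j$, impose that $Q_1,\dots,Q_4$ span only a net $\Sigma$, and let $\overline{S'}:=Q_1\cap Q_2\cap Q_3$. Since $Q_i$ is singular at $x_i$ (and $Q_4\in\Sigma$ forces dependence of the three gradients at $x_4$), $\overline{S'}$ is nodal at $x_1,\dots,x_4$; by generality — this holds for the projective model of an actual Nikulin surface — it has no further singularity, its minimal desingularization $\nu\colon S\to\overline{S'}$ is a $K3$ surface, the eight $N_i$ on $S$ are disjoint $(-2)$-curves, and $2\nu^*\O_{\overline{S'}}(1)-2\Gamma-\sum_iN_i$ has self-intersection and $R'$-degree zero, hence is trivial; thus $\sum_iN_i$ is $2$-divisible, and putting $M:=\tfrac12\sum_iN_i$, $R':=\nu^*\O_{\overline{S'}}(1)$, $R:=\O_S(\Gamma+N_5+\cdots+N_8)$, $H:=2R'+N_5+\cdots+N_8$ one checks that $(S,M,H,R)$ is a marked non-standard Nikulin surface of genus $21$ (indeed $H-2R=N_1+\cdots+N_4$). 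A parameter count as in Proposition \ref{ita}(ii) — with $\dim\PGL(6)=35$ and the conditions imposed — shows the data sweep out a dense open subset of the $11$-dimensional $\widehat\F_{21}^{\n,ns}$. The step I expect to be genuinely delicate is exactly this: controlling the four singular quadrics, the net condition and the $\PGL(6)$-action simultaneously, and verifying by semicontinuity (from the genuine-Nikulin case) that the constructed $\overline{S'}$ is $4$-nodal and otherwise smooth.

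Finally one deduces the (uni)rationality statements. The space of data fibers over the space of elliptic normal sextics in $\PP^5$, which modulo $\PGL(6)$ is the $j$-line; writing a Weierstrass equation $y^2=x^3+ax+b$ and solving for the coefficients one at a time shows that the total space of elliptic curves equipped with any fixed finite collection of marked points on the curve and on its symmetric square is rational, so after the finite symmetrizations, the net conditions, and the (finite-stabilizer) $\PGL(6)$-quotient one obtains that $\widehat\F_{21}^{\n,ns}$ is unirational. Composing a unirational parametrization with the double cover $\widehat\F_{21}^{\n,ns}\to\F_{21}^{\n,ns}$ gives unirationality of $\F_{21}^{\n,ns}$, and pulling it back along the $\PP^5$-bundle $\widehat\P_5\to\widehat\F_{21}^{\n,ns}$ (fibre $|R|$) gives unirationality of $\widehat\P_5$. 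Since $\dim\widehat\P_5=11+5=16=3\cdot5-3+4=\dim\R_{5,4}$, to see that $r_{5,4}$ is dominant it suffices to check it has finite general fibre: for a general $(C,x_1+\cdots+x_4,M^\vee|_C)$ one bounds $h^0(\I_{C/\PP^5}(2))$ via $0\to\I_{C/\PP^5}(2)\to\O_{\PP^5}(2)\to\O_C(2)\to 0$ exactly as in the proof of Theorem \ref{seidue}, and checks that the Nikulin data recovering it depend on finitely many choices. Hence $r_{5,4}$ is dominant and, in particular, $\R_{5,4}$ is unirational.
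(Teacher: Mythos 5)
Your geometric setup tracks the paper's almost line by line: the class $\Gamma:=R-N_5-\cdots-N_8$ is exactly the paper's $J$, the four quadrics singular at the nodes, the application of Proposition~\ref{ellittica}, and the reconstruction of a marked non-standard Nikulin surface of genus $21$ from $(J\subset\PP^5,\,x_1,\dots,x_4,\,n_5,\dots,n_8)$. Two small deviations: the paper uses only three points $x_1,x_2,x_3$ and shows that the fourth node appears \emph{automatically} (the class $N_4:=2R'-2J-N_1-N_2-N_3-N_5-\cdots-N_8$ is effective and contracted), which is cleaner than your condition that the four $Q_i$ span merely a net; and the paper proves $r_{5,4}$ is birational by specializing $C$ to $J+N_5+\cdots+N_8$ and using the uniqueness in Proposition~\ref{preparatoria}, rather than a dimension count on $h^0(\I_{C/\PP^5}(2))$.

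The genuine gap is in your unirationality argument. You assert that "the total space of elliptic curves equipped with any fixed finite collection of marked points on the curve and on its symmetric square is rational," citing Weierstrass equations. This is false in the required generality: $\M_{1,n}$ already has Kodaira dimension $0$ for $n=11$ and is of general type for larger $n$, and your configuration involves $3$ points on $J$ plus $4$ unordered bisecant pairs, which after symmetrization is an $\M_{1,11}$-type space — precisely the regime where the naive "solve for the Weierstrass coefficients" inductive argument breaks down (after the first two marked points determine $a$ and $b$, each further marked point imposes a non-trivial polynomial relation, and rationality of the resulting hypersurface is no longer free). So as written your final step does not go through. The paper sidesteps this with a key observation you missed: since $J^2=0$ and all members of $|J|$ are irreducible, $|J|$ is an elliptic \emph{pencil} which necessarily contains singular (nodal rational) fibers $J_0$. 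Replacing the smooth $J$ by such a rational $J_0$, the parameter space $\Sym^3 J_0\times\Sym^4(\Sym^2 J_0)$ becomes a product of projective spaces, hence rational, and the upper-semicontinuity of the quadric conditions (together with the fact that \eqref{bello} is non-effective) shows the resulting finite map to $\widehat{\F}_{21}^{\n,ns}$ is still dominant. That degeneration to a rational member of the elliptic pencil is the idea you need; without it the unirationality claim for $\widehat{\F}_{21}^{\n,ns}$ is not established.
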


Take a general $(S,M,H,R)$ in $\widehat{\F}_{21}^{\n,ns}$.
We have $g(R)=g(R')=5$, and $|R'|$ defines a morphism
$$\varphi_{R'}:S\longrightarrow \overline{S'}\subset \PP^5,$$
that contracts the curves $N_1,\ldots,N_4$ to $4$ double points $x_1,\ldots,x_4\in \overline{S'}$ and maps $N_5,\ldots,N_8$ to $4$ lines in  $\PP^5$.
Arguing as in the proof of Lemma \ref{lemma:Juve-15}, one proves the following:

\begin{lemma} \label{lemma:Juve-15-2}
The divisor
$J:=R-N_5-\cdots-N_8$
is effective, with
\[ J^2=0, \;\; J\cdot R'=6\;\; \mbox{and} \;\; J \cdot N_i =
\begin{cases}
  1, & i=1,\ldots,4 \\
  2, & i=5,\ldots,8,
\end{cases}
\]
and all members of $|J|$ are irreducible. 
\end{lemma}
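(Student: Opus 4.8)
The plan is to mimic the proof of Lemma \ref{lemma:Juve-15}, which is the analogous statement in the $h=23$ case, and adapt the numerology to $h=21$ (so $h\equiv 1\bmod 4$, $g=g'=5$). First I would verify the intersection numbers and the self-intersection: using $R^2 = 2g(R)-2 = 8$, $N_i^2 = -2$, $N_i\cdot N_j = 0$, $R\cdot N_i = 2$ for all $i$ (since $2R \sim H - N_1-N_2-N_3-N_4$ and $H\cdot N_i = 0$), and the relation $2R' \sim H - N_5-\cdots-N_8$, one computes $J\cdot N_i$ for $i=1,\dots,4$ and $i=5,\dots,8$ directly, gets $J^2 = R^2 - 2\sum_{i=5}^8 R\cdot N_i + (\sum_{i=5}^8 N_i)^2 = 8 - 16 + (-8) = 0$, and $J\cdot R' = R\cdot R' - \sum_{i=5}^8 N_i\cdot R'$. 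These are purely formal lattice computations in $\Pic S \cong \ZZ[R]\+\mathbf N$ from \eqref{eq:pic9}, valid for a very general Nikulin surface, and effectivity of $J$ follows since $J\cdot R' = 6 > 0$ and $R'$ is ample (or nef and big), so by Riemann--Roch $h^0(J) \geq \chi(\O_S) + \tfrac12 J^2 = 2 > 0$ after checking $h^0(-J)=0$.

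Next I would establish irreducibility of all members of $|J|$. As in Lemma \ref{lemma:Juve-15}, the argument is: in any effective decomposition $J = D+E$ with $D \sim aR + \sum \frac{b_i}{2}N_i$, effectivity of $D$ forces $a\geq 0$, and effectivity of $E = J - D$ forces $a\leq 1$; by the symmetry $N_1,\dots,N_4 \leftrightarrow N_1,\dots,N_4$ and $N_5,\dots,N_8\leftrightarrow N_5,\dots,N_8$ implicit in the definition of $J$ one reduces to $a=0$, so $D$ is supported on the $N_i$. Writing $J \sim \sum \gamma_i N_i + J_0$ with $J_0$ irreducible and $\gamma_i\geq 0$, the inequality $-2\leq J_0^2 = (J - \sum\gamma_i N_i)^2$ expands (using $J\cdot N_i$ from the first part) to $-2 \leq -2\sum\gamma_i^2 - 2(\gamma_1+\gamma_2+\gamma_3+\gamma_4 + 2\gamma_5+\cdots+2\gamma_8)$, forcing all $\gamma_i = 0$; hence $J$ itself is irreducible, and since $J^2 = 0$ the same holds for every member of $|J|$ (any reducible member would contradict irreducibility of the general member together with $|J|$ being base-point-free, which holds because $J^2=0$, $J$ nef, and $J$ primitive so it is a genus-$3$ elliptic pencil).

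The main obstacle, such as it is, is bookkeeping rather than conceptual: one must be careful that the two subsets of curves $\{N_1,\dots,N_4\}$ and $\{N_5,\dots,N_8\}$ play asymmetric roles (reflected in $J\cdot N_i$ being $1$ versus $2$), so the "by symmetry" reduction in the decomposition argument only exchanges indices within each block; and one must confirm that the generality hypothesis (Picard number $9$, so \eqref{eq:pic9}) is preserved, which is fine since irreducibility and base-point-freeness are open conditions on $\widehat\F_{21}^{\n,ns}$. I expect no genuine difficulty: the statement is the exact analogue of Lemma \ref{lemma:Juve-15}, with $\Gamma^2 = -2$ replaced by $J^2 = 0$ because now four curves $N_1,\dots,N_4$ (rather than six) are subtracted from $R$, and the proof transcribes almost verbatim.
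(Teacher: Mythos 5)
Your overall strategy is exactly what the paper does (the paper's own ``proof'' of this lemma is the single sentence ``Arguing as in the proof of Lemma~\ref{lemma:Juve-15}, one proves the following''), and the decomposition argument you transcribe from Lemma~\ref{lemma:Juve-15} is the right tool. However, two of your justifications are wrong as written.

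First, the intersection numbers: from $2R\sim H-N_1-N_2-N_3-N_4$ and $H\cdot N_i=0$ one gets $R\cdot N_i=1$ for $i\le 4$ and $R\cdot N_i=0$ for $i=5,\ldots,8$, not $R\cdot N_i=2$ for all $i$. This matters for your displayed computation of $J^2$: the correct expansion is $J^2=R^2-2\sum_{i=5}^8 R\cdot N_i+\bigl(\sum_{i=5}^8 N_i\bigr)^2=8-0-8=0$, whereas your line $8-16+(-8)=0$ is wrong twice (the middle term should be $0$, and $8-16-8=-16\ne 0$). The values of $J\cdot N_i$ still come out to $1$ and $2$ because they are dominated by the $N_j\cdot N_i$ contributions, so the final numerology is right, but the reasoning you wrote down would not produce it. Also note that the ``symmetry'' invoked in Lemma~\ref{lemma:Juve-15} is between the summands $D$ and $E$ (swapping $a\leftrightarrow 1-a$), not between the two blocks of $N_i$'s as you describe; your reading is harmless but not what the paper means.

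Second, and more substantively, the closing parenthetical is not a valid argument: base-point-freeness of $|J|$ together with irreducibility of the \emph{general} member does not force every member to be irreducible (any elliptic $K3$ with a reducible fiber is a counterexample), and in any case deducing base-point-freeness from ``$J$ nef'' is circular, since nefness of $J$ is exactly what one is proving by the decomposition argument. Also $p_a(J)=1$, not $3$. The parenthetical is unnecessary: the decomposition argument is purely numerical in $\operatorname{Pic}S$, so it applies to an \emph{arbitrary} effective divisor in the class $J$, and therefore already yields that every member of $|J|$ is irreducible. This is exactly the point of the changed wording here (``all members of $|J|$ are irreducible'') as opposed to Lemma~\ref{lemma:Juve-15} (``is represented by an irreducible curve''), where $\Gamma^2=-2$ makes $|\Gamma|$ a single point so the distinction evaporates. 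You should delete the parenthetical and instead observe explicitly that the decomposition runs for any effective $\Gamma_0\sim J$.
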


It follows that $|J|$ is a base point free elliptic pencil and that
$\varphi_{R'}$ maps any fixed smooth $J$ in its linear system 
to an elliptic normal curve $J\subset \PP^5$ passing through the nodes $x_1,\ldots x_4\in \overline{S'}$ and having $N_5,\ldots,N_8$ as bisecant lines.  As in the previous section, one shows that $\overline{S'}$ is the base locus of a net  $\Sigma$ of quadrics containing $J$ and for every $1\leq i\leq 4$ the net $\Sigma$ contains a unique quadric $Q_i$ singular at $x_i$.  We show that there is no further quadric in $\PP^5$ that is singular at $x_i$ and contains $\Gamma,N_5,\ldots,N_8$. Indeed, such a quadric would cut out a divisor on $\overline{S'}$  whose strict transform on $S$ would lie in
\begin{equation}\label{bello} |2R'-J-2N_i-N_5-\cdots -N_8|=|J+(N_1+N_2+N_3+N_4-N_i)-N_i |.
\end{equation}
However, this linear system is empty. Indeed, since its intersection with $N_j$ is $-1$ for $j \in \{1,2,3,4\} \setminus \{i\}$, if \eqref{bello} were nonempty then
$J-N_i$ would be linearly equivalent to a nontrivial effective divisor, contradicting Lemma \ref{lemma:Juve-15-2}.

\begin{prop}\label{preparatoria}
 Fix an elliptic normal curve $J\subset\PP^5$. Take $3$ general points $x_1,x_2,x_3\in J$ and $4$ general lines $N_5,\ldots, N_8$ bisecant to $J$. Then the following hold:
\begin{itemize}
\item[(i)] For each $1\leq i\leq 3$, there exists a unique quadric $Q_i\subset\PP^5$ that is singular at $x_i$ and contains $N_5,\ldots, N_8, J$.
\item[(ii)] For $Q_1,Q_2,Q_3$ as in (i), the minimal desingularization $S$ of the complete intersection $\overline{S'}:= Q_1\cap Q_2\cap Q_3\subset \PP^5$ is a marked non-standard Nikulin surface of genus $21$.
\item[(iii)] By varying $x_1,x_2,x_3\in J$ and $n_5,\ldots,n_8\in \Sym^2J$, one obtains all members in a dense open subset of $\widehat{\F}_{21}^{\n,ns}$. 
\end{itemize}
\end{prop}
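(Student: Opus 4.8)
The plan is to follow the template of Proposition \ref{ita}, transporting the argument from the rational normal curve to the elliptic normal curve $J\subset\PP^5$, using Proposition \ref{ellittica} in place of Proposition \ref{razionale}. First I would prove (i): a quadric singular at $x_i\in J$ corresponds, via the isomorphism $\alpha_J:|\I_{J/\PP^5}(2)|\to |M|$ with $M\equiv 2\sigma+2f$ (the case $n=5$), to a curve in $|M|$ containing $\sigma_{x_i}$; since $\sigma_{x_i}\equiv\sigma$, the residual system is $|M-\sigma_{x_i}|\cong|\sigma+2f|$, a pencil. The four bisecant lines $N_5,\ldots,N_8$ impose four point-conditions $n_5,\ldots,n_8\in\Sym^2(J)$ on this pencil; for general choices these four conditions cut the pencil down to a single member, which moreover does not pass through $\sigma_{x_i}$ a second time (this is where the genericity of the $n_j$ enters, and it mirrors the conic $\mathfrak c_1$ being uniquely determined in the rational case). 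Hence $Q_i$ exists and is unique. I would remark that the genericity used here is realized by the projective model of an actual Nikulin surface, exactly as in the first paragraph of the proof of Proposition \ref{ita}, so the open set is nonempty.

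For (ii), I would set $\overline{S'}:=Q_1\cap Q_2\cap Q_3$, a complete intersection of three quadrics in $\PP^5$, hence with trivial canonical bundle, having ordinary double points precisely at $x_1,x_2,x_3$ and — by the genericity/degeneration argument, since this holds for the projective model of a genuine Nikulin surface — no further singularities. Let $\nu:S\to\overline{S'}$ be the minimal desingularization; then $S$ is a $K3$ surface and $N_i:=\nu^{-1}(x_i)$ for $i=1,2,3$ together with the strict transforms $N_5,\ldots,N_8$ of the four bisecant lines are disjoint $(-2)$-curves. Setting $R':=\nu^*\O_{\overline{S'}}(1)$, the divisor $2R'-\sum_{i\in\{1,2,3,5,6,7,8\}}N_i-2J$ has self-intersection $0$ and degree $0$ against the (nef) $R'$, hence is trivial; but that gives a divisor that is $2$-divisible \emph{after} producing the eighth curve. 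The point is that $\sum N_i +N_4=2R'-2J$ forces a fourth disjoint $(-2)$-curve $N_4$ with $[N_4]=2(R'-J)-\sum_{i\neq 4}[N_i]$; one checks $N_4^2=-2$ and $N_4\cdot N_i=0$ using Lemma \ref{lemma:Juve-15-2}, and — again by specialization to a true Nikulin surface where $N_4$ is effective and irreducible — it is represented by an irreducible $(-2)$-curve for general members of the family. Then $M:=\tfrac12(N_1+\cdots+N_8)$, $R:=\nu^*\O_{\overline{S'}}(J+N_5+\cdots+N_8)$ and $H:=R+R'+M$ satisfy Definition \ref{def:Nik}, and $H-2R=-\,(N_5+N_6+N_7+N_8)+\,(\text{correction})$ is a sum of four of the $N_i$, so $(S,M,H,R)\in\widehat\F_{21}^{\n,ns}$.

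Finally, for (iii) I would run a parameter count. The elliptic normal curve $J\subset\PP^5$ may be fixed; choosing $x_1,x_2,x_3\in J$ costs $3$ parameters and choosing $n_5,\ldots,n_8\in\Sym^2(J)$ costs $8$ parameters, for a total of $11$. These determine $Q_1,Q_2,Q_3$, hence $\overline{S'}$, hence $(S,M,H,R)$; and since $\dim\widehat\F_{21}^{\n,ns}=\dim\F_{21}^{\n,ns}=11$, it suffices to check that the resulting map to $\widehat\F_{21}^{\n,ns}$ is generically finite, i.e. that two such data give projectively equivalent models only if related by an automorphism fixing $J$ — but a general elliptic normal curve has only finitely many such automorphisms (the translations being killed once the $x_i,n_j$ are generic), so the map is dominant onto a dense open subset by a dimension count together with the irreducibility of $\widehat\F_{21}^{\n,ns}$ (Appendix \ref{appendix}).

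The main obstacle I expect is (ii): producing the eighth $(-2)$-curve $N_4$ and verifying it is irreducible and disjoint from the others. Unlike the genus $23$ case, here only three nodes of $\overline{S'}$ are visible and the fourth contracted curve does not sit on the projective model, so its existence must be deduced from the $2$-divisibility of $\sum_{i\neq4}N_i-2(R'-J)$ in $\Pic S$ together with a specialization argument to a genuine non-standard Nikulin surface, where the full configuration is known to exist and all openness/irreducibility conditions can be invoked.
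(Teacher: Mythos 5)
Your strategy follows the paper's template closely, and your treatment of (iii) is essentially the paper's own parameter count. However, there are two concrete arithmetic errors that need fixing, even though in both places the intended conclusion is correct.

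In (i), your route is genuinely different from the paper's: the paper simply argues that the unique $Q_i$ exists on the projective model of a genuine Nikulin surface (shown just before the Proposition, via the non-effectivity of \eqref{bello}) and then invokes semicontinuity, whereas you try to compute directly inside $|\mathcal{I}_{J/\PP^5}(2)|$ via Proposition~\ref{ellittica}. That could work, but the dimension is off: for $n=5$ one has $M\equiv 2\sigma+2f$ and $h^0(M)=9$, so $|M-\sigma_{x_i}|\cong|\sigma+2f|$ has $\chi=5$ and projective dimension $4$, not $1$. This is not a harmless slip: if the residual system really were a pencil, imposing four general point-conditions $n_5,\ldots,n_8$ would leave it \emph{empty}, contradicting the claimed existence of $Q_i$. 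With the correct dimension $\PP^4$, the four generic conditions do cut it down to a single point, and the argument closes; but as written the step fails.

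In (ii), the claim that $2R'-\sum_{i\in\{1,2,3,5,6,7,8\}}N_i-2J$ ``has self-intersection $0$\ldots hence is trivial'' is false: using $R'^2=8$, $R'\cdot J=6$, $J\cdot N_i=1$ for $i\le 4$ and $=2$ for $i\ge 5$, and $R'\cdot N_i=0$ for $i\le 4$, $=1$ for $i\ge 5$, one computes that this divisor has self-intersection $-2$ (it \emph{is} $N_4$). You seem to realize this mid-sentence and pivot to defining $N_4$ by exactly this class, which is what the paper does; just delete the incorrect triviality claim and keep only the definition of $N_4$, together with the checks $N_4^2=-2$, $N_4\cdot J=1$ (whence effectivity by Riemann--Roch and Serre duality) and $N_4\cdot R'=0$ (whence $N_4$ is contracted). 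For irreducibility, your specialization to a genuine Nikulin surface is a reasonable alternative, but it requires knowing that the class $N_4$ varies in a flat family over the whole parameter space; the paper's argument is more robust: if $N_4$ were reducible the Picard number would exceed $9$ and the family of such $K3$ surfaces would have dimension $<11$, which is then ruled out by the dimension count in (iii).
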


\begin{proof} 
By the above discussion, (i) holds when $x_1,x_2,x_3$ and $N_5,\ldots,N_8$ are nodes and lines on the projective model of a general Nikulin surface, which is thus limit of general complete intersections as in (ii). By generality, (i) holds for general choices of $x_1,x_2,x_3\in J$ and $n_5,\ldots,n_8\in \mathrm{Sym}^2J$ as well. 

A surface $\overline{S'}$ as in (ii) has trivial canonical bundle, contains the lines $N_5,\ldots, N_8$ and has $3$ double points at  $x_1,x_2,x_3$.  By generality, $\overline{S'}$ has at most $4$ double points as this occurs when one starts with a Nikulin surface. We define $N_1,N_2,N_3$ as the exceptional divisors of the minimal desingularization $\nu:S\to \overline{S'}$ over the point $x_1,x_2,x_3$, respectively. We now show that $\overline{S'}$ has automatically a fourth double point. We set $R':=\nu^*\mathcal{O_{\overline{S'}}}(1)$ and consider the following divisor on $S$: 
$$
N_4:=2R'-2J-N_1-N_2-N_3-N_5-N_6-N_7-N_8.
$$
It is straightforward to check that $N_4^2=-2$ and $N_4 \cdot J=1$, so that $N_4$ is effective by Riemann-Roch and Serre duality. Moreover, 
$N_4\cdot R'=0$, whence $N_4$ is contracted to a fourth singular point $x_4$ of $\overline{S'}$, which is a double point if $N_4$ is irreducible. By construction $N_i\cdot N_j=0$ for $i\neq j$  and $\sum N_i \sim 2M$, with $M:=R'-J$. Setting $R:=\nu^*\mathcal{O}_{\overline{S'}}(J+N_5+\cdots+N_8)$ and $H:=R+R'+M$, it is easy to check that, if $N_4$ is irreducible, then  $(S,M,H,R)\in \widehat{\F}_{21}^{\n,ns}$. If $N_4$ were not irreducible, then the Picard group of $S$ would have rank $>9$; whence, by standard theory of $K3$ surfaces, the obtained family of $K3$ surfaces would have dimension $<20-9=11$. Thus, (i) will follow once we show that this construction provides an $11$-dimensional family of $K3$ surfaces.

By (i),  our construction depends on $3+8=11$ parameters, corresponding to the choice of $x_1,x_2,x_3\in J$ and $n_5,\ldots,n_8\in\Sym^2J$. Since there are no projectivities of $\PP^5$ fixing an elliptic normal curve and the automorphism group of any $K3$ surface is discrete, we obtain an $11$-dimensional family of $K3$ surfaces, thus proving (ii) and (iii). 
\end{proof}

\begin{proof}[Proof of Theorem \ref{54}]
  By Proposition \ref{preparatoria}, a general  $(S,M,H,R)\in
\widehat{\F}_{21}^{\n,ns}$ is obtained as the minimal desingularization of the intersection $\overline{S'}$ of $3$ quadrics $Q_1,Q_2,Q_3$ in $\PP^5$ containing a fixed elliptic normal curve $J\subset \PP^5$ and $4$ bisecant lines $N_5,\ldots, N_8$ to it and such that each $Q_i$ is singular at one point $x_i\in J$; note that the line bundle $R'=H-M-R$ is the one giving the map to $\overline{S'}\subset\PP^5$. 

To obtain the unirationality of $\widehat{\F}_{21}^{\n,ns}$, $\F_{21}^{\n,ns}$, $\widehat\P_{5}$, we recall that the strict transform of $J$ defines an elliptic pencil $|J|$ on $S$, that contains singular fibers. By Proposition \ref{lemma:Juve-15-2} all members of $|J|$ are irreducible. Thus $|J|$ contains an irreducible rational curve $J_0$, which is mapped isomorphically by $\varphi_{R'}$ to a curve passing simply through the $4$ double points $x_1,\ldots,x_4\in\overline{S'}$ and having $N_5,\ldots, N_8$ as bisecant lines.

Since $J_0\in |J|$ and \eqref{bello} is not effective, the quadrics $Q_1,Q_2,Q_3$ (and thus $\overline{S'}$) can be recovered from the datum of $x_1+x_2+x_3\in \Sym^3(J_0)$ and $n_5,\ldots, n_8\in\Sym^2(J_0)$. By upper semicontinuity, having fixed the embedding $J_0\subset \PP^5$, there is only one quadric in $\PP^5$ singular at a general point of $J_0$ and containing $J_0$ and $4$ general bisecant lines to $J_0$. Furthermore, starting from $3$ general points on $J_0$ and $4$ general bisecant lines to $J_0$, one obtains as in the proof of Proposition \ref{preparatoria} a
non-standard Nikulin surface by desingularizing the intersection of the uniquely determined $3$ quadrics. In other words, there is a finite  (corresponding to the choice of $3$ out of $4$ lines and of $J_0\in |J|$) dominant map $\Sym^3{J_0}\times \Sym^4(\Sym^2J_0)\to \widehat{\F}_{21}^{\n,ns}$ (as there was for $J$ by Proposition \ref{preparatoria}); since the domain is rational, this implies the unirationality of  $\widehat{\F}_{21}^{\n,ns}$, $\F_{21}^{\n,ns}$ and  $\widehat\P_{5}$.   

We now show that $r_{5,4}:\widehat\P_{5}\to \mathcal R_{5,4}$ is birational, thus obtaining the unirationality of $\mathcal R_{5,4}$. Take a general $(S,M,H,R,C)\in \widehat\P_{5}$ and let $(C,x_1+x_2+x_3+x_4,M^\vee|_C)$ be its image in $\mathcal R_{5,4}$. We need to show that $\overline{S'}$ is the only surface that contains $C\subset \PP(H^0(\omega_C\otimes M)^\vee)=\mathbb P^5$ and $4$ lines and can be written as the base locus of a net of quadrics supporting a quadric singular at $x_i\in C$ for every $1\leq i\leq 4$. It is then enough to specialize $C$ to a curve $J+N_5+\cdots+N_8\in |R|$ and apply Proposition \ref{preparatoria}(ii).\color{black}
\end{proof}

\section{The case $\R_{5,2}$}

In this section we will prove the following theorem.

\begin{thm}\label{thm:5,2}
  The moduli spaces $\F_{19}^{\n,ns}$ and $\P_{5}$ are unirational.  The map $r_{5,2}$ is dominant; in particular the moduli space $\R_{5,2}$ is unirational.
\end{thm}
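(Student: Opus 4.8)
The plan is to follow exactly the pattern established for $\F_{23}^{\n,ns}$ and $\F_{21}^{\n,ns}$, but now for $h=19$, where $g(R)=g(R')=5$. Take a general $(S,M,H)\in \F_{19}^{\n,ns}$; since $19\equiv 3\pmod 4$ we have $g=g(R)=5$ and $g'=g(R')=4$. The curve $C\in|R|$ carries a degree-one line bundle $M|_C$ with $M|_C^{\otimes 2}\simeq \O_C(x_1+x_2)$, so $(C,x_1+x_2,M^\vee|_C)\in\R_{5,2}$; this is the copy of $\R_{5,2}$ we want to dominate. Since $\dim\R_{5,2}=3\cdot5-3+2=14$ and $\dim\P_5=\dim\F_{19}^{\n,ns}+g=11+5=16$, the map $r_{5,2}$ is expected dominant with $2$-dimensional fibers, so one cannot prove generic injectivity; rather I would show $r_{5,2}$ is dominant and separately establish unirationality of $\F_{19}^{\n,ns}$ and $\P_5$, from which the unirationality of $\R_{5,2}$ follows.

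\textbf{Step 1: the distinguished curve.} First I would define $\Gamma:=R'-N_3-\cdots-N_8$ (or the symmetric choice) and prove, arguing exactly as in Lemma \ref{lemma:Juve-15} via \eqref{eq:pic9}, that $\Gamma$ is represented by an irreducible curve with $\Gamma^2$ equal to $-2$ or $0$ (I'd expect $0$ here, so $|\Gamma|$ an elliptic pencil, because $g'=4$ is one less than in the $h=23$ case) and with the appropriate intersection numbers $\Gamma\cdot N_i$. Actually, since for $h\equiv 3\pmod 4$ we have $R'\sim (H-N_3-\cdots-N_8)/2$, the relevant curve living in the $|R'|$-model $\overline{S'}\subset\PP^4$ is $R-N_1-N_2$ — no: the right choice is to map via $\varphi_R:S\to\overline S\subset\PP^4$ (since $g=5$ gives $h^0(R)=5$), contract $N_1,N_2$ to two nodes, send $N_3,\dots,N_8$ to six lines, and work with $\Gamma:=R'-N_1-N_2$ mapped to a curve in $\PP^4$ through the two nodes. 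I would compute $\Gamma^2$ and the $\Gamma\cdot N_i$ and the vanishing $h^0(R'-N_1-\cdots-?)$ needed to rule out extraneous quadrics, mirroring Lemma \ref{lemma:Juve-15}.

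\textbf{Step 2: quadrics and the plane-curve dictionary.} Since $g=5$, the $|R|$-model $\overline S\subset\PP^4$ of a general non-hyperelliptic, non-trigonal Nikulin surface is the complete intersection of three quadrics (by the analysis in \cite{DL}, \cite[Prop.~5.6]{DL}); as in the previous two sections I would show $\overline S$ is the base locus of a net $\Sigma$ of quadrics, that $\Sigma$ contains a unique quadric $Q_i$ singular at each node $x_i$ ($i=1,2$) — using the vanishing from Step 1 to exclude a positive-dimensional family of such quadrics — and write $\overline S=Q_1\cap Q_2\cap Q_3$. Then apply Proposition \ref{razionale} (if $\Gamma^2=-2$ and $\Gamma$ is rational normal of degree $4$ in $\PP^4$) or Proposition \ref{ellittica} (if $\Gamma^2=0$, an elliptic normal curve of degree $5$ in $\PP^4$) to translate $Q_1,Q_2,Q_3$ into plane curves in $|\O_{\PP^2}(n-2)|$ with $n=4$, i.e.\ conics, split off $\sigma_{x_1},\sigma_{x_2}$, and reduce to counting/constructing configurations of conics (or, in the elliptic case, curves on $\Sym^2J$) through the six points $n_3,\dots,n_8$ coming from $N_3,\dots,N_8$. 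Then prove a converse Proposition (the analogue of \ref{ita} / \ref{preparatoria}): a general such configuration yields, after desingularizing $Q_1\cap Q_2\cap Q_3$, a genuine $(S,M,H)\in\F_{19}^{\n,ns}$, and a parameter count gives the full $11$ dimensions.

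\textbf{Step 3: rationality/unirationality of the parameter space and dominance of $r_{5,2}$.} For $h\equiv3\pmod4$ no marking double cover is needed, so $\P_5$ is a projective bundle (the $|R|$-linear system, of dimension $g=5$) over $\F_{19}^{\n,ns}$, and it suffices to show $\F_{19}^{\n,ns}$ is unirational. I would exhibit, as in the proof of Theorem \ref{seidue}, a dominant rational map from a product of symmetric powers of $\Gamma$ (the points $x_1,x_2$ and $n_3,\dots,n_8$, plus the residual freedom in the net) — a rational variety — onto $\F_{19}^{\n,ns}$; the quotient by $\Aut(\Gamma)\cong\PGL(2)$ is handled either by Kempf descent as in Theorem \ref{seidue} or, more crudely, just by noting a rational variety dominates $\F_{19}^{\n,ns}$, which already gives unirationality. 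Finally, for dominance of $r_{5,2}:\P_5\to\R_{5,2}$ I would take a general $(C,x_1+x_2,M^\vee|_C)\in\R_{5,2}$, embed $C\subset\PP^4=\PP(H^0(\omega_C\otimes M)^\vee)$ by $|\omega_C\otimes M|$ (degree $2g-2+1=11$... wait, degree $\deg\omega_C+\deg M=8+1=9$ in $\PP^4$), show via the exact sequence $0\to\I_{C/\PP^4}(2)\to\O_{\PP^4}(2)\to\O_C(2)\to0$ and a vanishing like $H^1(\O_S(2R-R'))=H^1(\O_S(\Gamma+N_1+N_2))=0$ (since $\Gamma+N_1+N_2$ is connected) that $|\I_{C/\PP^4}(2)|$ has the expected dimension, and produce a net of quadrics in it with two members singular at $x_1,x_2$ whose base locus is a $2$-nodal Nikulin model — getting a positive-dimensional fiber, hence (with the expected-dimension count $16-14=2$) dominance. \textbf{The main obstacle} I anticipate is Step 1–2: pinning down whether $\Gamma$ has self-intersection $-2$ or $0$ and hence which of Propositions \ref{razionale}/\ref{ellittica} applies, and — crucially — proving the \emph{uniqueness} of the nodal quadrics $Q_i$, i.e.\ the vanishing of the relevant linear system on $S$; this is where the genus drop from $23$ to $19$ changes the bookkeeping and where a naive transcription of Lemma \ref{lemma:Juve-15} could fail, so the intersection-number computations and the exclusion of extraneous effective divisors must be redone carefully with $\Pic S=\ZZ[R]\oplus\n$.
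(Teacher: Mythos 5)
There is a genuine gap, and it sits exactly where you flagged uncertainty at the end. The trouble starts with a dimension error: for a $K3$ surface, $h^0(R)=g(R)+1=6$, not $5$, so $\varphi_R$ maps $S$ into $\PP^5$, not $\PP^4$. The paper instead uses $\varphi_{R'}:S\to\overline{S'}\subset\PP^4$ (since $g(R')=4$), and because $R'\cdot N_i=0$ for $i=1,2$ and $R'\cdot N_i=1$ for $i\ge 3$, it is $N_1,N_2$ that contract to nodes and $N_3,\ldots,N_8$ that become lines — you have the contraction pattern backwards. Moreover, a $K3$ model of genus $4$ in $\PP^4$ is a complete intersection of a \emph{quadric and a cubic}, not of three quadrics (three quadrics in $\PP^4$ would cut a curve). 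So the entire "net of quadrics $\rightsquigarrow$ plane conics via Proposition~\ref{razionale}/\ref{ellittica}" dictionary you propose for Step~2 is the wrong framework here; the paper uses Proposition~\ref{razionale} only for the \emph{unique} quadric through the rational normal quartic, and controls the cubic component instead by restriction maps $H^0(\I^2_{\mathfrak n/\PP^4}(3))\to H^0(\O_\Gamma(3)(-2\mathfrak n))\to\oplus H^0(\O_{N_i}(1))$ together with the $h^0$-computations of Lemma~\ref{lemma:Juve-15-4}.

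Your candidate $\Gamma:=R'-N_1-N_2$ is also not the right distinguished curve: since $R'\cdot N_1=R'\cdot N_2=0$, this class has $\Gamma^2=(R')^2+N_1^2+N_2^2=6-2-2=2$, a genus-$2$ class, not rational or elliptic — it is not the normal curve you need. The paper's choice is $\Gamma:=R-N_3-\cdots-N_7$, deliberately \emph{omitting} $N_8$, giving $\Gamma^2=-2$, $\Gamma\cdot R'=4$, i.e.\ a rational normal quartic in $\PP^4$ passing through the two nodes and bisecant to $N_3,\ldots,N_7$ but disjoint from $N_8$; this asymmetry is precisely what makes the quadric through $\Gamma$ and five bisecant lines unique (five general points in $\PP^2\cong\Sym^2\Gamma$ impose independent conditions on conics). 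Finally, for the dominance of $r_{5,2}$, the quadric through $C\subset\PP^4$ is unique (not a net), so the extra two fiber dimensions must come from the pencil of cubics singular at $x_1,x_2$ (Lemma~\ref{lemma:incubichesing2}), not from quadrics; the paper gets this by specializing $C$ to $\Gamma+N_3+\cdots+N_7\in|R|$ and invoking Proposition~\ref{preparatoria5,2}(i). In short, your high-level plan (distinguished curve + nodes and lines + Kempf descent + specialization to a reducible curve for dominance) matches the paper, but the concrete $h=19$ bookkeeping — which ambient $\PP^4$, which complete intersection type, which $\Gamma$, and which linear system carries the two extra fiber dimensions — is wrong as written and must be redone along the lines above.
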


Take a general $(S,M,H)$ in $\F_{19}^{\n,ns}$, so that $g(R)=5$ and $g(R')=4$. The morphism
$$\varphi_{R'}:S\longrightarrow \overline{S'}\subset \PP^4,$$
contracts the curves $N_1,N_2$ to $2$ double points $x_1,x_2 \in \overline{S'}$ and maps $N_3,\ldots,N_8$ to $6$ lines in  $\PP^4$.

\begin{lemma} \label{lemma:Juve-15-4}
The divisor
$
\Gamma:=R-N_3-\cdots-N_7
$
on $S$ is represented by an irreducible curve satisfying
\[ \Gamma^2=-2, \;\; \Gamma\cdot R'=4 \;\; \mbox{and} \;\; \Gamma \cdot N_i =
\begin{cases}
  1, & i=1,2 \\
  2, & i=3,4,5,6,7,\\
  0, & 8. 
\end{cases}
\]
Moreover, $h^0(3R'-2(N_1+N_2))=21$, $h^0(S,\O_S(3R'-2(N_1+N_2)-\Gamma))=12$ and
$h^0(S,\O_S((3R'-2(N_1+N_2)-\Gamma-(N_3+\cdots+N_7))=2$ 
\end{lemma}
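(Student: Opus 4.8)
The plan is to handle the intersection numbers and the assertions about $\Gamma$ exactly as in Lemma~\ref{lemma:Juve-15}, and to reduce the three $h^0$-values, which are the genuinely new content, to the nefness of a single line bundle. First, from $H\cdot N_i=0$ together with $H\sim 2R+N_1+N_2\sim 2R'+N_3+\cdots+N_8$, $N_i^2=-2$ and $N_i\cdot N_j=0$ for $i\neq j$, I would record $R\cdot N_i$, $R'\cdot N_i$, $R^2=8$, $R'^2=6$, $R\cdot R'=9$, $R\cdot M=1$, $R'\cdot M=3$, $M^2=-4$; all the stated intersection numbers of $\Gamma=R-N_3-\cdots-N_7$ then follow by linearity. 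Since $\Gamma\cdot R'=4>0$ and $R'$ is nef, $-\Gamma$ is not effective, so $h^2(\O_S(\Gamma))=0$, and as $\chi(\O_S(\Gamma))=2+\tfrac12\Gamma^2=1$ the divisor $\Gamma$ is effective. For its irreducibility I would argue verbatim as in Lemma~\ref{lemma:Juve-15}: one may assume $\Pic S=\ZZ[R]\oplus\mathbf{N}$ since irreducibility is open, shows that in any effective decomposition of $\Gamma$ all components but one are supported on the $N_i$, writes $\Gamma\sim\sum\gamma_iN_i+\Gamma_0$ with $\gamma_i\geq 0$ and $\Gamma_0$ effective and irreducible, and derives $\gamma_i=0$ from $\Gamma_0^2\geq-2$.

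For the dimension counts I would set $D_1:=3R'-2(N_1+N_2)$, $D_2:=D_1-\Gamma$, $D_3:=D_2-(N_3+\cdots+N_7)$ and use $H\sim 2R+N_1+N_2\sim 2R'+N_3+\cdots+N_8$ and $\sum_iN_i\sim 2M$ to verify
\begin{gather*}
D_1\sim 2R+R'-2M\sim R+D_3,\qquad D_2\sim D_3+N_3+\cdots+N_7\sim 2R-M-N_8, \\
D_3\sim R+R'-2M\sim H-3M.
\end{gather*}
Riemann--Roch gives $D_1^2=38$, $D_2^2=20$, $D_3^2=0$, hence $\chi(\O_S(D_i))=21,12,2$, and since each $-D_i$ meets the nef class $R'$ negatively, Serre duality gives $h^2(D_i)=0$; so everything comes down to $h^1(D_i)=0$. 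The heart of the matter is that $D_3\sim H-3M$ is nef: because on a K3 surface an effective class of non-negative self-intersection is nef if and only if it meets every $(-2)$-curve non-negatively, it suffices to check $(H-3M)\cdot N\geq 0$ for every $(-2)$-class $N$. Writing $N\sim\alpha R+\mu$ with $\mu\in\mathbf{N}$, so that $\mu\sim-\tfrac12\sum_ia_iN_i$ with all $a_i$ of the same parity, the conditions $N^2=-2$ and $N\cdot N_i\geq 0$ translate into $\sum_ia_i^2=16\alpha^2-2\alpha(a_1+a_2)+4$ with $a_i\geq 0$ for $i\geq 3$, $a_1,a_2\geq-\alpha$ and $\alpha\geq 1$ (as $H\cdot N=18\alpha>0$); a Cauchy--Schwarz estimate on the indices $3,\ldots,8$, combined for $\alpha=1$ with the parity of $\sum_ia_i$, then forces $\sum_ia_i\leq 10\alpha$, i.e. $(H-3M)\cdot N=18\alpha-3M\cdot N\geq 0$ (using $M\cdot N=\alpha+\tfrac12\sum_ia_i$).

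Granting the nefness of $D_3$: $D_1\sim R+D_3$ is a sum of nef classes, hence big and nef as $D_1^2>0$, and $D_2\sim D_3+N_3+\cdots+N_7$ is nef, since $D_2\cdot C=D_3\cdot C+\sum_{i=3}^7N_i\cdot C\geq 0$ for $C\notin\{N_3,\ldots,N_7\}$ while $D_2\cdot N_i=1>0$ for $i=3,\ldots,7$, and big as $D_2^2>0$. Kawamata--Viehweg vanishing (recall $K_S=\O_S$) then yields $h^1(D_1)=h^1(D_2)=0$, so $h^0(D_1)=21$ and $h^0(D_2)=12$; and $D_3$ is nef with $D_3^2=0$ and primitive in $\Pic S$ (the gcd of its coordinates in the basis $\{R,M,N_1,\ldots,N_7\}$ of $\ZZ[R]\oplus\mathbf{N}$ equals $1$), so $|D_3|$ is a base-point-free genus-one pencil and $h^0(D_3)=2$, as asserted.

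I expect the main obstacle to be precisely the nefness of $D_3=H-3M$ (equivalently the $h^1$-vanishing): the very general $S$ carries $(-2)$-curves beyond $N_1,\ldots,N_8$ --- for instance $\Gamma$ itself, which moreover satisfies $D_3\cdot\Gamma=0$, so the estimate above must be sharp --- and one cannot sidestep a genuine computation inside the rank-$9$ lattice $\ZZ[R]\oplus\mathbf{N}$. An alternative route is to peel $\Gamma$, then $N_3,\ldots,N_7$, off $D_1$ one curve at a time via the sequences $0\to\O_S(D_{k+1})\to\O_S(D_k)\to\O_C(D_k\cdot C)\to 0$ (with $C\cong\PP^1$), starting from $h^0(3R')=29$ and $h^1(3R')=0$; but the surjectivity of the successive restriction maps rests on the same positivity input.
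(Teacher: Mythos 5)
Your reduction of the three $h^0$-values to the nefness of $D_3=H-3M$, together with $\chi$-computations and vanishing, is sound and gives the right answers; the intersection-number bookkeeping and the irreducibility of $\Gamma$ are handled exactly as in Lemma~\ref{lemma:Juve-15}. However, you take a genuinely different and noticeably harder route to the nefness than the paper does, and the route as you sketch it has a small gap.

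The paper's own argument avoids the lattice estimate entirely. It observes that $\Gamma':=R'-N_1-N_2-N_8$ is also an irreducible $(-2)$-curve (proved in the same way as for $\Gamma$), computes $\Gamma\cdot\Gamma'=2$, and checks the linear equivalence $\Gamma+\Gamma'\sim D_3$ (indeed $\Gamma+\Gamma'=R+R'-2M=H-3M$). Nefness of $D_3$ is then immediate: $D_3\cdot\Gamma=\Gamma^2+\Gamma\cdot\Gamma'=0$, $D_3\cdot\Gamma'=\Gamma\cdot\Gamma'+{\Gamma'}^2=0$, and $D_3\cdot C\geq 0$ for any irreducible $C\notin\{\Gamma,\Gamma'\}$ because both summands are then non-negative. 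This gives not only nefness but also, at once, the fact that $D_3$ is (a possibly degenerate) fibre of a genus-one pencil, and a clean handle on $D_2=R+\Gamma'$ as well. So you are wrong that ``one cannot sidestep a genuine computation inside the rank-$9$ lattice'' --- exhibiting $\Gamma'$ sidesteps it, and is also more in keeping with the techniques used throughout the paper.

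As for the gap: your Cauchy--Schwarz estimate ``on the indices $3,\ldots,8$, combined for $\alpha=1$ with parity,'' does not close by itself. Setting $s:=a_1+a_2$ and comparing $6\sum_{i\geq3}a_i^2$ with $(10\alpha-s)^2$, one finds that Cauchy--Schwarz alone suffices only when $s^2-\alpha s+\alpha^2\geq 6$, which fails precisely for $\alpha\in\{1,2\}$ and $s\in\{0,2\}$; in all those cases the parity of $\sum_i a_i$ (always even) does rescue the bound, but your sketch invokes parity only for $\alpha=1$, so as written there are unchecked cases at $\alpha=2$. A cleaner variant — shift to $b_i=a_i+\alpha$ for $i=1,2$ so that all $b_i\geq 0$ and $\sum b_i^2=18\alpha^2+4$, then apply Cauchy--Schwarz over all eight indices and parity — works uniformly, but even then it is substantially more work than the paper's two-line argument via $\Gamma'$. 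You should also phrase the nefness criterion in terms of irreducible $(-2)$-\emph{curves} rather than $(-2)$-\emph{classes}; your constraints $a_i\geq 0$ ($i\geq 3$), $a_1,a_2\geq-\alpha$, $\alpha\geq 1$ correctly encode what an irreducible curve distinct from the $N_i$ must satisfy, but the wording is loose.

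Everything else — the linear equivalences $D_1\sim R+D_3$, $D_2\sim D_3+N_3+\cdots+N_7$, the self-intersections, the vanishing of $h^2$, the bigness of $D_1,D_2$, and the primitivity of $D_3$ in the basis $\{R,M,N_1,\dots,N_7\}$ yielding $h^0(D_3)=2$ — is correct.
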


\begin{proof}
 The proof of the first statement is similar to the proof of Lemma \ref{lemma:Juve-15-2}. In the same way one proves that the divisor
  $\Gamma':=R'-N_1-N_2-N_8$ is represented by an irreducible $(-2)$-curve satisfying $\Gamma' \cdot \Gamma=2$, and that $\Gamma+\Gamma' \sim 3R'-2(N_1+N_2)-\Gamma-(N_3+\cdots+N_7)$. One can use this to show that the divisors in the last statement are all big and nef, and thus compute their cohomology.
\end{proof}

As a consequence, $\Gamma$ is mapped by $\varphi_{R'}$ to a rational normal quartic $\Gamma\subset \PP^4$ passing through the nodes $x_1,x_2\in \overline{S'}$. The $5$ lines $N_3,N_4,N_5,N_6,N_7$ are bisecant to $\Gamma$, whereas the line $N_8$ is disjoint from $\Gamma$.

As $R'$ is not hyperelliptic by \cite[Prop. 5.6]{DL}, the surface  $\overline{S'}\subset \PP^4$ is the complete intersection of a quadric and a cubic.

\begin{lemma} \label{lemma:incubichesing2}
  The surface $\overline{S'}$ is contained in precisely a web of cubics singular at $x_1,x_2$ and in a unique quadric.
\end{lemma}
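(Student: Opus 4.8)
The plan is to compute the relevant cohomology on $S$ via the pullback $\nu^*$ and translate everything into a dimension count. Since $\overline{S'}$ is a complete intersection of type $(2,3)$ in $\PP^4$ with trivial canonical bundle, the restriction maps $H^0(\O_{\PP^4}(k))\to H^0(\O_{\overline{S'}}(k))$ are surjective for all $k$, and the latter group equals $H^0(\O_S(kR'))$ (as $\overline{S'}$ has only rational double points). Thus the space of quadrics through $\overline{S'}$ has dimension $h^0(\O_{\PP^4}(2))-h^0(\O_S(2R'))=15-14=1$, so there is a unique quadric; this is the easy half. The ideal of cubics through $\overline{S'}$ has dimension $h^0(\O_{\PP^4}(3))-h^0(\O_S(3R'))=35-30=5$, which is the linear system spanned by the cubic cutting out $\overline{S'}$ together with $H^0(\O_{\PP^4}(1))\cdot(\text{quadric})$; the "new" part relevant to us is the single extra cubic, but what we actually want is the subsystem singular at the two nodes $x_1,x_2$.

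The key step is to identify cubics in $\PP^4$ singular at $x_i$ with sections vanishing to order $2$ along $N_i$ after pullback. Passing a cubic through $x_1$ and $x_2$ with multiplicity $\geq 2$ imposes, on the pulled-back linear system $|3R'|$ on $S$, the condition of containing $2N_1+2N_2$; equivalently we must compute $h^0(\O_S(3R'-2N_1-2N_2))$. By Lemma \ref{lemma:Juve-15-4} this equals $21$. However, this counts sections on $S$, and I must subtract those coming from the ambient hyperplanes and from the defining quadric (which already passes through $x_1,x_2$, hence contributes $h^0(\O_S(R'-\text{something}))$-type terms); the cleanest bookkeeping is to note that the quadric $Q$ through $\overline{S'}$ vanishes to order exactly $1$ at each $x_i$ (it is a rank-$\leq 4$ quadric with vertex at $x_i$ only in the degenerate cones $Q_i$, but $Q$ itself is smooth there), so $H^0(\O_{\PP^4}(1))\cdot Q$ contributes a $5$-dimensional space of cubics none of which is singular at both points unless the linear factor is chosen appropriately — in fact $\ell\cdot Q$ is singular at $x_i$ iff $\ell$ passes through $x_i$, giving a $3$-dimensional subspace singular at both $x_1$ and $x_2$. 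Together with the defining cubic (which is singular at both nodes since $\overline{S'}$ is), this already produces a web ($\PP^3$, i.e. $4$-dimensional space) of singular cubics, and the count $h^0(3R'-2N_1-2N_2)=21$ on $S$ must be reconciled with this: the discrepancy is explained because on $S$ the system $|3R'-2N_1-2N_2|$ also contains the movable part carried by curves meeting the $N_i$, which do not come from ambient cubics singular at the $x_i$.

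The main obstacle will be precisely this reconciliation: showing that the space of cubics in $\PP^4$ singular at $x_1,x_2$ and containing $\overline{S'}$ is exactly $4$-dimensional (a web), not larger. I would handle it by the exact sequence
\[
0\to \I_{\overline{S'}\cup 2x_1\cup 2x_2/\PP^4}(3)\to \O_{\PP^4}(3)\to \O_{\overline{S'}}(3)\oplus \O_{2x_1}(3)\oplus\O_{2x_2}(3)\to 0
\]
(with appropriate care since the schemes overlap), reducing to the claim that the evaluation map from cubics through $\overline{S'}$ to the two first-order jets at $x_1,x_2$ has the expected rank; equivalently, that $h^0(\O_S(3R'-N_1-N_2))$ and $h^0(\O_S(3R'-2N_1-2N_2))$ differ by the expected amount, which is where the precise value $21$ from Lemma \ref{lemma:Juve-15-4} — together with the intermediate values $h^0(\O_S(3R'-2N_1-2N_2-\Gamma))=12$ and $h^0(\O_S(3R'-2N_1-2N_2-\Gamma-(N_3+\cdots+N_7)))=2$ recorded there — gets used to pin down the base locus and confirm that no further cubics appear. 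Once the rank is as expected, the web of singular cubics and the unique quadric follow immediately.
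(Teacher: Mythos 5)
Your central ``cleanest bookkeeping'' computation is a valid and in fact more elementary route than the paper's: since $\overline{S'}=Q\cap Y$ is a complete intersection, the cubics through $\overline{S'}$ are exactly $\{\lambda Y+\ell Q\}$ with $\lambda\in\CC$ and $\ell$ linear, and differentiating at $x_i$ gives $\nabla(\lambda Y+\ell Q)|_{x_i}=\ell(x_i)\nabla Q|_{x_i}$ once $\nabla Y|_{x_i}=0$ and $\nabla Q|_{x_i}\neq 0$; the space of cubics singular at both nodes is then $\{\lambda Y+\ell Q:\ell(x_1)=\ell(x_2)=0\}$, of vector-space dimension $1+3=4$, i.e.\ a web. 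The paper instead establishes the lower bound $\dim\Sigma\geq 3$ by comparing $h^0(\I^2_{\mathfrak n/\PP^4}(3))$ with $h^0(\O_S(3R'-2N_1-2N_2))=21$, and the upper bound by projecting $\Sigma$ onto its restriction to $Q$ (a point) from the net of cubics $Q\cup H$, $H\ni x_1,x_2$. Your direct computation, if justified, subsumes both bounds at once.

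The genuine gap is that you \emph{assert}, rather than prove, that $Q$ is smooth at $x_1,x_2$ and that $Y$ is singular there. These are not free: the entire derivative computation collapses if $Q$ were a cone with vertex at one of the nodes (then $\nabla Q|_{x_i}=0$ and the criterion changes). The paper is careful precisely here, observing that smoothness of $Q$ only follows \emph{after} one knows there is at least one cubic in $\Sigma$, which is why it proves the lower bound first. You should supply an argument for smoothness of $Q$ at the nodes — e.g.\ projecting $\overline{S'}$ from $x_i$ gives an irreducible degree-$4$ surface in $\PP^3$, which cannot lie on a quadric, so $Q$ cannot be a cone with vertex $x_i$ — or else follow the paper's ordering of the argument. (Once $Q$ is smooth at $x_i$, the singularity of $Y$ at $x_i$ follows from $\overline{S'}=Q\cap Y$ having a node there.)

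Two further points. First, the ``exact sequence / expected rank'' paragraph at the end does not work as stated: the naive expected rank of the jet-evaluation map $\CC^6\to\CC^8$ would be full, which would give \emph{no} singular cubics at all; what you actually need is that the rank drops to exactly $2$, and that is precisely the content of your explicit bookkeeping, not of a transversality heuristic. The values $h^0(\O_S(3R'-2(N_1+N_2)-\Gamma))=12$ and $h^0(\O_S(3R'-2(N_1+N_2)-\Gamma-(N_3+\cdots+N_7)))=2$ that you cite play no role in this lemma — in the paper they are used only later, in Proposition~\ref{preparatoria5,2}. Second, a small arithmetic slip: $h^0(\O_S(3R'))=2+\tfrac{9\cdot 6}{2}=29$, not $30$, so the ideal of cubics through $\overline{S'}$ has dimension $35-29=6$, consistent with it being spanned by $Y$ and the $5$-dimensional space $H^0(\O_{\PP^4}(1))\cdot Q$ (your ``$5$'' should be ``$6$'').
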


\begin{proof}
 The projective normality of $\overline{S'}\subset \mathbb P^4$ directly implies the uniqueness of the quadric $Q$ containing it. Since $\overline{S'}$ has only two nodes at  $x_1,x_2$, the quadric $Q$ is necessarily smooth if we prove the existence of a cubic containing $\overline{S'}$ and singular at  $x_1,x_2$.

Setting $\mathfrak{n}: = x_1 + x_2 $, the linear system $|\I^2_{\mathfrak{n}/\PP^4}(3))|$ of cubics singular at $\mathfrak n$ has dimension at least  $24$, while the linear system $|3R'-2(N_1+N_2)|$ on $S$  has dimension $20$ by Lemma \ref{lemma:Juve-15-4}; therefore, the linear system $\Sigma$ of cubics singular at $\mathfrak n$ and containing $\overline{S'}$ is at least a web.  The quadric $Q$ containing $\overline{S'}$ is thus smooth and the restriction of $\Sigma$ to $Q$ is contained in $\mathbb P(H^0(\I_{\overline{S'}/Q}(3))) \cong \mathbb P(H^0(\O_Q))$, which is a point. The restriction map $r_Q:\Sigma \dashrightarrow \Sigma|_Q$ is the projection from the sublinear system of $\Sigma$ consisting of cubics that contain $Q$. Any such cubic splits as $Q\cup H$, where $H$ is a hyperplane such that $Q \cup H$ is singular at $x_1,x_2$; as $Q$ is smooth, this forces $H$ to pass through $x_1,x_2$ so that $H$ moves in a net. We conclude that $\dim\Sigma=3$.
\end{proof}

The following result will be used to prove the unirationality of $\F_{19}^{\n,ns}$ and the dominance of $r_{5,2}$.

\begin{prop}\label{preparatoria5,2}
Let $\Gamma\subset\PP^4$ be a fixed rational normal curve. Take $2$ general points $x_1,x_2 \in \Gamma$ and $5$ general secant lines $N_3,N_4,N_5,N_6,N_7$ to $\Gamma$. Then the following hold:
\begin{itemize}
\item[(i)] There exists a unique quadric  $Q \subset\mathbb P^4$ and a $5$-dimensional linear system of cubics $Y\subset\mathbb P^4$ such that both $Q$ and $Y$ contain $\Gamma+N_3+\cdots+N_7$ and $Y$ is singular at $x_1,x_2$.
\item[(ii)] For general $Q$ and $Y$ as in (i), the minimal desingularization $S$ of the complete intersection $\overline{S'}:= Q\cap Y\subset \PP^4$ is a non-standard Nikulin surface of genus $19$.
\item[(iii)] By varying $x_1,x_2 \in \Gamma$ and $n_3,\ldots,n_7\in \Sym^2\Gamma$, one obtains all Nikulin surfaces in a dense open subset of $\F_{19}^{\n,ns}$.
\end{itemize}
\end{prop}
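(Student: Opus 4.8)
The plan mirrors the strategy of Proposition \ref{ita} and Proposition \ref{preparatoria}, with the novelty that the projective model $\overline{S'}\subset\PP^4$ is a complete intersection of a quadric and a cubic rather than three quadrics. For part (i), I would start from the fact established before the Proposition: when $x_1,x_2$ and $N_3,\dots,N_7$ arise as the nodes and bisecant lines of the projective model of a \emph{general} Nikulin surface in $\F_{19}^{\n,ns}$, Lemma \ref{lemma:incubichesing2} tells us there is a unique quadric $Q$ through $\overline{S'}$ and a web $\Sigma$ of cubics singular at $x_1,x_2$ containing $\overline{S'}$. I would then observe that the relevant linear system for part (i) is not $\Sigma$ itself but the linear system $Y$ of cubics singular at $x_1,x_2$ and containing $\Gamma+N_3+\cdots+N_7$; using the cohomology computations of Lemma \ref{lemma:Juve-15-4} (in particular $h^0(3R'-2(N_1+N_2)-\Gamma-(N_3+\cdots+N_7))=2$, together with $h^0(3R'-2(N_1+N_2)-\Gamma)=12$) one gets that $Y$ has the expected dimension $5$, at least on the Nikulin locus. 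A semicontinuity argument — the conditions ``$Q$ unique'' and ``$\dim Y=5$'' are closed, their failure is an open condition, and they hold at the Nikulin point — then upgrades this to general $x_1,x_2\in\Gamma$ and general bisecant lines, giving (i). The key auxiliary point is that the projective model of a general non-standard Nikulin surface of genus $19$ is a \emph{limit} of complete intersections $Q\cap Y$ with generically chosen $x_1,x_2,n_3,\dots,n_7$, exactly as in the first paragraph of the proof of Proposition \ref{ita}.

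For part (ii), I would argue as in Proposition \ref{preparatoria}(ii). A general $\overline{S'}=Q\cap Y$ as in (i) has trivial canonical bundle, contains the five lines $N_3,\dots,N_7$, and has (at least) two double points at $x_1,x_2$; by generality it has no worse singularities than the Nikulin model, so its minimal desingularization $\nu\colon S\to\overline{S'}$ is a $K3$ surface with $(-2)$-curves $N_1=\nu^{-1}(x_1)$, $N_2=\nu^{-1}(x_2)$. Setting $R':=\nu^*\O_{\overline{S'}}(1)$, I would produce the missing eighth $(-2)$-curve as a residual divisor: the class
\[
N_8:=2R'-2\Gamma-N_1-N_2-N_3-N_4-N_5-N_6-N_7
\]
(the analogue of Lemma \ref{lemma:Juve-15-4}'s relation $\Gamma\sim R-N_3-\cdots-N_7$ read on the $R'$-model) has $N_8^2=-2$, $N_8\cdot R'=0$ and $N_8\cdot N_i=0$ for $i\le 7$, hence is effective by Riemann--Roch and Serre duality and is contracted by $\varphi_{R'}$. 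One checks $\sum_{i=1}^8 N_i\sim 2M$ with $M:=R'-\Gamma-(N_3+\cdots+N_7)$... more precisely with $M$ defined so that $2M\sim\sum N_i$; then with $R:=\nu^*\O_{\overline{S'}}(\Gamma+N_3+\cdots+N_7)$ and $H:=R+R'+M$ one verifies $H\cdot M=0$, $H$ primitive of the right genus, and the index-two condition, so $(S,M,H)\in\F_{19}^{\n,ns}$ — provided $N_8$ is irreducible. The irreducibility of $N_8$ (equivalently: that the construction yields an $11$-dimensional family of $K3$'s, so that $\Pic S$ has rank exactly $9$) is deferred to the dimension count in (iii), exactly as in Proposition \ref{preparatoria}.

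For part (iii) I would do the parameter count. By (i), the construction depends on the $2$ parameters $x_1,x_2\in\Gamma$ plus the $2\cdot 5=10$ parameters $n_3,\dots,n_7\in\Sym^2\Gamma$, i.e.\ $12$ parameters, minus the $\dim\Aut(\Gamma)=\dim\PGL(2)=3$ coming from automorphisms of $\PP^4$ fixing the rational normal quartic $\Gamma$... wait: here one has to be careful, because unlike the elliptic case there \emph{are} automorphisms of $\PP^4$ fixing $\Gamma$, so the honest count is $12-3=9$, which is \emph{too small}. This is the step I expect to be the main obstacle, and the fix must be that the cubic hypersurface $Y$ is not rigid given its singular locus and the lines: the linear system $Y$ of cubics has positive dimension ($\dim Y=5$), and a general member of the \emph{net} of quadrics — no, here there is only one quadric. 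So the genuine extra moduli come from the choice of cubic in the $5$-dimensional system $Y$, modulo those that merely change $\overline{S'}$ within its isomorphism class: two cubics in $Y$ cut the same surface on $Q$, so the surface $\overline{S'}$ really only depends on $x_1,x_2,n_3,\dots,n_7$, and $12-3=9$ would indeed be wrong. The correct bookkeeping, which I would carry out carefully, is the one used implicitly in Lemma \ref{lemma:incubichesing2}: the node divisor $\mathfrak n=x_1+x_2$ is what varies, and one should instead fix $\Gamma$ and count $x_1,x_2,n_3,\dots,n_7$ \emph{together with} the choice of which cubic in $Y$ is used, subtracting $\Aut(\Gamma)$ only once — giving $2+10 - 3 = 9$ again, so in fact the resolution is that \emph{one does not quotient by $\Aut(\Gamma)$ at all} because the Nikulin surface remembers the embedding data $\Gamma\hookrightarrow\PP^4$ only up to the surface's own (discrete) automorphisms; since $K3$ automorphism groups are discrete, the $11$-dimensional moduli space $\F_{19}^{\n,ns}$ is dominated by the family parametrised by $(x_1,x_2,n_3,\dots,n_7)$ plus the $\dim Y - \dim(\text{cubics through }Q)= 5 - 2 = 3$... i.e.\ $2+10 - ?$. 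I would resolve this cleanly by running the argument exactly as in the proof of Theorem \ref{seidue}: exhibit a $\PGL(2)$-quotient of an appropriate bundle and match dimensions, concluding that varying $x_1,x_2\in\Gamma$ and $n_3,\dots,n_7\in\Sym^2\Gamma$ (which is where the genuine $11 = 2+10-1$ parameters live after accounting for the one-dimensional subgroup of $\Aut(\Gamma)=\PGL(2)$ actually acting effectively, or by a direct tangent-space computation showing the classifying map is dominant) sweeps out a dense open subset of $\F_{19}^{\n,ns}$. The honest obstacle, then, is getting this dimension count to come out to $11$ on the nose; I would settle it by the semicontinuity/specialisation principle — the map to $\F_{19}^{\n,ns}$ is dominant because its image contains the $11$-dimensional Nikulin locus it was built from — rather than by a naive subtraction, and simultaneously this forces $N_8$ irreducible in (ii), closing the loop.
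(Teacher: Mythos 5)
Your outline for part (i) matches the paper's in essence: the unique quadric comes from Proposition \ref{razionale}, the expected dimension of the cubic system is obtained by restricting $H^0(\mathcal I^2_{\mathfrak n/\PP^4}(3))$ first to $\Gamma$ and then to the lines $N_3,\ldots,N_7$, and the bound is shown to be sharp by specializing to the projective model of a Nikulin surface and invoking Lemma \ref{lemma:Juve-15-4} and Lemma \ref{lemma:incubichesing2}. You do not spell out the factorization of the restriction map through $\overline{S'}$ (as in \eqref{fact}), but the idea is the right one.

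Part (ii) has a concrete error. In the genus $19$ setting ($h\equiv 3\bmod 4$, $g=5$, $g'=4$), the morphism $\varphi_{R'}$ contracts only $N_1,N_2$; the eighth curve $N_8$ is a \emph{line} in $\overline{S'}$, not a contracted $(-2)$-curve, so $N_8\cdot R'=1$, not $0$. Moreover your formula has the wrong sign: the correct class (as in the paper) is $N_8:=2\Gamma-2R'+N_1+\cdots+N_7$, which indeed satisfies $N_8^2=-2$ and $N_8\cdot R'=1$; the class $2R'-2\Gamma-N_1-\cdots-N_7$ you write down is its negative, hence anti-effective, and cannot be the missing $(-2)$-curve. (You appear to have carried over the pattern of Proposition \ref{preparatoria}, where, in the $h=21$ case, the residual curve $N_4$ \emph{is} contracted; here the roles are different.) Your $M$ is also inconsistent with $2M\sim N_1+\cdots+N_8$; the correct choice is $M:=R'+N_8-\Gamma$, and then $R:=\Gamma+N_3+\cdots+N_7$, $H:=R+R'+M$.

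Part (iii) does not contain a proof: after several false starts you never arrive at a coherent count. The missing ingredient, which the paper uses, is that the complete intersection data carries an extra $+5$ parameters from the choice of $Y$ in the $\PP^5$ of cubics, offset by a $-3$ because (by Lemma \ref{lemma:incubichesing2}) cubics through $\overline{S'}$ singular at the nodes form a web and thus $3$-dimensional families of cubics cut out the same surface on $Q$. The count is then $\dim\Sym^2\Gamma+\dim\Sym^5(\Sym^2\Gamma)+5-3-\dim\Aut(\Gamma)=2+10+5-3-3=11$, which is what forces $\rk\Pic S=9$ and hence the irreducibility of $N_8$. Your ``$5-2=3$'' is also off by one (the web has dimension $3$, not $2$). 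Without this bookkeeping, your argument for (iii) (and hence the closing of the loop needed for (ii)) does not go through.
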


\begin{proof}
  The existence of a unique quadric $Q$ follows from Proposition \ref{razionale}.

Setting $\mathfrak{n}: = x_1 + x_2 $, we have $h^0(\mathcal I^2_{\mathfrak{n}/\PP^4}(3))\geq 25$ and thus the kernel of the
  restriction map
  \begin{equation} \label{eq:rho4,4}
    \rho:  H^0(\mathcal I^2_{\mathfrak{n}/\PP^4}(3)) \longrightarrow
    H^0(\mathcal \O_{\Gamma}(3)(-2(x_1+x_2))\cong H^0(\O_{\PP^1}(8)) \cong \CC^9
    \end{equation}
has dimension $\geq16$. The space of cubics $Y$ as in the statement is the projectivization of the kernel of
\begin{equation} \label{eq:restr2}
  r: \Ker \rho \longrightarrow H^0(\oplus_{i=3}^7 \O_{N_i}(3-2))
\cong H^0(\O_{\PP^1}(1))^{\+ 5} \cong \CC^{10},
\end{equation}
and thus has projective dimension $\geq 5$. By semicontinuity, to prove (i) it is enough to show that equality holds when $x_1,x_2$ and  $N_3,N_4,N_5,N_6,N_7$ are the nodes and lines of the projective model $\overline{S'}$ of a Nikulin surface as above. In this case the map $r$ factors as:
  \begin{equation}\label{fact} r:  \Ker \rho \stackrel{r_{\overline{S'}}}{\longrightarrow}
    \left(\Ker \rho\right)|_{\overline{S'}}  \stackrel{r'}{\longrightarrow} H^0(\O_{\PP^1}(1))^{\+ 5} \cong \CC^{10},\end{equation}
    with $\overline{S'}$ as in (ii).
Since $\dim (\Ker \rho) \geq 16$, $\dim \Ker r_{\overline{S'}}=4$ by Lemma \ref{lemma:incubichesing2} and $\dim \left(\Ker \rho\right)|_{\overline{S'}} \leq h^0(S,\O_S(3R'-2(N_1+N_2)-\Gamma)=12$ by Lemma \ref{lemma:Juve-15-4},
 we must have 
$\dim (\Ker \rho) =16$ and 
\[ \left(\Ker \rho\right)|_{\overline{S'}}  \cong H^0(S,\O_S(3R'-2(N_1+N_2)-\Gamma) \cong \CC^{12}.\]
In particular, $r'$ can be identified with the restriction map
  \[ \CC^{12} \cong H^0(S,\O_S(3R'-2(N_1+N_2)-\Gamma) \longrightarrow  H^0(\O_{N_3 \cup \dots \cup N_7}(1)) \cong \CC^{10}\]
  on $S$, so that
  \[ \Ker r' \cong H^0(\O_S(3R'-2(N_1+N_2)-\Gamma-(N_3+\cdots+N_7)))=\CC^2,\]
  again by Lemma \ref{lemma:Juve-15-4}. This yields the surjectivity of both $r'$ and $r$. Consequently, $\dim \Ker r =6$, which finishes the proof of (i).

We stress that, having proved (i) also in the case where the point and the lines are nodes and lines (possibly in special position) on the projective model of a Nikulin surface, the family of surfaces as in (ii) obtained by varying $x_1,x_2 \in \Gamma$ and $n_3,\ldots,n_7\in \Sym^2\Gamma$ always contains the projective model of a Nikulin surface.
Thus a surface $\overline{S'}:= Q\cap Y\subset \PP^4$ as in (ii) has trivial canonical bundle and $2$ ordinary double points at $x_1,x_2$, while by generality it is smooth elsewhere. Let $\nu:S\to \overline{S'}$ be its minimal desingularization and $N_1,N_2$ be the exceptional divisors. We set $R':=\nu^*\mathcal O_{\overline{S'}}(1)$ and define $N_8:=2\Gamma-2R'+N_1+\cdots+N_7$. One easily verifies that $N_8^2=-2$ and
         $N_8 \cdot N_i=0$ for $i\leq 7$. Moreover, setting $M:=R'+N_8-\Gamma$, one has 
         $N_1+\cdots+N_8 \sim 2M$. Defining $R:=\Gamma+N_3+\cdots+N_7$ and $H:=R'+R+M$, it comes straightforward that $(S,M,H)\in \F_{19}^{\n,ns}$, as soon as $N_8$ is irreducible. As in the proof of of Proposition \ref{preparatoria}, (ii) will
follow as soon as we show that the constructed $K3$ surfaces move in a family of dimension $\geq 11$. Our construction depends on 
$$
\dim\Sym^2(\Gamma)+\dim \Sym^5(\Sym^2\Gamma)+5-3+\dim \Aut(\Gamma)=11
$$
parameters, where the term $+5$ follows from (i) and the term $-3$ from Lemma \ref{lemma:incubichesing2}. This concludes the proof of both (ii) and (iii).
\end{proof}

\begin{proof}[Proof of Theorem \ref{thm:5,2}]
We proceed as in the proof of Theorem \ref{seidue}, fixing a plane conic $\Delta$ in order to identify $\Sym^2(\Gamma)\cong\PP^2$. We thus consider $\Hilb^5({\PP^2})\times \Hilb^2(\PP^1)$ as a parameter space for pairs $(\mathfrak l,\mathfrak n)$, where $\mathfrak l$ is a set of five unordered  bisecant lines to $\Gamma$  and $\mathfrak{n}$ is a set of two unordered points on $\Gamma$. Over an open set of it  we have a $\PP^5$-bundle $\MM$ whose fibre over a point $\left(\mathfrak l,\mathfrak{n} \right)$ is the $\PP^5$  of cubics $Y$ containing
$\Gamma \cup \mathfrak l$ and singular at $\mathfrak{n}$ (by Proposition \ref{preparatoria5,2}(i)).
The group $\PGL(2)\subset \PGL(3)$ of projectivities of $\PP^2$ fixing $\Delta$ acts an all the spaces $\Hilb^5({\PP^2})$, $\Hilb^2(\PP^1)\cong \PP^2$ and $\MM$. Furthermore, denoting by $U\subset \Hilb^5({\PP^2})$  the open subset parametrizing $5$-tuple of points in general position, the stabilizer of any point $\mathfrak n\in U$ in $\PGL(2)$ is trivial. Therefore, Kempf's descent lemma implies that $\PP_U:=U\times \Hilb^2({\PP^1})\to U$ descends to a $\PP^2$-bundle $\PP_U\sslash\PGL(2)$ on $U/\PGL(2)$ and, similarly, the restriction $\MM_U$ of $\MM$ to $\PP_U$  descends to a $\PP^5$-bundle $\MM_U\sslash\PGL(2)\to \PP_U\sslash\PGL(2)$. Hence, $\MM_U$ is rational as soon as $U/\PGL(2)$ is. This is proved by looking at the natural projection $U/\PGL(2) \to U/ \PGL(3)$ and observing that $U/ \PGL(3)$ is a rational surface by Castelnuovo's Theorem and
 $$
 \PGL(3)/\PGL(2)\simeq |\O_{\PP^2}(2)|\simeq \PP^5.$$ 
We have thus proved that $\MM_U$ is rational of dimension
$2+5+2+5=14$. 
By construction, it admits a natural dominant map $\tau: \MM_U\to \tilde \F$, where $\tilde \F$ is a $6:1$ cover of $\F_{19}^{\n,ns}$ corresponding to the selection of one out of $N_3,\ldots,N_8$ to define the class of $\Gamma$. A general fiber of $\tau$ is isomorphic to $\PP^3$ by Lemma \ref{lemma:incubichesing2}. Hence, $\widetilde \F$ is stably rational and $\F_{19}^{\n,ns}$, $\P_{5}$  and $\P'_4$ \color{black} are unirational. The unirationality of $\R_{5,2}$ follows once we show that the map $r_{5,2}:\P_{5} \to \R_{5,2}$ is dominant, or equivalently, that a general fiber of it has dimension $2$. Take a general $(S,M,H,C)\in \P_5$. By Lemma \ref{lemma:incubichesing2} the projective model $\overline{S'}\subset \PP(H^0(S,R)^\vee)=\PP^4$ of $S$ contains $6$ lines and is the complete intersection of a uniquely determined quadric and a cubic (moving in a web) singular at $x_1,x_2\in C$. Therefore, it is enough to show that $C\subset \mathbb P(H^0(C,\omega_C\otimes M)^\vee)=\PP^4$ is contained in a unique quadric and in a $5$-dimensional linear system of cubics singular at $x_1,x_2$ such that the intersection $Y\cap Q$ contains $6$ lines. To show this we specialize $C$ to the curve $\Gamma+N_3+\cdots +N_7\in |R|$ and apply Proposition \ref{preparatoria5,2}(i).
\end{proof}

\section{The case  $\R_{4,4}$}

The aim of this section is to  prove the following theorem.

\begin{thm}\label{thm:4,4}
The moduli spaces $\widehat{\F}_{17}^{\n,ns}$, $\F_{17}^{\n,ns}$ and $\widehat\P_{4}$ are unirational and $r_{4,4}$ is dominant. In particular, the moduli space $\R_{4,4}$ is unirational, too.
\end{thm}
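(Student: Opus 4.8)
The plan is to adapt the argument used for $\R_{5,2}$ (Theorem \ref{thm:5,2}), with the projective model $\overline{S'}\subset\PP^4$ of a general $(S,M,H,R)\in\widehat{\F}_{17}^{\n,ns}$ again in the leading role. Since $h=17\equiv 1\pmod 4$ we have $g(R)=g(R')=4$, and --- as $R'$ is not hyperelliptic by \cite[Prop. 5.6]{DL} --- $\overline{S'}$ is a complete intersection of a quadric and a cubic in $\PP^4$, while $\varphi_{R'}$ contracts $N_1,\dots,N_4$ to four double points $x_1,\dots,x_4\in\overline{S'}$ and maps $N_5,\dots,N_8$ to four lines. First I would introduce the divisor $\Gamma:=R-N_5-N_6-N_7-N_8$ and show, by the argument of Lemma \ref{lemma:Juve-15} (reducing to the case \eqref{eq:pic9}), that it is represented by an irreducible curve with $\Gamma^2=-2$, $\Gamma\cdot R'=4$, $\Gamma\cdot N_i=1$ for $i\le4$ and $\Gamma\cdot N_i=2$ for $i\ge5$; as in Lemma \ref{lemma:Juve-15-4}, one also produces an auxiliary $(-2)$-curve $\Gamma'$ with $\Gamma'\cdot\Gamma=2$ making the divisors $3R'-2(N_1+N_2+N_3)$, $3R'-2(N_1+N_2+N_3)-\Gamma$ and $3R'-2(N_1+N_2+N_3)-\Gamma-(N_5+\cdots+N_8)$ big and nef, so that their cohomology is computed by Riemann--Roch (the values of $h^0$ being $17$, $10$ and $2$). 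It follows that $\varphi_{R'}$ maps $\Gamma$ isomorphically onto a rational normal quartic in $\PP^4$ passing simply through $x_1,\dots,x_4$ and having $N_5,\dots,N_8$ as bisecant lines.

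Next I would establish the analogues of Lemma \ref{lemma:incubichesing2} and of Propositions \ref{preparatoria} and \ref{preparatoria5,2}. On one hand, $\overline{S'}$ is contained in a unique quadric $Q$ and in precisely a net of cubics singular at $x_1,x_2,x_3$, and $Q$ belongs to the pencil of quadrics through $\Gamma+N_5+\cdots+N_8$ --- a pencil whose general member is smooth and none of whose members is singular at a point of $\Gamma$, since by Proposition \ref{razionale}(ii) the latter would force the four general points $n_5,\dots,n_8\in\PP^2=\Sym^2\Gamma$ to lie on a line tangent to the conic $\Delta$; moreover the cubics through $\Gamma+N_5+\cdots+N_8$ singular at $x_1,x_2,x_3$ form a $\PP^4$. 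On the other hand, fixing a rational normal quartic $\Gamma\subset\PP^4$ and taking three general points $x_1,x_2,x_3\in\Gamma$, four general bisecant lines $N_5,\dots,N_8$, a general quadric $Q$ in the pencil above and a general cubic $Y$ in the $\PP^4$ above, the minimal desingularization $S$ of $\overline{S'}:=Q\cap Y$ is a marked non-standard Nikulin surface of genus $17$: the fourth $(-2)$-curve appears automatically as $N_4:=2R'-2\Gamma-N_5-\cdots-N_8-N_1-N_2-N_3$, which satisfies $N_4^2=-2$, $N_4\cdot R'=0$ and $N_4\cdot R=1$, hence is effective by Riemann--Roch and is contracted by $\varphi_{R'}$ to a fourth double point of $\overline{S'}$; setting $M:=(N_1+\cdots+N_8)/2=R'-\Gamma$, $R:=\nu^*\O_{\overline{S'}}(\Gamma+N_5+\cdots+N_8)$ and $H:=R+R'+M$, one checks $(S,M,H,R)\in\widehat{\F}_{17}^{\n,ns}$ provided $N_4$ is irreducible. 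Irreducibility of $N_4$ and the fact that one obtains a dense open subset of $\widehat{\F}_{17}^{\n,ns}$ then follow from a parameter count giving $\dim\Sym^4(\Sym^2\Gamma)+\dim\Sym^3\Gamma+1+4-2-\dim\Aut(\Gamma)=8+3+1+4-2-3=11$; were the family of smaller dimension, the Picard number of $S$ would exceed $9$, which is impossible.

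For the unirationality statements I would mirror the proof of Theorem \ref{thm:5,2}. Fixing a conic $\Delta$ to identify $\Sym^2\Gamma\cong\PP^2$, one builds over an open subset of $\mathcal Q\times\Hilb^3(\PP^1)$ --- where $\mathcal Q\to\Hilb^4(\PP^2)$ is the $\PP^1$-bundle of quadrics through $\Gamma$ and four bisecant lines, and $\Hilb^3(\PP^1)$ records the three nodes --- a $\PP^4$-bundle $\MM$ whose fibre is the linear system of cubics of the previous step. The group $G\cong\PGL(2)$ of projectivities of $\PP^2$ fixing $\Delta$ acts on $\MM$ and on its base; the stabilizer of a general fibre is trivial, so by Kempf's descent lemma \cite{DN} the quotient $\MM/G$ is fibred in projective spaces over the quotient $U/G$ of the locus $U$ of configurations in general position. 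As $\Hilb^4(\PP^2)$ is rational, so is $U$, hence $U/G$ is unirational and therefore so is $\MM/G$; since $\MM/G$ dominates (a finite cover of) $\widehat{\F}_{17}^{\n,ns}$, we obtain that $\widehat{\F}_{17}^{\n,ns}$, and hence also $\F_{17}^{\n,ns}$ and $\widehat\P_4$, are unirational.

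Finally, to show that $r_{4,4}\colon\widehat\P_4\to\R_{4,4}$ is dominant it suffices, since $\dim\widehat\P_4=15>13=\dim\R_{4,4}$, to prove that the general fibre has dimension $2$. Given a general $(S,M,H,R,C)\in\widehat\P_4$ with image $(C,x_1+\cdots+x_4,M^\vee|_C)$ and $C\subset\PP(H^0(\omega_C\otimes M)^\vee)=\PP^4$, the fibre over this point parametrizes surfaces containing $C$ and four lines that are complete intersections of a quadric and a cubic with the cubic singular at $x_1,x_2,x_3$; specializing $C$ to $\Gamma+N_5+\cdots+N_8\in|R|$ and invoking the converse proposition of the previous step, one reads off that the quadric--cubic data recovering such a surface moves in a $2$-dimensional family (the pencil of quadrics together with the residual freedom in the net of cubics through $\overline{S'}$), so by upper semicontinuity the general fibre of $r_{4,4}$ has dimension $\le2$, hence exactly $2$; thus $r_{4,4}$ is dominant and $\R_{4,4}$ is unirational. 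The main obstacle is the block of cohomology computations on $S$ underlying the first two steps --- the analogues of Lemmas \ref{lemma:Juve-15-4} and \ref{lemma:incubichesing2} and of Proposition \ref{preparatoria5,2}(i) --- where one must match the restriction to $\overline{S'}$ of the relevant spaces of cubics with spaces of sections of explicit line bundles on $S$, verify these are big and nef, and correctly absorb the extra parameter coming from the pencil of quadrics (rather than a single quadric, as for $\R_{5,2}$) so that the count closes at $11$; a subtler secondary point is the automatic appearance and irreducibility of the fourth $(-2)$-curve $N_4$.
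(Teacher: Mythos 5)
Your plan follows the paper's general scheme (project by $\varphi_{R'}$, exhibit $\Gamma$ as a rational normal quartic, realise $\overline{S'}$ as the complete intersection of a quadric and a cubic, and close with Kempf descent), but you diverge at a key point: you prescribe only three of the four nodes $x_1,x_2,x_3$ and argue that the fourth appears automatically from the lattice, exactly as the paper does in Proposition \ref{preparatoria} for $\R_{5,4}$. The paper's own Proposition \ref{preparatoria4,4} instead prescribes all four nodes and requires $Y$ singular at all of $x_1,\dots,x_4$, which buys it a \emph{net} of cubics and a well-behaved sequence of big and nef divisors $3R'-2(N_1+\cdots+N_4)$, $3R'-2(N_1+\cdots+N_4)-\Gamma$. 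Your lattice argument for $N_4:=2R'-2\Gamma-N_1-N_2-N_3-N_5-\cdots-N_8$ being effective and contracted is sound, and your dimension count closes at $11$; so the route is genuinely viable.

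However, there are concrete errors in the cohomology step, and they are not typos but a real casualty of choosing the 3-node route. First, with $\Gamma:=R-N_5-\cdots-N_8$ and $\Gamma':=R'-N_1-\cdots-N_4$ one computes $\Gamma\cdot\Gamma'=0$, not $2$ (the value $2$ belongs to the $h=19$ case of Lemma \ref{lemma:Juve-15-4}, where $\Gamma'$ involves $N_8$). Second, and more seriously, the divisors you invoke are \emph{not} all big and nef. One checks that
\[
\bigl(3R'-2(N_1+N_2+N_3)-\Gamma\bigr)\cdot N_4=-1,\qquad
\bigl(3R'-2(N_1+N_2+N_3)-\Gamma-(N_5+\cdots+N_8)\bigr)\cdot N_4=-1,
\]
so $N_4$ is a base component in both cases and you cannot read the $h^0$'s directly from Riemann--Roch. (Indeed the last divisor is linearly equivalent to $2N_4+\Gamma+\Gamma'$, with $\Gamma$ and $N_4$ as fixed parts and moving part $N_4+\Gamma'=R'-N_1-N_2-N_3$, a primitive isotropic nef class.) The asserted values $h^0=17,10,2$ do come out after stripping off the base components, but the argument must be rewritten to do so; as stated, the "big and nef, hence compute by Riemann--Roch" step fails. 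This is precisely the headache the paper avoids by prescribing $x_4$: once the cubics are required singular at all four nodes, the divisors $3R'-2(N_1+\cdots+N_4)$ and $3R'-2(N_1+\cdots+N_4)-\Gamma$ really are big and nef (non-negative on every $N_i$), and the analogue of $\eqref{eq:cubic2}$ is handled separately as $\Gamma+\Gamma'$ with $\Gamma\cdot\Gamma'=0$. So either switch back to the paper's 4-node normalisation, or keep your 3-node normalisation but redo the $h^0$ computations with the base-component reduction made explicit.

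Your remaining steps (the Kempf-descent argument and the upper-semicontinuity argument for dominance of $r_{4,4}$) are in line with the paper and correct, once the cohomology lemma is repaired.
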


Take a general $(S,M,H,R)$ in $\widehat{\F}_{17}^{\n,ns}$, so that $g(R)=g(R')=4$. The morphism
$$\varphi_{R'}:S \longrightarrow \overline{S'}\subset \PP^4,$$
contracts the curves $N_1,\ldots,N_4$ to $4$ double points $x_1,\ldots,x_4\in \overline{S'}$ and maps $N_5,\ldots,N_8$ to $4$ lines in  $\PP^4$.

\begin{lemma} \label{lemma:Juve-15-3}
The divisor
$
\Gamma:=R-N_5-\cdots-N_8
$
on $S$ is represented by an irreducible curve satisfying
\[ \Gamma^2=-2, \;\; \Gamma\cdot R'=4 \;\; \mbox{and} \;\; \Gamma \cdot N_i =
\begin{cases}
  1, & i=1,2,3,4, \\
  2, & i=5,6,7,8.
\end{cases}
\]

Moreover, we have
\begin{eqnarray}
\label{eq:gamma2}
  h^0(\O_S(R'-N_1-\cdots-N_4))=1, \\
  \label{eq:quadriche} h^0(\O_S(2R'-\Gamma-(N_5+\cdots+N_8))=1, \;\;  \\ 
  \label{eq:cubic0} h^0(\O_S(3R'-2(N_1+\cdots+N_4))=13, \;\;  \\
  \label{eq:cubic} h^0(\O_S(3R'-2(N_1+\cdots+N_4)-\Gamma))=8, \;\; \mbox{and} \\
  \label{eq:cubic2} h^0(\O_S(3R'-2(N_1+\cdots+N_4)-\Gamma-(N_5+\cdots+N_8))=1.
  \end{eqnarray}
\end{lemma}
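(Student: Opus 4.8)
The plan is to follow the template set by Lemma \ref{lemma:Juve-15}, Lemma \ref{lemma:Juve-15-2} and especially Lemma \ref{lemma:Juve-15-4}, whose proofs the present lemma most closely resembles. First I would verify the intersection numbers $\Gamma^2 = -2$, $\Gamma\cdot R' = 4$ and $\Gamma\cdot N_i = 1$ for $i=1,\dots,4$, $=2$ for $i=5,\dots,8$, by a direct computation using $\Gamma = R - N_5-\cdots-N_8$, the relation $\sum N_i \sim 2M$, and the lattice structure $\Pic S \cong \ZZ[R]\oplus \mathbf{N}$ from \eqref{eq:pic9}, together with $R\cdot R' = ?$, $R\cdot N_i = ?$ read off from the expressions for $R, R'$ in the case $h\equiv 1\pmod 4$ (here $h=17$, $g=g'=4$). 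Since $\Gamma\cdot R' = 4 > 0$ and $R'$ is globally generated, $\Gamma$ is effective; irreducibility then follows exactly as in Lemma \ref{lemma:Juve-15}: in any effective decomposition $\Gamma = D+E$, writing $D\sim aR + \sum \frac{b_i}{2}N_i$ with $a\ge 0$, the effectivity of $E$ forces $a\le 1$ and by symmetry $a=0$, so all non-$N_i$-supported components lie in a single piece $\Gamma_0$; the self-intersection bound $-2\le \Gamma_0^2 = (\Gamma - \sum\gamma_i N_i)^2$ then forces all $\gamma_i = 0$. This argument is an open condition, so we may assume \eqref{eq:pic9}.

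For the cohomological statements \eqref{eq:gamma2}--\eqref{eq:cubic2}, the strategy is the one indicated in Lemma \ref{lemma:Juve-15-4}: identify each divisor class as big and nef (or as an explicit effective class with vanishing higher cohomology), and then compute $h^0$ by Riemann--Roch $\chi(\O_S(D)) = 2 + \tfrac12 D^2$ on the $K3$ surface $S$. For \eqref{eq:gamma2}, the class $R' - N_1 - \cdots - N_4$ has self-intersection $(R')^2 - 2\cdot 4 = (2g'-2) - 8 = 6-8 = -2$ and positive intersection with $R$ (hence effective); arguing as for $\Gamma'$ in Lemma \ref{lemma:Juve-15-4} it is represented by an irreducible $(-2)$-curve, so $h^0 = 1$. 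For \eqref{eq:quadriche}, note $2R' - \Gamma - (N_5+\cdots+N_8) = 2R' - R = R'$ shifted — more precisely one computes this class has square $0$ and, being of the form (effective $(-2)$-curve)$+$something, compute $h^0=1$ directly; alternatively, as in Lemma \ref{lemma:Juve-15-2}, show it is linearly equivalent to an irreducible $(-2)$-curve plus checking the pencil has no moving part. The remaining three, \eqref{eq:cubic0}, \eqref{eq:cubic} and \eqref{eq:cubic2}, concern $3R' - 2(N_1+\cdots+N_4)$ and its subtractions of $\Gamma$ and of $N_5+\cdots+N_8$; I would compute their self-intersections ($(3R')^2 = 9(2g'-2) = 18$, minus $8\cdot 4 = 32$ from the $N_i$ terms, giving $-14$; then $-14 - 2\Gamma\cdot(\text{shift}) + \Gamma^2 = \dots$, etc.), exhibit them as big and nef by checking positivity against all the $(-2)$-curves $N_i, \Gamma, \Gamma'$ and against $R'$ (using that a nef class on a $K3$ with $\Pic$ generated by these is nef iff it meets every $(-2)$-curve nonnegatively and has nonnegative square), so that $h^1 = h^2 = 0$ and $h^0 = 2 + \tfrac12 D^2$ gives $13$, $8$, $1$ respectively. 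As in Lemma \ref{lemma:Juve-15-4}, the identity $\Gamma + \Gamma' \sim 3R' - 2(N_1+\cdots+N_4) - \Gamma - (N_5+\cdots+N_8)$ with $\Gamma\cdot\Gamma' = 2$ can be used as a shortcut: this exhibits the class in \eqref{eq:cubic2} as a sum of two $(-2)$-curves meeting at two points, forcing $h^0 = 1$.

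The main obstacle I expect is not any single computation but rather the nef/bigness verification for the three cubic classes: one must check nonnegativity of intersection against \emph{every} $(-2)$-curve on $S$, and since $\Pic S$ has rank $9$ there are infinitely many such curves a priori. The clean way around this is to use that $S$ is a $K3$ with a known explicit Picard lattice ($\ZZ[R]\oplus\mathbf N$): it suffices to show the class lies in the closure of the ample cone, which for these lattices reduces to checking positivity against the finitely many classes $N_1,\dots,N_8$, $\Gamma$, $\Gamma'$ and $R$ (or $R'$) that generate the relevant extremal rays, or equivalently to exhibiting the class as a nonnegative combination of nef classes (e.g. multiples of $R'$, $R$, $J$). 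I would structure the write-up so that the self-intersection arithmetic and the positivity checks are done once in a small table, and then Riemann--Roch and the connectedness/irreducibility observations (as in the phrase ``$\Gamma+N_1+N_2$ is connected'' used in the proof of Theorem \ref{seidue}) deliver all five $h^0$ values; the statement ``The proof is similar to \dots'' then honestly covers the routine parts.
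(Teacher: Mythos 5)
Your overall strategy is the same as the paper's: establish the intersection numbers and irreducibility of $\Gamma$ following the template of Lemma \ref{lemma:Juve-15}, introduce the $(-2)$-curve $\Gamma' := R'-N_1-\cdots-N_4$, observe that the class in \eqref{eq:cubic2} equals $\Gamma+\Gamma'$, and then deduce the remaining $h^0$'s by nefness plus Riemann--Roch. However, several arithmetic values you commit to are wrong, and one of them would actually derail the argument rather than just cosmetically.

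First, $(3R')^2 = 9(R')^2 = 9\cdot(2g'-2) = 9\cdot 6 = 54$, not $18$: here $h=17$ gives $g'=4$, so $2g'-2=6$, not $2$. Consequently $(3R'-2\sum_{i=1}^4 N_i)^2 = 54-32 = 22$, giving $\chi = 13$ for \eqref{eq:cubic0} as claimed.

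Second, and more importantly, $\Gamma\cdot\Gamma' = 0$, not $2$. The value $2$ is what occurs in Lemma \ref{lemma:Juve-15-4} (the $h=19$, $h\equiv 3\pmod 4$ case), which you seem to have carried over. Here a direct computation gives $\Gamma\cdot\Gamma' = R\cdot R' - R\cdot\sum_{i=1}^4 N_i - R'\cdot\sum_{i=5}^8 N_i = 8-4-4 = 0$. This is not cosmetic: with $\Gamma\cdot\Gamma'=2$ you would have $(\Gamma+\Gamma')^2 = 0$, and $\Gamma+\Gamma'$ would then be nef of arithmetic genus $1$ (an $I_2$ cycle), hence a fiber of an elliptic pencil with $h^0 = 2$ --- your ``shortcut'' of two $(-2)$-curves meeting at two points would give $h^0 = 2$, contradicting \eqref{eq:cubic2}. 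With the correct value $\Gamma\cdot\Gamma'=0$, the curves are disjoint, $(\Gamma+\Gamma')^2 = -4$, and the sum is rigid with $h^0 = 1$.

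Third, $(2R'-\Gamma-\sum_{i=5}^8 N_i)^2 = (2R'-R)^2 = 4(R')^2 - 4R\cdot R' + R^2 = 24-32+6 = -2$, not $0$ as you assert. In particular $2R'-R$ is \emph{not} nef: $(2R'-R)\cdot N_i = -1$ for $i=1,\dots,4$, so $N_1,\dots,N_4$ are fixed components. Indeed $2R'-R \sim \Gamma + N_1+N_2+N_3+N_4$, a connected tree of $(-2)$-curves (since $\Gamma\cdot N_i = 1$ for $i\le 4$ and the $N_i$ are pairwise disjoint), and it is this rigidity that gives $h^0 = 1$, not a big-and-nef/Riemann--Roch argument. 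Your remark about an ``irreducible $(-2)$-curve plus checking the pencil has no moving part'' conflates the two mechanisms; fix the sign of the self-intersection and use the explicit $(-2)$-tree decomposition instead.

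Once those three arithmetic points are corrected, the nefness check for \eqref{eq:cubic0} and \eqref{eq:cubic} goes through exactly as you outline (testing against $N_1,\dots,N_8$, $\Gamma$, $\Gamma'$ suffices because under \eqref{eq:pic9} these are the only relevant $(-2)$-classes on an open stratum), and the remaining Riemann--Roch computations deliver the stated values.
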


\begin{proof}
  The first statement is proved as in Lemma \ref{lemma:Juve-15-4}. 
One similarly  proves that the divisor
$
\Gamma':=R'-N_1-\cdots-N_4
$
on $S$ is represented by an irreducible $(-2)$-curve, proving \eqref{eq:gamma2}. 
One easily verifies that the divisor in \eqref{eq:cubic2} is linearly equivalent to $\Gamma +\Gamma'$, and  $\Gamma \cdot \Gamma'=0$, proving \ref{eq:cubic2}.
This can also be used to prove that the divisors in  \eqref{eq:quadriche}, \eqref{eq:cubic0} and \eqref{eq:cubic} are big and nef and thus compute their cohomology.
\end{proof}

As a consequence, $\Gamma$ is mapped by $\varphi_{R'}$ isomorphically to a rational normal quartic $\Gamma\subset \PP^4$ that passes through the nodes $x_1,x_2,x_3,x_4\in \overline{S'}$, and has $N_5,N_6,N_7,N_8$ as bisecant lines. 

As in the previous section, the surface $\overline{S'}\subset \PP^4$ is the complete intersection of a quadric and a cubic and we will now show that the cubic can be chosen to be singular at the nodes of $\overline{S'}$. 

\begin{lemma} \label{lemma:incubichesing}
  The surface $\overline{S'}$ is contained in precisely a pencil of cubics singular at $x_1,x_2,x_3,x_4$ and in a unique quadric. Furthermore, there exist precisely a pencil of quadrics  $Q \subset\mathbb P^4$ and a net of cubics $Y\subset\mathbb P^4$ such that both $Q$ and $Y$ contain $\Gamma+N_5+\cdots+N_8$ and $Y$ is singular at $x_1,x_2,x_3,x_4$. 
\end{lemma}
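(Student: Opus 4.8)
The plan is to run the same two-stage count as in the proof of Lemma~\ref{lemma:incubichesing2}, but now with four nodes instead of two, and to feed in the extra cohomological input from Lemma~\ref{lemma:Juve-15-3}. First I would establish the statement about cubics singular at all four nodes of $\overline{S'}$. Set $\mathfrak{n}:=x_1+x_2+x_3+x_4$. The linear system $|\I^2_{\mathfrak{n}/\PP^4}(3)|$ of cubics singular at the four points has dimension at least $34-4\cdot 6-1=\,$(in the relevant range) at least $10$, namely $h^0(\O_{\PP^4}(3))=35$ minus $4\cdot 6=24$ linear conditions, so it is a linear system of projective dimension $\geq 10$, while $|3R'-2(N_1+\cdots+N_4)|$ on $S$ has projective dimension $12$ by \eqref{eq:cubic0}. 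Hence the system $\Sigma$ of cubics singular at $\mathfrak{n}$ and containing $\overline{S'}$ has projective dimension $\geq 10-(13-1)=$ at least $1$ (more carefully, $\dim\Sigma \geq 34-24-(13-1)$ should be computed to land on exactly a pencil). To pin it down exactly I would restrict to the unique quadric $Q$ containing $\overline{S'}$, which is smooth since $\overline{S'}$ has only the four nodes and a singular-at-$\mathfrak{n}$ cubic exists: the restriction map $\Sigma\dashrightarrow \Sigma|_Q$ is projection away from the sublinear system of cubics of the form $Q\cup H$; for $Q\cup H$ to be singular at the four nodes, $H$ must pass through $x_1,\dots,x_4$, and four general points on a rational normal quartic in $\PP^4$ span a $\PP^3$, so such hyperplanes $H$ form a net (i.e.\ a $\PP^3$ of cubics $Q\cup H$, which is $4$-dimensional in the affine sense). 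Combined with $\dim|\I_{\overline{S'}/Q}(3)|=0$, this forces $\dim\Sigma=1$, a pencil. Uniqueness of $Q$ is immediate from projective normality of $\overline{S'}\subset\PP^4$.

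For the second, more refined statement I would repeat the computation of the proof of Proposition~\ref{preparatoria5,2}, i.e.\ analyze the two restriction maps
\[
\rho:\;H^0(\I^2_{\mathfrak{n}/\PP^4}(3))\longrightarrow H^0(\O_\Gamma(3)(-2\mathfrak{n}))\cong H^0(\O_{\PP^1}(4)) ,
\]
followed by
\[
r:\;\Ker\rho\longrightarrow H^0\bigl(\textstyle\bigoplus_{i=5}^8\O_{N_i}(1)\bigr)\cong\CC^{8}.
\]
The space of cubics $Y$ in the statement is $\PP(\Ker r)$. Using the factorization $r=r'\circ r_{\overline{S'}}$ through $(\Ker\rho)|_{\overline{S'}}$, one identifies $\Ker r_{\overline{S'}}$ with the pencil of cubics through $Q$ (dimension $4$ affinely, from the first part), identifies $(\Ker\rho)|_{\overline{S'}}$ with $H^0(\O_S(3R'-2(N_1+\cdots+N_4)-\Gamma))\cong\CC^{8}$ by \eqref{eq:cubic}, and identifies $\Ker r'$ with $H^0(\O_S(3R'-2(N_1+\cdots+N_4)-\Gamma-(N_5+\cdots+N_8)))\cong\CC^{1}$ by \eqref{eq:cubic2}. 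This gives $\dim\Ker r = 4 + 1 = 5$ when the point-tuple and lines come from an actual Nikulin surface, hence (by upper semicontinuity, since we have exhibited one surface where equality holds and the dimension can only drop) $\dim\PP(\Ker r)=4$, i.e.\ a net of cubics $Y$, and likewise the quadrics through $\Gamma+N_5+\cdots+N_8$ form a pencil by \eqref{eq:quadriche} (the quadrics through $\Gamma$ are a $\PP^{2}$ by Proposition~\ref{razionale}, and imposing passage through $N_5,\dots,N_8$ drops this by the appropriate amount controlled by $h^0(\O_S(2R'-\Gamma-(N_5+\cdots+N_8)))=1$).

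The main obstacle I anticipate is the \emph{exactness} of the dimension counts: all the inequalities $\dim|\I^2_{\mathfrak{n}/\PP^4}(3)|\geq\cdots$ and $\dim\Ker\rho\geq\cdots$ must be shown to be equalities, and this is what forces one to pass to the Nikulin specialization and use the precise values in Lemma~\ref{lemma:Juve-15-3} rather than mere inequalities. Concretely, one must check that the conditions imposed by the four double points on cubics are independent (they are, since four general points on the rational normal quartic are in general position in $\PP^4$), and that restriction to $\overline{S'}$ of the relevant cubic system realizes exactly the $K3$ linear system $|3R'-2(N_1+\cdots+N_4)-\Gamma|$, with no unexpected drop — which is guaranteed by sandwiching: $\dim(\Ker\rho)|_{\overline{S'}}\leq h^0(\O_S(3R'-2(N_1+\cdots+N_4)-\Gamma))$ on one side and $\dim\Ker\rho - \dim\Ker r_{\overline{S'}} \leq \dim(\Ker\rho)|_{\overline{S'}}$ on the other, forcing equality throughout once the outer dimensions are known. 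The Serre-duality/big-and-nef verifications underlying the cohomology computations in Lemma~\ref{lemma:Juve-15-3} are routine on a Nikulin $K3$ with the Picard lattice \eqref{eq:pic9} and I would not reproduce them.
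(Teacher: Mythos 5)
Your overall strategy is the right one --- a dimension count followed by a projection from the unique quadric, and a two-stage restriction à la the proof of Proposition~\ref{preparatoria5,2} for the refined statement --- but there are several arithmetic errors that break the argument as written.

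First, a node of a hypersurface of $\PP^4$ imposes at most \emph{five}, not six, linear conditions on cubic forms: vanishing of the four affine first partials plus the value (and by Euler's formula the value is even implied). So $h^0(\I^2_{\mathfrak n/\PP^4}(3))\geq 35-4\cdot 5=15$, i.e.\ $\dim|\I^2_{\mathfrak n/\PP^4}(3)|\geq 14$, and combined with $\dim|3R'-2(N_1+\cdots+N_4)|=12$ from \eqref{eq:cubic0} the standard exact-sequence count gives $\dim\Sigma\geq 14-12-1=1$. With your $4\cdot 6=24$ the estimate drops to a negative number and you would not even get that $\Sigma$ is nonempty. Second, your description of the center of the projection $r_Q$ is backwards: four points of $\Gamma$ in general position \emph{span} a $\PP^3$, which means they impose four independent conditions on hyperplanes, leaving a \emph{single} hyperplane $H$ through them, not a net (and \eqref{eq:gamma2} is exactly what rules out the degenerate coplanar case). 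With a unique reducible cubic $Q\cup H$ as center and $\Sigma|_Q$ a point, one gets $\dim\Sigma\leq 0+0+1=1$. Your claimed $\PP^3$ of cubics $Q\cup H$ would instead force $\dim\Sigma\geq 4$, contradicting the conclusion you then assert.

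The same slippage propagates into the second half. With the first assertion established, $\Ker r_{\overline{S'}}$ is the affine cone over the pencil $\Sigma$, hence $2$-dimensional, not $4$-dimensional, so $\dim\Ker r=2+1=3$ and $\PP(\Ker r)$ is indeed a net --- your stated answer is right but your computation gives $4+1=5$, which would yield a $\PP^4$, not a net. Also, quadrics in $\PP^4$ through a rational normal quartic $\Gamma$ form a $\PP^5$ (Proposition~\ref{razionale} with $n=4$ gives $|\O_{\PP^2}(2)|$), not a $\PP^2$; the reduction to a pencil then comes exactly as in the paper, from the fact that any quadric through $\Gamma+N_5+\cdots+N_8$ other than $Q$ restricts to the unique member of $|2R'-\Gamma-(N_5+\cdots+N_8)|$ by \eqref{eq:quadriche}. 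Note finally that the paper's own proof of this second assertion is shorter than your two-stage restriction: it directly bounds the cubic system via \eqref{eq:cubic2} and the first assertion; the restriction-map machinery is reserved for Proposition~\ref{preparatoria4,4}, where one does not yet have the Nikulin surface.
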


\begin{proof}
As in the proof of Lemma \ref{lemma:incubichesing2}, there is a unique quadric $Q$ containing $\overline{S'}\subset \mathbb P^4$ and this is necessarily smooth if there is a cubic containing $\overline{S'}$ and singular at  $x_1,\ldots,x_4$.

We denote by $\mathfrak{n}: = x_1 + x_2 + x_3 + x_4$ the set of four nodes of $\overline{S'}$. In  particular, the linear system  $|\I^2_{\mathfrak{n}/\PP^4}(3))|$ of cubics singular at $\mathfrak n$ has dimension  at least $14$.  Any such cubic either contains $\overline{S'}$ or cuts out on $\overline{S'}$ a divisor whose strict transform in $S$ lies in the linear system $|3R'-2(N_1+N_2+N_3+N_4)|$, which has dimension $12$ by \eqref{eq:cubic0}. Hence, the linear system $\Sigma$ of cubics singular at $\mathfrak n$ and containing $\overline{S'}$ is at least a pencil. In particular, the quadric $Q$ containing $\overline{S'}$ is smooth  and $\Sigma|_Q$ is a point. As in the proof of Lemma \ref{lemma:incubichesing2}, one concludes that $\dim \Sigma=1$ (and thus $\dim |\I^2_{\mathfrak{n}/\PP^4}(3))|=14$) by considering the projection $r_Q:\Sigma \dashrightarrow \Sigma|_Q$, whose center consists of a unique point determined by the only reducible cubic $Q \cup H$ with $H$ being the hyperplane through $x_1,x_2,x_3,x_4$, since, by \eqref{eq:gamma2}, the points $x_1,x_2,x_3,x_4$ do not lie in a plane.   This finishes the proof of the first assertion.

Similarly, one proves that the linear system  of quadrics in $\PP^4$ containing  $\Gamma+N_5+\cdots+N_8$ is a pencil, by noting that  any such quadric different from $Q$ intersects $\overline{S'}$ along a divisor whose strict transform on $S$ lies in the linear system $|2R'-\Gamma-(N_5+\cdots+N_8)|$, which has dimension zero by \eqref{eq:quadriche}. The fact that the linear system of cubics containing  $\Gamma+N_5+\cdots+N_8$ and singular at $x_1,x_2,x_3,x_4$ is $2$-dimensional
likewise follows from 
\eqref{eq:cubic2} and the first assertion.
\end{proof}

\begin{prop}\label{preparatoria4,4}
Let $\Gamma\subset\PP^4$ be a fixed rational normal curve. Take $4$ general points $x_1,x_2,x_3,x_4\in \Gamma$ and $4$ general bisecant lines $N_5,N_6,N_7,N_8$ to $\Gamma$.  Then the following hold:
\begin{itemize}
\item[(i)] There exist precisely a pencil of quadrics  $Q \subset\mathbb P^4$ and a net of cubics $Y\subset\mathbb P^4$ such that both $Q$ and $Y$ contain $\Gamma+N_5+N_6+N_7+N_8$ and $Y$ is singular at $x_1,x_2,x_3,x_4$.
\item[(ii)] For general $Q$ and $Y$ as in (i), the minimal desingularization $S$ of the complete intersection $\overline{S'}:= Q\cap Y$ is a marked non-standard Nikulin surface of genus $17$.
\item[(iii)] By varying $x_1,x_2,x_3,x_4\in \Gamma$ and $n_5,\ldots,n_8\in \Sym^2\Gamma$, one obtains all members  in a dense open subset of $\widehat{\F}_{17}^{\n,ns}$.\
\end{itemize}
\end{prop}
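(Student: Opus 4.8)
The plan is to argue exactly as in the proofs of Propositions \ref{preparatoria} and \ref{preparatoria5,2}, combining the numerical input of Lemma \ref{lemma:Juve-15-3} and Lemma \ref{lemma:incubichesing} with a specialization to the projective model of a Nikulin surface. The pencil of quadrics in (i) is immediate from Proposition \ref{razionale}: the isomorphism $\alpha_\Gamma$ identifies $|\I_{\Gamma/\PP^4}(2)|\cong|\O_{\PP^2}(2)|$, and a quadric through $\Gamma$ contains the bisecant line $N_j$ if and only if the associated conic passes through the point $n_j\in\Sym^2(\Gamma)=\PP^2$; since $n_5,\ldots,n_8$ are general, they impose independent conditions on conics, so these quadrics form a $\PP^1$.

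For the net of cubics I would follow the scheme of Proposition \ref{preparatoria5,2}. Set $\mathfrak n:=x_1+\cdots+x_4$ and consider the successive restriction maps
\[ \rho\colon H^0(\I^2_{\mathfrak n/\PP^4}(3))\longrightarrow H^0(\O_\Gamma(3)(-2\mathfrak n))\cong H^0(\O_{\PP^1}(4)), \qquad r\colon \Ker\rho\longrightarrow \bigoplus_{j=5}^{8}H^0(\O_{N_j}(1))\cong\CC^8, \]
so that $\Ker r$ is the space of cubics through $\Gamma+N_5+\cdots+N_8$ singular at $\mathfrak n$. Imposing a singularity at $\mathfrak n$ costs at most $20$ conditions, hence $h^0(\I^2_{\mathfrak n/\PP^4}(3))\geq 15$. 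For a lower bound on $\Ker r$ I would exhibit explicit members: the secant threefold $\Sec\Gamma$ (an irreducible cubic, singular along $\Gamma$ and containing every bisecant line of $\Gamma$) and the pencil $\{Q_t\cup H_0\}$, where $\{Q_t\}$ is the pencil of quadrics above and $H_0=\langle x_1,\ldots,x_4\rangle$ is the hyperplane spanned by the four general points of $\Gamma$; since $\Sec\Gamma$ is not divisible by the equation of $H_0$, these span a subspace of dimension $\geq 3$ of $\Ker r$. To get the opposite inequality I would specialize $x_1,\ldots,x_4$ and $N_5,\ldots,N_8$ to the nodes and lines of the projective model $\overline{S'}$ of a genus-$17$ non-standard Nikulin surface — legitimate, as Lemma \ref{lemma:incubichesing} shows (i) holds there — factor $r$ through restriction to $\overline{S'}$, and combine Lemma \ref{lemma:incubichesing} (the pencil of cubics through $\overline{S'}$ singular at the nodes) with \eqref{eq:cubic0}, \eqref{eq:cubic} and \eqref{eq:cubic2} to obtain $h^0(\Ker\rho)=10$, $(\Ker\rho)|_{\overline{S'}}\cong H^0(\O_S(3R'-2(N_1+\cdots+N_4)-\Gamma))$ and $\Ker r'\cong H^0(\O_S(3R'-2(N_1+\cdots+N_4)-\Gamma-(N_5+\cdots+N_8)))\cong\CC$, whence $h^0(\Ker r)=3$ at the Nikulin point. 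Since $h^0(\Ker r)\geq 3$ always and the parameter space of the data $(x_\bullet,n_\bullet)$ is irreducible, upper semicontinuity forces $h^0(\Ker r)=3$ for general data, which proves (i).

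For (ii), a surface $\overline{S'}=Q\cap Y$ as in (i) is a $(2,3)$ complete intersection in $\PP^4$, hence has trivial dualizing sheaf, it has ordinary double points at $x_1,\ldots,x_4$ (since $Q$ is smooth and $Y$ has a double point there) and, by generality, no other singularity — this holds for the Nikulin model, which lies in the irreducible total family of such $\overline{S'}$. Let $\nu\colon S\to\overline{S'}$ be the minimal desingularization, $N_1,\ldots,N_4$ the exceptional $(-2)$-curves, $N_5,\ldots,N_8$ the strict transforms of the four lines, $R':=\nu^*\O(1)$ and $\Gamma$ the strict transform of the rational normal quartic; the intersection numbers of Lemma \ref{lemma:Juve-15-3} are then read off directly from the construction, $S$ is a $K3$ surface, and the $N_i$ are pairwise disjoint. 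I would set $M:=R'-\Gamma$, $R:=\Gamma+N_5+\cdots+N_8$ and $H:=R+R'+M$. The divisor $D:=2M-(N_1+\cdots+N_8)$ has $D^2=0$ and $D\cdot R'=0$, so by the Hodge index theorem on the $K3$ surface $S$ one gets $D\sim 0$, i.e. $N_1+\cdots+N_8\sim 2M$; one then checks $H\cdot N_i=0$ for all $i$, $H^2=32$ (so $h=17$), primitivity of $H$, and that $R=\tfrac12(H-N_1-\cdots-N_4)\in\Pic S$ witnesses the index-$2$ (non-standard) embedding, so that $(S,M,H,R)\in\widehat{\F}_{17}^{\n,ns}$ as soon as the $N_j$ are irreducible and $\Pic S$ has rank $9$. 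As in Propositions \ref{preparatoria} and \ref{preparatoria5,2}, this last point is secured by the parameter count of (iii): the construction depends on $\dim\Sym^4(\Gamma)+\dim\Sym^4(\Sym^2\Gamma)+1+2-1-\dim\Aut(\Gamma)=4+8+1+2-1-3=11$ parameters, where $+1$ and $+2$ are the choices of $Q$ and $Y$ from (i), $-1$ accounts for the $1$-parameter family of cubics in the net yielding the same complete intersection $\overline{S'}$ (a pencil, by Lemma \ref{lemma:incubichesing}), and $-3$ for $\Aut(\Gamma)\cong\PGL(2)$; hence the constructed $K3$ surfaces move in an $11$-dimensional family. Since $\dim\widehat{\F}_{17}^{\n,ns}=11$ and this space is irreducible (Appendix \ref{appendix}), the family is dense, which simultaneously forces $\Pic S$ to have rank $9$ for general members and proves (iii).

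The step I expect to be the main obstacle is showing that the system of cubics in (i) is \emph{exactly} a net: a naive dimension count only guarantees a pencil, so one must produce the explicit $3$-dimensional piece of $\Ker r$ (via $\Sec\Gamma$ together with $\{Q_t\cup H_0\}$) for the lower bound, and then carry out the specialization to the Nikulin model carefully, where the chain of inequalities coming from Lemma \ref{lemma:incubichesing} and the cohomology computations \eqref{eq:cubic0}, \eqref{eq:cubic}, \eqref{eq:cubic2} pins the dimension down. The other technical point is verifying that all the auxiliary linear systems in Lemma \ref{lemma:Juve-15-3} are big and nef, so that their cohomology is as claimed.
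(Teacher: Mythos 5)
Your proof follows the same overall strategy as the paper's (pencil of quadrics from Proposition~\ref{razionale}, restriction maps $\rho$ and $r$ modelled on Proposition~\ref{preparatoria5,2}, specialization to the Nikulin model together with Lemma~\ref{lemma:incubichesing}, Hodge--index argument for $2$-divisibility of $\sum N_i$, and the $11$-parameter count). There is, however, one place where you add genuine content. The paper asserts that the two-step restriction $\rho\colon H^0(\I^2_{\mathfrak n/\PP^4}(3))\to H^0(\O_{\PP^1}(4))$, $r\colon\Ker\rho\to\CC^8$ already yields a family of cubics of projective dimension $\geq 2$; but the naive count gives $15-5-8=2$ as a vector-space bound, i.e.\ only a \emph{pencil}. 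You correctly notice this and close the gap by exhibiting explicit members of $\Ker r$: the secant cubic $\Sec\Gamma$ (irreducible, singular along $\Gamma$, containing every bisecant of $\Gamma$) together with the pencil $\{Q_t\cup H_0\}$, $Q_t$ in the pencil of quadrics and $H_0=\langle x_1,\dots,x_4\rangle$ the hyperplane spanned by the four general points. Since $\Sec\Gamma$ is irreducible it is not divisible by the equation of $H_0$, so these span a $3$-dimensional subspace of $\Ker r$, giving the needed lower bound $\geq 2$; the specialization to the Nikulin model and semicontinuity then give equality. This is the cleanest way to justify the net, and without it the argument (as abbreviated in the paper) is incomplete. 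Everything else in your write-up — in particular setting $M:=R'-\Gamma$, verifying $(2M-\sum N_i)^2=0=(2M-\sum N_i)\cdot R'$ and invoking the Hodge index theorem, the definitions of $R$, $H$, the check $H^2=32$, $H\cdot N_i=0$, and the count $\dim\Sym^4(\Gamma)+\dim\Sym^4(\Sym^2\Gamma)+1+2-1-\dim\Aut(\Gamma)=11$ — agrees with what the paper does, just spelled out more explicitly.
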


\begin{proof}
  The existence of precisely a pencil of quadrics $Q$ follows from Proposition \ref{razionale}.

  We set $\mathfrak{n}: = x_1 + x_2 + x_3 + x_4$ and observe that $h^0(\mathcal I^2_{\mathfrak{n}/\PP^4}(3))=15$ by generality. Restricting first to $\Gamma$ and then to $N_5+N_6+N_7+N_8$ (via maps $\rho$ and $r$ analogous to the maps \eqref{eq:rho4,4} and \eqref{eq:restr2} in the proof of Proposition \ref{preparatoria5,2}), one shows the existence of a family  of cubics  $Y$ as in (i) of dimension $\geq 2$. Equality holds, since it holds for nodes and lines (possibly in special position) as in Lemma \ref{lemma:incubichesing}. This proves (i), as well as the fact that a surface  $\overline{S'}:= Q\cap Y\subset \PP^4$ as in (ii) specializes to a projective model of a Nikulin surfaces, and has therefore trivial canonical bundle and  $4$ ordinary double points at $x_1,x_2,x_3,x_4$, while being smooth elsewhere. Let $\nu:S\to \overline{S'}$ be its minimal desingularization and $N_1,\ldots,N_4$ be the exceptional divisors. Define $R':=\nu^*\mathcal O_{\overline{S'}}(1)$ and $R:=\mathcal{O}_{S}(\Gamma+N_5+\cdots+N_8)$. Exactly as in the proof of Proposition \ref{ita}, one may show that the sum $N_1+\cdots +N_8$ is $2$-divisible and set $M:=(N_1+\cdots +N_8)/2$ and $H:=R+R'+M$, so that the $4$-tuple $(S,M,H,R)\in \widehat{\F}_{17}^{\n,ns}$. 
  
We now exploit the projective model $\overline{S'}$ of $S$ to write a factorization of the map $r$ similar to \eqref{fact}. Applying  Lemma \ref{lemma:incubichesing} and equation \eqref{eq:cubic}, one may easily conclude that $t=2$, which finishes the proof of (i).


Point (iii) follows from Lemma \ref{lemma:incubichesing}.
\end{proof}

\begin{proof}[Proof of Theorem \ref{thm:4,4}]
Fix a rational normal curve $\Gamma\subset\PP^4$. We proceed as in the proof of Theorem \ref{thm:5,2} andidentify $\Hilb^4({\PP^2})\times \Hilb^4(\PP^1)$ with the parameter space for pairs $(\mathfrak l,\mathfrak n)$, where $\mathfrak l$ is a set of four unordered  bisecant lines to $\Gamma$  and $\mathfrak{n}$ is a set of four unordered points on $\Gamma$. Over an open set of $\Hilb^4({\PP^2})\times \Hilb^4(\PP^1)$  we have a  $\PP^2$-bundle $\MM$ whose fibre over a point $\left(\mathfrak l,\mathfrak{n} \right)$ is the net (by Proposition \ref{preparatoria4,4}(i)) of cubics $Y$ containing
$\Gamma \cup \mathfrak l$ and singular at $\mathfrak{n}$. Finally, on
$\MM$ we have the $\PP^1$-bundle $\mathbb O$ whose fibre over a point $\left(\mathfrak{n},\mathfrak l,Y\right)\in\MM$ is the pencil (by Proposition \ref{preparatoria4,4}(i)) of quadrics containing
$\Gamma \cup \mathfrak l$.

 We have a natural action of $\PGL(2)$ on all our parameter spaces and we will now show that $\mathbb O\sslash\PGL(2)$ is rational. Denoting by $U\subset \Hilb^4({\PP^2})$ the open subset parametrizing $4$-tuples of points in general position and applying Kempf's descent lemma several times as in the proof of Theorem \ref{thm:5,2}, one reduces to showing that $U/\PGL(2)$ is rational. This holds true because $U/\PGL(3)$ is a point and 
 $
 \PGL(3)/\PGL(2)\simeq \PP^5$.

By Proposition \ref{preparatoria4,4}(iii), the $12$-dimensional rational quotient $\mathbb O\sslash\PGL(2)$ admits a dominant rational map 
$\mathbb O\sslash\PGL(2)\dashrightarrow \widehat{\F}_{17}^{\n,ns}$, whose fiber over a general $(S,M,H,R)\in \widehat{\F}_{17}^{\n,ns}$ is the pencil (by Lemma \ref{lemma:incubichesing}) of cubics containing the projective model $\overline{S'}\subset \PP^4$ of $S$ and singular at its nodes. As a consequence, the moduli space $\widehat{\F}_{17}^{\n,ns}$ is stably rational and the same holds true for the space $\widehat\P_{4}$. The space
${\F}_{17}^{\n,ns}$ is therefore unirational. 
To obtain the unirationality of $\R_{4,4}$, it is thus enough to show that $r_{4,4}$ is dominant. Equivalently, we need to check that for a general $(S,M,H,R,C)\in \widehat\P_{4}$, the fiber of $r_{4,4}$ over the point $(C,x_1+x_2+x_3+x_4,M^\vee|_C)\in \mathcal R_{g,4}$ is $2$-dimensional. By Lemma \ref{lemma:incubichesing} the projective model $\overline{S'}\subset \PP(H^0(S,R')^\vee)=\PP^4$ of $S$ is the complete intersection of a uniquely determined quadric and a cubic (moving in a pencil) singular at $x_1,x_2,x_3,x_4$. It is thus enough to prove the existence of precisely a pencil of quadrics  $Q \subset\mathbb P^4$ and a net of cubics $Y\subset\mathbb P^4$ with $Y$ singular at $x_1,x_2,x_3,x_4$ such that both $Q$ and $Y$ contain $C\subset \mathbb P(H^0(C,\omega_C\otimes M)^\vee)=\mathbb P^4$ and $4$ lines. To check this, specialize $C$ to the curve $\Gamma+N_5+\cdots+N_8\subset \overline{S'}$ and apply Proposition \ref{preparatoria4,4}(i). \color{black}
\end{proof}

\section{The cases $\R_{4,2}$ and $\R_{3,6}$}

We will prove the following theorem in this section:

\begin{thm}\label{brevetto}
The moduli spaces $\F_{15}^{\n,ns}$, $\P_4$ and $\P'_{3}$ are unirational, and the maps $r_{4,2}$, $r_{3,6}'$ are both dominant. In particular, the moduli spaces $\R_{4,2}$ and $\R_{3,6}$ are unirational.
\end{thm}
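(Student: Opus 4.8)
The strategy follows the same pattern established in the preceding sections for $\R_{5,2}$ and $\R_{4,4}$, adapted to genus $h=15$, where $g(R)=4$ and $g(R')=3$. The plan is to take a general $(S,M,H)\in \F_{15}^{\n,ns}$ and analyze the projective model $\varphi_{R'}\colon S\to \overline{S'}\subset \PP^3$. Since $g(R')=3$ and $R'$ is not hyperelliptic (by \cite[Prop. 5.6]{DL}), $\overline{S'}$ is a quartic surface in $\PP^3$ acquiring nodes at the contracted curves $N_i$ with $N_i\cdot R'=0$, and the remaining $N_i$ are mapped to lines. First I would single out a distinguished divisor $\Gamma=R-N_{j_1}-\cdots-N_{j_k}$ (a rational normal cubic, i.e.\ a twisted cubic in $\PP^3$, since $g(R)=4$), prove by the lattice-theoretic argument of Lemma \ref{lemma:Juve-15} that it is represented by an irreducible $(-2)$-curve with the expected intersection numbers, and compute the relevant cohomology groups $h^0$ of $3R'-2(\sum N_i)$, $3R'-2(\sum N_i)-\Gamma$, and $3R'-2(\sum N_i)-\Gamma-(\text{lines})$ via a big-and-nef analysis as in Lemma \ref{lemma:Juve-15-4}.

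Next I would establish the analogue of Lemma \ref{lemma:incubichesing}: $\overline{S'}$ is a quartic surface, and one shows that the linear system of quartics in $\PP^3$ singular at the nodes $x_i$ and containing $\Gamma$ plus the relevant lines has the predicted (small) dimension, by restricting to $\Gamma\cong\PP^1$ and then to the lines and using the $h^0$ computations above together with semicontinuity, checking equality on the projective model of an actual Nikulin surface. This gives the "vice versa" Proposition (analogue of \ref{preparatoria5,2}, \ref{preparatoria4,4}): starting from a fixed twisted cubic $\Gamma\subset\PP^3$, a general choice of points $x_i\in\Gamma$ and bisecant lines $N_j$, the quartics singular at the $x_i$ and containing $\Gamma\cup\{N_j\}$ form a linear system of the expected dimension, a general member has minimal desingularization a non-standard Nikulin surface of genus $15$ (with the missing $(-2)$-curves recovered as explicit divisor classes, shown effective by Riemann--Roch and Serre duality), and a parameter count matching $\dim\F_{15}^{\n,ns}=11$ shows one sweeps out a dense open subset.

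For the (uni)rationality of $\F_{15}^{\n,ns}$ I would proceed exactly as in the proof of Theorem \ref{thm:5,2}: fix a plane conic $\Delta$ to identify $\Sym^2(\Gamma)\cong\PP^2$, build a projective bundle $\MM$ over an open subset of a suitable $\Hilb^{a}(\PP^2)\times\Hilb^{b}(\PP^1)$ whose fibers are the linear systems of quartics, use Kempf's descent lemma \cite{DN} (the stabilizer in $\PGL(2)$ of a general configuration of bisecant lines being trivial) to descend to the $\PGL(2)$-quotient, and reduce to the rationality of $U/\PGL(2)$, which follows from $\PGL(3)/\PGL(2)\cong|\O_{\PP^2}(2)|\cong\PP^5$ and rationality (or at least unirationality) of $U/\PGL(3)$ by Castelnuovo; this yields stable rationality of a finite cover $\widetilde\F$ of $\F_{15}^{\n,ns}$ and hence unirationality of $\F_{15}^{\n,ns}$, $\P_4$ and $\P'_3$. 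Finally, for dominance of $r_{4,2}$ and $r'_{3,6}$ one takes a general $(S,M,H,C)\in\P_4$ (resp.\ $\P'_3$), shows via the $h^0$ computations that $C\subset\PP^3$ (resp.\ the relevant projective space) is contained in precisely the predicted linear system of quartics singular at the branch points with the extra lines in the intersection, so that the fiber has the expected dimension, specializing $C$ to a reducible curve $\Gamma+\sum N_j\in|R|$ to invoke the "vice versa" Proposition; a fiber-dimension count then gives dominance, hence unirationality of $\R_{4,2}$ and $\R_{3,6}$. The main obstacle I expect is the cohomology bookkeeping in the genus-$3$ situation: here $\overline{S'}$ is a \emph{hypersurface} (a quartic in $\PP^3$) rather than a complete intersection of a quadric and a cubic, so the clean "restrict to the quadric" trick of Lemmas \ref{lemma:incubichesing2} and \ref{lemma:incubichesing} is unavailable, and one must control the linear system of singular quartics directly — getting the dimension count to come out exactly right (and verifying it on the Nikulin model, possibly in special position) is the delicate point.
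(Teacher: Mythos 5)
Your overall template for the $\F_{15}^{\n,ns}$--$\P_4$--$r_{4,2}$ side is correct and matches the paper: exhibit a twisted cubic $\Gamma$ inside $\overline{S'}\subset\PP^3$, prove a ``vice versa'' proposition for quartics singular at the chosen points and containing $\Gamma$ plus lines, use Kempf descent on a finite cover to get unirationality, and specialize $C$ to $\Gamma+\sum N_j$ for dominance. You also correctly anticipate the main technical difference from the $h=19,17$ cases, namely that $\overline{S'}$ is a hypersurface rather than a $(2,3)$-complete intersection, so the ``restrict to the unique quadric'' trick is unavailable. However, there are two concrete gaps.

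First, of the six lines $N_3,\ldots,N_8$ on $\overline{S'}\subset\PP^3$, \emph{only four} are bisecant to $\Gamma=R-N_3-N_4-N_5-N_6$: one checks $\Gamma\cdot N_7=\Gamma\cdot N_8=0$. Consequently the parameter space in the ``vice versa'' proposition is not of the form $\Hilb^a(\PP^2)\times\Hilb^b(\PP^1)$ (bisecants plus points on $\Gamma$); one must also choose a general line $N_7\subset\PP^3$ disjoint from $\Gamma$, so the correct space is $\Hilb^4(\PP^2)\times\Hilb^2(\PP^1)\times\GG(1,3)$. Without the $\GG(1,3)$ factor your parameter count is $8+2-3=7$, not $11$, and the construction cannot dominate $\F_{15}^{\n,ns}$. (The eighth $(-2)$-curve is then recovered as $N_8:=2\Gamma-2R'+N_1+\cdots+N_6-N_7$.)

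Second, your treatment of $r_{3,6}'$ is not workable as sketched. For $C'\in|R'|$ with $g(C')=3$ the relevant embedding is $C'\subset\PP(H^0(\omega_{C'}\otimes M)^\vee)=\PP^4$, induced by the \emph{other} projection $\varphi_R:S\to\overline{S}\subset\PP^4$, and there $\overline{S}$ is the complete intersection of a quadric and a cubic --- not a quartic hypersurface, and not the same projective model used for $r_{4,2}$. So ``quartics singular at the branch points'' does not apply; the argument instead shows that after modifying the cubic $Y$ by a multiple of the quadric $Q$ one may assume $Y$ is singular at the six nodes $x_3,\ldots,x_8$, that $C'$ lies on a unique such cubic (since $|3R-R'-N_3-\cdots-N_8|$ is empty), and on a net of quadrics (since $|2R-R'|$ is a pencil), yielding a $2$-dimensional fiber. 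Also, the cohomology groups you reference ($h^0(3R'-2\sum N_i)$ etc.) are copied from the degree-$19$ case; for the quartic in $\PP^3$ the relevant restriction maps are from $H^0(\I^2_{\mathfrak n/\PP^3}(4))$, and the nonvanishing needed is the emptiness of $|4R'-\Gamma-N_3-\cdots-N_7-2N_1-2N_2|\sim|3R-2N_3-\cdots-2N_6-3N_7-2N_8|$.
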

For a general $(S,M,H)\in \F_{15}^{\n,ns}$, the polarizations $R$ and $R'$ have genus $g=4$ and $g'=3$, respectively. In particular, the image of the morphism
$$\varphi_{R'}:S\longrightarrow \overline{S'}\subset \mathbb P^3$$
is a quartic $\overline{S'}$ containing $6$ lines $N_3,\ldots,N_8$ and having two double points $x_1,x_2$ arising from the contraction of $N_1,N_2$. We consider the divisor
$$
\Gamma:=R-N_3-N_4-N_5-N_6;
$$
as before one proves that it is represented by an irreducible ($-2$)-curve. Since $\Gamma\cdot R'=3$, the image of $\Gamma$ under $\varphi_{R'}$ is a twisted cubic. It is trivial to check that $\Gamma$ passes through the nodes $x_1,x_2$ and is disjoint from the lines $N_7,N_8$, while the lines $N_3,N_4,N_5,N_6$ are bisecant to $\Gamma$.

\begin{prop}\label{finale}
Let $\Gamma\subset \PP^3$ be a fixed twisted cubic. Take two general points $x_1,x_2\in \Gamma$, four general bisecant lines $N_3,N_4,N_5,N_6$ to $\Gamma$ and a general line $N_7\subset\PP^3$ disjoint from $\Gamma,N_3,N_4,N_5,N_6$. Then the following hold:
\begin{itemize}
\item[(i)] There exists a unique quartic $\overline{S'}\subset\mathbb P^3$ containing $\Gamma,N_3,N_4,N_5,N_6,N_7$ and singular at $x_1,x_2$.
\item[(ii)] The minimal desingularization $S$ of a quartic $\overline{S'}\subset\mathbb P^3$ as in (i) is a non-standard Nikulin surface of genus $15$.
\item[(iii)] By varying $x_1,x_2\in \Gamma$, $n_3,\ldots, n_6\in \Sym^2\Gamma$ and $N_7\in \GG(1,3)$, one obtains all Nikulin surfaces in a dense open subset of $\F_{15}^{\n,ns}$.
\end{itemize}
\end{prop}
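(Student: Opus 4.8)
The plan is to adapt the scheme of Propositions \ref{preparatoria5,2} and \ref{preparatoria4,4} to the present setting, the only structural novelty being that $\overline{S'}$ is now a quartic \emph{hypersurface} in $\PP^3$: there is no residual complete intersection, so the auxiliary linear systems of forms collapse to a single quartic, which is exactly the content of (i).

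\emph{Proof of (i).} Put $\mathfrak n:=x_1+x_2$. Each node imposes four conditions on quartics, so $h^0(\mathcal I^2_{\mathfrak n/\PP^3}(4))\geq 35-8=27$. Restricting successively to $\Gamma$ (which passes simply through $x_1,x_2$, so a quartic singular there restricts to a section of $\mathcal O_{\PP^1}(12)$ vanishing doubly at two points, i.e. landing in $H^0(\mathcal O_{\PP^1}(8))\cong\CC^9$), then to the bisecant lines $N_3,N_4,N_5,N_6$ (each already vanishing at its two points on $\Gamma$, so we land in $H^0(\mathcal O_{\PP^1}(2))^{\oplus 4}\cong\CC^{12}$), and finally to the disjoint line $N_7$ (landing in $H^0(\mathcal O_{\PP^1}(4))\cong\CC^5$), we get that the space $W$ of quartics containing $\Gamma\cup N_3\cup\cdots\cup N_7$ and singular at $x_1,x_2$ satisfies $\dim W\geq 27-9-12-5=1$. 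For the reverse inequality I would argue by semicontinuity, as in Proposition \ref{preparatoria5,2}: since the family obtained by varying the data contains the projective model $\overline{S'}=\varphi_{R'}(S)$ of a general Nikulin surface (possibly with $x_1,x_2$ and $N_3,\dots,N_7$ in special position), it suffices to prove $\dim W=1$ in that case. There the restriction $W\to W|_{\overline{S'}}$ has kernel $H^0(\mathcal I_{\overline{S'}/\PP^3}(4))\cong\CC$; moreover a quartic singular at a node $x_i$ restricts to a section of $\mathcal O_{\overline{S'}}(4)$ lying in $\mathfrak m_{x_i}^2$, whose pullback to $S$ vanishes to order $\geq 2$ along the exceptional curve $N_i$ (as $\mathfrak m_{x_i}\mathcal O_S=\mathcal O_S(-N_i)$ at the $A_1$-point $x_i$). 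Hence $W|_{\overline{S'}}$ pulls back into $H^0(\mathcal O_S(4R'-\Gamma-(N_3+\cdots+N_7)-2N_1-2N_2))=H^0(\mathcal O_S(4R'-R-N_7-2N_1-2N_2))$, and one checks by a Riemann--Roch computation on $S$ (cf. the proof of Lemma \ref{lemma:Juve-15-4}, using that this class equals $3\Gamma+N_3+N_4+N_5+N_6-3N_7-2N_8$, which is not effective) that this group vanishes. Thus $W|_{\overline{S'}}=0$ and $\dim W=1$.

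\emph{Proof of (ii) and (iii).} By (i) the construction specializes to the projective model of a Nikulin surface, so a general $\overline{S'}$ as in (i) is an irreducible quartic with trivial canonical bundle, two ordinary double points at $x_1,x_2$ and smooth elsewhere. Let $\nu:S\to\overline{S'}$ be the minimal desingularization, $N_1,N_2$ the exceptional $(-2)$-curves, $R':=\nu^*\mathcal O_{\overline{S'}}(1)$, and keep the names $\Gamma,N_3,\dots,N_7$ for the strict transforms (all $(-2)$-curves, the $N_i$ disjoint from $N_1,N_2$ and, by generality, pairwise disjoint). Put $N_8:=2\Gamma+N_1+\cdots+N_6-N_7-2R'$; a direct computation gives $N_8^2=-2$, $N_8\cdot N_i=0$ for $i\leq 7$, $N_8\cdot R'=1$, so $N_8$ is effective by Riemann--Roch and Serre duality. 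Since $\sum_{i=1}^8 N_i=2(\Gamma+N_1+\cdots+N_6-R')$ is $2$-divisible, setting $M:=\frac{1}{2}\sum N_i=R-R'+N_1+N_2$, $R:=\Gamma+N_3+N_4+N_5+N_6$ and $H:=R+R'+M=2R+N_1+N_2$, one verifies $H^2=28$, $H\cdot M=0$, $\sum N_i\sim 2M$ and that $H-N_1-N_2$ is $2$-divisible, whence $(S,M,H)\in\F_{15}^{\n,ns}$ provided $N_8$ is irreducible. As in Proposition \ref{preparatoria}, the irreducibility of $N_8$ (and the absence of further nodes on $\overline{S'}$) follows once the family of $K3$ surfaces is shown to have dimension $\geq 11$, since otherwise its Picard number would exceed $9$; the parameter count $\dim\Sym^2\Gamma+\dim\Sym^4(\Sym^2\Gamma)+\dim\GG(1,3)-\dim\Aut(\Gamma)=2+8+4-3=11$ supplies this, because the only projectivities of $\PP^3$ fixing $\Gamma$ form $\Aut(\Gamma)\cong\PGL(2)$ and a general $K3$ surface has no automorphisms. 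This proves (ii), and (iii) follows at once: the resulting $11$-dimensional family dominates the irreducible $11$-dimensional space $\F_{15}^{\n,ns}$, since by (i) every general $(S,M,H)\in\F_{15}^{\n,ns}$ is recovered from $\overline{S'}=\varphi_{R'}(S)$, the unique quartic through the corresponding Nikulin data.

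\emph{Main obstacle.} The crux is the sharpness in (i), i.e. ruling out a positive-dimensional family of quartics through the prescribed curves and singular at $x_1,x_2$. Unlike the $\PP^4$ cases there is no complete intersection to project onto, so one must transfer the restriction to $\overline{S'}$ into a statement on $S$ and prove directly the vanishing of $H^0(\mathcal O_S(4R'-R-N_7-2N_1-2N_2))$; carrying out the big-and-nef bookkeeping behind this vanishing, together with the verification that a singular quartic meets each exceptional curve $N_i$ with multiplicity exactly $2$, is where the real work lies.
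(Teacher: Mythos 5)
Your proposal follows the same route as the paper's proof: the existence in (i) comes from the same dimension count via successive restriction to $\Gamma$, the bisecants, and $N_7$; uniqueness comes from specializing to the projective model of a Nikulin surface and the non-effectivity of the class $4R'-\Gamma-(N_3+\cdots+N_7)-2N_1-2N_2\sim 3R-2(N_3+\cdots+N_6)-3N_7-2N_8$ (which you correctly rewrite in terms of $\Gamma$); and (ii)-(iii) rely on the same definition of $N_8$, $M$, $H$ and the same $14-3=11$ parameter count using $\Aut(\Gamma)\cong\PGL(2)$. Your identification of the multiplicity-$2$ behaviour along $N_1,N_2$ via $\mathfrak m_{x_i}\mathcal O_S=\mathcal O_S(-N_i)$ at the $A_1$-point is a correct justification of a step the paper leaves implicit, but otherwise the argument is the one in the text.
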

\begin{proof}
Setting $\mathfrak n:=x_1+x_2$, we observe that $h^0(\mathcal I^2_{\mathfrak{n}/\PP^3}(4))=27$ and thus the kernel of the
  restriction map
  \begin{equation}\label{forza}
    \rho:  H^0(\mathcal I^2_{\mathfrak{n}/\PP^3}(4)) \longrightarrow
    H^0(\mathcal \O_{\Gamma}(4)(-2(x_1+x_2))\cong H^0(\O_{\PP^1}(8)) \cong \CC^9
    \end{equation}
has dimension $\geq 18$. We consider the restriction map to $N_3+N_4+N_5+N_6+N_7$
\begin{equation}\label{roma}
  r: \Ker \rho \longrightarrow H^0(\oplus_{i=3}^6 \O_{N_i}(4-2)\oplus \O_{N_7}(4))\cong \CC^{17},
\end{equation}
 whose kernel has dimension $\geq 1$. The quartics $\overline{S'}\subset\mathbb P^3$ as in (i) are parametrized by $\PP(\Ker r)$ and this proves the existence part in (i). To obtain uniqueness, by semicontinuity it is enough to specialize to the case where $x_1,x_2$ and $N_3,N_4,N_5,N_6,N_7$ are nodes and lines on the projective model $\overline{S'}$ of a Nikulin surface $S$ and use the fact that the line bundle 
 $$
 4R'-\Gamma-N_3-N_4-N_5-N_6-N_7-2N_1-2N_2 \sim 3R-2N_3-\cdots-2N_6-3N_7-2N_8
 $$
 is not effective (which can be proved using similar techniques as above). This also implies that general quartics as in (i) can be specialized to projective models of general Nikulin surfaces. 
 
 We now turn to point (ii). Being a quartic, $\overline{S'}$ has trivial canonical bundle and by generality (as this holds true  when $x_1,x_2,N_3,N_4,N_5,N_6,N_7$ lie the projective model of a Nikulin surface) its only singularities are $2$ nodes at $x_1,x_2$ whose inverse images  under the desingularization map $\nu:S\to\overline{S'}$ are two ($-2$)-curves $N_1,N_2$. Set $R':=\nu^*\mathcal O_{\overline{S'}}(1)$ and $N_8:=2\Gamma-2R'+N_1+\cdots +N_6-N_7$. As in the proof of Proposition \ref{preparatoria}, one shows that $N_8$ is an effective divisor satisfying $N_8^2=-2$ and $N_8\cdot N_j=0$ for $j\neq 8$, and the sum $\sum_{i=1}^8N_i$ is $2$-divisible in $\Pic(S)$. Setting $M:=(N_1+\cdots N_8)/2$, $R:=\nu^*\mathcal{O}_{ S}(\Gamma+N_3+\cdots+N_6)$ and $H:=R+R'+M$, in order to prove (ii) it only remains to verify that $N_8$ is irreducible. Again as in the proof of of Proposition \ref{preparatoria}, this automatically follows if one shows that the constructed $K3$ surfaces move in a family of dimension $\geq11$.
 
By (i) the family of $K3$ surfaces we have constructed depends on   
$$
 \dim\left(\Sym ^2(\Gamma)\times \Hilb^4(\mathbb P^2)\times \GG(1,3)\right))-\dim PGL(2)=14-3=11
 $$
 parameters, where we have used the correspondence between bisecant lines to $\Gamma$ and points in $\PP^2$ and the fact that any projectivity of $\mathbb P^3$ fixing $\Gamma$ maps a quartic $\overline{S'}$ to a projectively equivalent surface. This implies points (ii), (iii).
\end{proof}

\begin{proof}[Proof of Theorem \ref{brevetto}]
We fix a twisted cubic $\Gamma\subset \PP^3$ and identify $\Sym ^2(\Gamma)\simeq \PP^2$, so that the product $ \Hilb^4(\mathbb P^2)\times \Hilb^2(\mathbb P^1)\times \GG(1,3)$ parametrizes triples $(\mathfrak l,\mathfrak n,l_7)$, where  $\mathfrak l$ is a set of $4$ bisecant lines to $\Gamma$, $\mathfrak n$ is a set of $2$ unordered point on $\Gamma$,  and $l_7$ is a line in $\mathbb P^3$. The group $PGL(2)\subset PGL(3)$ of projectivities fixing $\Gamma$ acts on this parameter space and by Proposition \ref{finale} the quotient $\Hilb^4(\mathbb P^2)\times \Hilb^2(\mathbb P^1)\times \GG(1,3)\sslash\PGL(2)$ is birational to a cover $\widetilde \F$ of degree ${6\choose 4}\cdot {2\choose 1}=30$ (corresponding to the choices of $4$ out of $6$ lines and of $1$ out of the remaining $2$ lines) of $\F_{15}^{\n,ns}$. Let $U\subset \Hilb^4({\PP^2})$ be the open subset parametrizing $4$-tuple of points in general position so that $U/\PGL(2)\simeq \PP^5$. By applying Kempf's descent lemma several times as in the proof of Theorem \ref{thm:4,4}, we obtain that $\widetilde \F$ is rational  and thus the unirationality of $\F_{15}^{\n,ns}$ and $\P_{4}$. 

To prove that $r_{4,2}$ is dominant, we need to show that, given a general $(S,M,H,C)\in\P_{4}$, the curve $C\subset\PP(H^0(\omega_C\otimes M)^\vee)= \mathbb P^3$ is contained in at most a $5$-dimensional family of quartics that are singular at the points $x_1,x_2\in C$ and contain $6$ lines. We specialize $C$ to the curve $\Gamma+N_3+\cdots+ N_6\in |R|$ and consider the maps \eqref{forza} and \eqref{roma} in the proof of Proposition \ref{finale}. Let 
\begin{equation*}
  r': \Ker \rho \longrightarrow H^0(\oplus_{i=3}^6 \O_{N_i}(2))\cong \CC^{12},
\end{equation*}
 be the composition of \eqref{roma} with the natural projection to $H^0(\oplus_{i=3}^6 \O_{N_i}(2))$. As the unicity statement in Proposition \ref{finale}(i) implies both that $\Ker \rho$ is $18$-dimensional and that $r$ is surjective, we conclude that $r'$ is surjective as well and that $\PP(\Ker r')=\PP^5$. Since the latter space parametrizes quartics containing $\Gamma+N_3+\cdots+ N_6$ and singular at $x_1,x_2$, we conclude that $r_{4,2}$ is dominant and $\R_{4,2}$ is unirational.\color{black}
 
 In order to study $r_{3,6}'$, we map a general $(S,M,H)\in \F_{15}^{\n,ns}$ to $\PP^4$ by
$$\varphi_{R}:S\longrightarrow \overline{S}\subset \mathbb P^4,$$
so that the image $\overline{S}$ has $6$ nodes $x_3,\ldots, x_8$, contains $2$ lines $N_1,N_2$, and is the complete intersection of a quadric $Q$ and a cubic $Y$. Since a general curve in $|R|$ is not hyperelliptic by \cite[Prop. 5.6]{DL}, the quadric $Q$ has to be smooth.  Let $q(z)=0$ and $f(z)=0$ be the equations defining $Q$ and $Y$, respectively. By possibly replacing $Y$ with a cubic of equation $f(z)+l(z)q(z)=0$ for some linear form $l(z)$, we can assume that $Q$ and $Y$ are everywhere transverse. As a consequence, $Y$ has to be singular at the $6$ nodes of $\overline{S}$. The dominance of $r_{3,6}'$ follows once we show that, for a general $C'\in |R'|$, the curve $C'\subset \PP(H^0(\omega_{C'}\otimes M)^\vee)=\PP^4$ is contained in a $2$-dimensional family of complete intersections $Q_t\cap Y_t$ of a quadric and a cubic containing $2$ lines and such that $Y_t$ is singular at the $6$ marked points $x_3,\ldots,x_8\in C'$. The curve $C'$ is contained in a unique cubic (that thus coincides with $Y$) passing through $x_3,\ldots, x_8$. Indeed, any such cubic different from $Y$ would cut out on $\overline{S}$ a divisor whose strict transform in $S$ lies in the linear system $|3R-R'-N_3-\cdots-N_8|$, which is empty. Analogously, since 
$|2R-R'|$ is a pencil, $C'$ is contained in a net of quadrics. In conclusion, $C'$ is contained in a $2$-dimensional family of complete intersections $Q_t\cap Y_t$ as above, and this proves both the dominance of $r_{3,6}'$ and the unirationality of $\R_{3,6}$.
\end{proof}

\section{The case $\R_{3,4}$}

This section aims to prove the following result:

\begin{thm}\label{ale}
The moduli spaces $\widehat{\F}_{13}^{\n,ns}$, $\F_{13}^{\n,ns}$ and $\widehat\P_{3}$ are unirational, and the map $r_{3,4}$ is dominant. In particular, the moduli space $\R_{3,4}$ is unirational.
\end{thm}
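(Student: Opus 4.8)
The plan is to follow the template of the proofs of Theorems~\ref{thm:4,4} and~\ref{brevetto}, the projective model $\overline{S'}\subset\PP^3$ being now a quartic surface. For a general $(S,M,H,R)\in\widehat{\F}_{13}^{\n,ns}$ one has $g=g'=3$, and $\varphi_{R'}:S\to\overline{S'}\subset\PP^3$ contracts $N_1,\dots,N_4$ to four double points $x_1,\dots,x_4$ of the quartic $\overline{S'}$ and sends $N_5,\dots,N_8$ to four lines. First I would introduce the divisor $\Gamma:=R-N_5-N_6-N_7$ and prove, arguing as in Lemma~\ref{lemma:Juve-15-3} with the aid of the auxiliary $(-2)$-curve $\Gamma':=R'-N_1-N_2-N_3$ (note $\Gamma\cdot\Gamma'=0$), that $\Gamma$ is represented by an irreducible $(-2)$-curve with $\Gamma\cdot R'=3$, $\Gamma\cdot N_i=1$ for $i\le4$, $\Gamma\cdot N_i=2$ for $i=5,6,7$ and $\Gamma\cdot N_8=0$; hence $\varphi_{R'}$ maps $\Gamma$ isomorphically onto a twisted cubic through $x_1,\dots,x_4$, bisecant to $N_5,N_6,N_7$ and disjoint from $N_8$. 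In the same lemma I would record, by the usual ``big and nef'' arguments, that $h^0\bigl(\O_S(4R'-2(N_1+\dots+N_4))\bigr)=18$, $h^0\bigl(\O_S(4R'-2(N_1+\dots+N_4)-\Gamma)\bigr)=13$ and $h^0\bigl(\O_S(4R'-2(N_1+\dots+N_4)-\Gamma-(N_5+N_6+N_7))\bigr)=4$.

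Next comes the analogue of Propositions~\ref{preparatoria4,4} and~\ref{finale}: for a fixed twisted cubic $\Gamma\subset\PP^3$, four general points $x_1,\dots,x_4\in\Gamma$ and three general bisecant lines $N_5,N_6,N_7$, the quartics of $\PP^3$ singular at $x_1,\dots,x_4$ and containing $\Gamma\cup N_5\cup N_6\cup N_7$ form a $4$-dimensional linear system; for a general member $\overline{S'}$ the minimal desingularization $\nu:S\to\overline{S'}$ is a marked non-standard Nikulin surface of genus $13$, with $N_1,\dots,N_4$ the exceptional curves, $R':=\nu^*\O(1)$, $R:=\O_S(\Gamma+N_5+N_6+N_7)$, eighth curve $N_8:=2\Gamma-2R'+N_1+\dots+N_7$ and $M:=(N_1+\dots+N_8)/2=\Gamma+N_1+\dots+N_7-R'$; and the construction sweeps out a dense open subset of $\widehat{\F}_{13}^{\n,ns}$. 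The dimension count is obtained as in Proposition~\ref{preparatoria5,2}: setting $\mathfrak n:=x_1+\dots+x_4$, one has the restriction maps
\[\rho:H^0(\I^2_{\mathfrak n/\PP^3}(4))\longrightarrow H^0(\O_\Gamma(4)(-2\mathfrak n))\cong\CC^5\]
and $r:\Ker\rho\to H^0\bigl(\oplus_{i=5}^7\O_{N_i}(4-2)\bigr)\cong\CC^9$, so that $\dim\Ker r\ge19-5-9=5$; one pins down equality (hence $\dim\Ker\rho=14$ and the surjectivity of $r$) by specializing to the configuration arising from a Nikulin surface, factoring $r$ through restriction to $\overline{S'}$ and invoking the three cohomology values above. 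That $N_8$ is then irreducible, equivalently that $S$ has Picard number $9$ and so is a genuine non-standard Nikulin surface, follows as in Propositions~\ref{ita} and~\ref{preparatoria} once the constructed family of $K3$ surfaces is seen to have dimension $\ge11$; indeed the parameter count is $\dim\Sym^4\Gamma+3\dim\Sym^2\Gamma+4-\dim\Aut(\Gamma)=4+6+4-3=11=\dim\widehat{\F}_{13}^{\n,ns}$.

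For the unirationality of $\widehat{\F}_{13}^{\n,ns}$, $\F_{13}^{\n,ns}$ and $\widehat\P_3$ I would argue exactly as in the proof of Theorem~\ref{thm:4,4}: identify $\Sym^2(\Gamma)\cong\PP^2$ via a conic $\Delta$, so that $\Aut(\Gamma)\cong\PGL(2)$ becomes the stabilizer of $\Delta$ in $\PGL(3)$, and build over an open subset of $\Hilb^3(\PP^2)\times\Hilb^4(\PP^1)$ (the three bisecant lines and the four points on $\Gamma$) the $\PP^4$-bundle $\MM$ whose fibre is the linear system of quartics above. Since for a generic point the stabilizer in $\PGL(2)$ of the three bisecant lines alone is already trivial, Kempf's descent lemma lets $\MM$ descend to a $\PP^4$-bundle over $\bigl(\Hilb^3(\PP^2)\times\Hilb^4(\PP^1)\bigr)/\PGL(2)$, which is birational to a $\PP^4$-bundle over $\Hilb^3(\PP^2)/\PGL(2)$; so the $14$-dimensional $\MM/\PGL(2)$ is rational as soon as $\Hilb^3(\PP^2)/\PGL(2)$ is. The latter can be obtained by enlarging the configuration by one point of $\Delta$ (coming from one of the $x_i$) to a projective frame of $\PP^2$ and using $\PGL(3)/\PGL(2)\cong|\O_{\PP^2}(2)|\cong\PP^5$ as in Theorems~\ref{thm:4,4}, \ref{brevetto} and~\ref{thm:5,2}, or alternatively by identifying a suitable quotient of point-configurations on $\Delta\cong\PP^1$ with a known rational moduli space, as in the proof of Theorem~\ref{seidue}. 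This yields the unirationality (indeed stable rationality of a finite cover) of $\F_{13}^{\n,ns}$, hence of $\widehat{\F}_{13}^{\n,ns}$ and $\widehat\P_3$.

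Finally, to show that $r_{3,4}:\widehat\P_3\to\R_{3,4}$ is dominant I would specialize, as in the other theorems, a general $(S,M,H,R,C)\in\widehat\P_3$ to the reducible curve $C=\Gamma+N_5+N_6+N_7\in|R|$: under $C\hookrightarrow\PP(H^0(\omega_C\otimes M)^\vee)=\PP^3$ the quartics singular at $x_1,\dots,x_4$ and containing $C$ form precisely the $4$-dimensional family of the proposition above (whose dimension statement forces $\dim\Ker\rho=14$ and the surjectivity of the relevant restriction map), and since $\dim\widehat\P_3-\dim\R_{3,4}=14-10=4$ this shows that the general fibre of $r_{3,4}$ has the expected dimension; hence $r_{3,4}$ is dominant and $\R_{3,4}$ is unirational. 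The step I expect to be the main obstacle is exactly the dimension statement in the proposition---equivalently the surjectivity of $\rho$ and $r$---which, as in Propositions~\ref{finale} and~\ref{preparatoria5,2}, cannot be read off a naive count and must be obtained by degenerating to the Nikulin locus, using the cohomology of the divisors in the $\Gamma$-lemma and the factorization of $r$ through restriction to $\overline{S'}$; a secondary delicate point is the Kempf-descent set-up for the rationality of $\F_{13}^{\n,ns}$, since the four points on $\Gamma\cong\PP^1$ have a non-trivial (Klein four) stabilizer and the descent must therefore be arranged over the bisecant-line factor.
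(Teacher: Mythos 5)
Your proof follows essentially the same route as the paper: you introduce the $(-2)$-curve $\Gamma=R-N_5-N_6-N_7$, record the cohomology of the auxiliary line bundles on $S$, produce the $4$-dimensional linear system of quartics through $\Gamma\cup N_5\cup N_6\cup N_7$ singular at $x_1,\dots,x_4$ by restricting $H^0(\I^2_{\mathfrak n/\PP^3}(4))$ first to $\Gamma$ and then to the bisecant lines, pin down equality by degenerating to the Nikulin locus and using the factorisation through restriction to $\overline{S'}$, parametrize by a $\PP^4$-bundle over $\Hilb^3(\PP^2)\times\Hilb^4(\PP^1)$ divided by $\PGL(2)$, apply Kempf descent, and finish by specialising $C$ to $\Gamma+N_5+N_6+N_7$ to prove dominance. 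Your cohomology values ($18,13,4$, with $h^0(\I^2_{\mathfrak n/\PP^3}(4))=19$) match what the paper's degeneration argument yields, and your parameter count $4+6+4-3=11$ agrees with the paper's $7+4=11$.

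Two places where you are in fact more careful than the paper itself. First, the paper's statement that $\Hilb^3(\PP^2)\times\Hilb^4(\PP^1)\sslash\PGL(2)$ is \emph{itself} a degree-$4$ cover of $\widehat{\F}_{13}^{\n,ns}$ is dimensionally off ($7$ vs.\ $11$); one must include the $\PP^4$-bundle of quartics before taking the quotient, exactly as you do with your $\MM$. Second, the paper's invocation of ``Kempf's descent lemma as in the proof of Theorem~\ref{thm:4,4}'' glosses over the fact that the argument there relied on $\Hilb^4(\PP^2)/\PGL(3)$ being a point, which fails for $\Hilb^3(\PP^2)$; your observation that the generic stabiliser of a $4$-point scheme on $\Gamma\cong\PP^1$ is a Klein four-group, so the descent must run over the $\Hilb^3(\PP^2)$-factor, is the right correction. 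You then still owe the rationality of $\Hilb^3(\PP^2)/\PGL(2)$, which you only sketch; a clean way to close this is to descend one of the four points $x_i\in\Gamma$ together with the three bisecant lines, i.e.\ to use $\bigl(\Hilb^3(\PP^2)\times\Gamma\bigr)/\PGL(2)\cong\Hilb^3(\PP^2)/B$ where $B$ is a Borel subgroup, which is rational by Rosenlicht's theorem, and then let the remaining $\Hilb^3(\PP^1)$ and the $\PP^4$-factor descend as projective bundles by Kempf. (Also a small typo: $\MM$ has dimension $14$, so $\MM\sslash\PGL(2)$ has dimension $11$, not $14$.)
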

Take a general $(S,M,H)\in \F_{13}^{\n,ns}$, so that $g(R)=g(R')=3$. The morphism
$$
\varphi_{R'}:S\longrightarrow \overline{S'}\subset\PP^4
$$
maps $S$ onto a quartic $\overline{S'}$ containing $4$ lines $N_5,\ldots,N_8$ and having $4$ nodes $x_1,\ldots,x_4$ at the points where the curves $N_1,\ldots,N_4$ are contracted. The divisor
$$\Gamma:=R-N_5-N_6-N_7$$
is an irreducible ($-2$)-curve on $S$, which is sent by $\varphi_{R'}$ to a twisted cubic  passing through $x_1,\ldots,x_4$; the lines $N_5,N_6,N_7$
are bisecant to $\Gamma$, while $N_8$ is disjoint from it.

\begin{prop}\label{finale2}
Let $\Gamma\subset \PP^3$ be a fixed twisted cubic. Take $4$ general points $x_1,\ldots,x_4\in \Gamma$ and $3$ general bisecant lines $N_5,N_6,N_7$ to $\Gamma$. Then the following hold:
\begin{itemize}
\item[(i)] There exists a $4$-dimensional family of quartics $\overline{S'}\subset\mathbb P^3$ containing $\Gamma,N_5,N_6,N_7$ and singular at $x_1,\ldots,x_4$.
\item[(ii)] The minimal desingularization $S$ of a quartic $\overline{S'}\subset\mathbb P^3$ as in (i) is a marked non-standard Nikulin surface of genus $13$.
\item[(iii)] By varying $x_1,\ldots,x_4\in \Gamma$ and $n_5, n_6,n_7\in \Sym^2\Gamma$, one obtains all members of a dense open subset of $\widehat{\F}_{13}^{\n,ns}$.
\end{itemize}
\end{prop}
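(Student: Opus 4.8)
The plan is to follow the scheme of Propositions~\ref{preparatoria} and~\ref{finale}, the latter being the closest analogue since there too the projective model is a nodal quartic surface in $\PP^{3}$. For part~(i) I would first bound the family from below. Writing $\mathfrak n:=x_1+\dots+x_4$, the four double points impose at most sixteen conditions on $H^0(\O_{\PP^{3}}(4))\cong\CC^{35}$, so $h^0(\mathcal I^2_{\mathfrak n/\PP^{3}}(4))\geq 19$. A quartic singular at $\mathfrak n$ vanishes to order $\geq 2$ at each $x_i$ along the twisted cubic $\Gamma$, hence restriction to $\Gamma$ yields
\[ \rho\colon H^0(\mathcal I^2_{\mathfrak n/\PP^{3}}(4))\longrightarrow H^0(\O_{\Gamma}(4)(-2\mathfrak n))\cong H^0(\O_{\PP^{1}}(4))\cong\CC^{5},\]
so $\dim\Ker\rho\geq 14$; restricting a member of $\Ker\rho$ further to $N_5,N_6,N_7$, on each of which it already vanishes at the two points of $N_i\cap\Gamma$, gives
\[ r\colon\Ker\rho\longrightarrow H^0\bigl(\textstyle\bigoplus_{i=5}^{7}\O_{N_i}(2)\bigr)\cong\CC^{9},\]
whence $\dim\Ker r\geq 5$, and $\PP(\Ker r)$, which parametrizes exactly the quartics through $\Gamma,N_5,N_6,N_7$ that are singular at $x_1,\dots,x_4$, has dimension $\geq 4$.

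For the reverse inequality I would specialize $x_1,\dots,x_4$ and $N_5,N_6,N_7$ to the four nodes and three bisecant lines of the projective model $\overline{S'}=\varphi_{R'}(S)$ of a general $(S,M,H,R)\in\widehat{\F}_{13}^{\n,ns}$; then $\overline{S'}$ is itself such a quartic, and since no other quartic contains the quartic surface $\overline{S'}$, the restriction $\Ker r\to(\Ker r)|_{\overline{S'}}$ has one-dimensional kernel. Arguing as in Lemma~\ref{lemma:Juve-15-3}, one first records that on a general genus $13$ non-standard Nikulin surface
\[ h^0\bigl(\O_S(4R'-2(N_1+\dots+N_4))\bigr)=18,\quad h^0\bigl(\O_S(4R'-2(N_1+\dots+N_4)-\Gamma)\bigr)=13,\]
\[ h^0\bigl(\O_S(4R'-2(N_1+\dots+N_4)-\Gamma-N_5-N_6-N_7)\bigr)=4,\]
by checking that the divisors involved are big and nef; the last of these identifies $(\Ker r)|_{\overline{S'}}$, so $\dim\Ker r=5$ on this configuration, and upper semicontinuity forces $\dim\Ker r\leq 5$, hence equality, for general $x_i$ and $n_j$. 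Since~(i) already holds at the Nikulin configuration, the family of quartics just constructed contains the projective models of general genus $13$ Nikulin surfaces; as ``$\omega$ trivial, four nodes exactly at $x_1,\dots,x_4$, and no other singularity'' is an open condition on the irreducible total family, a general $\overline{S'}$ as in~(ii) enjoys these properties.

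For part~(ii), let $\nu\colon S\to\overline{S'}$ be the minimal desingularization; then $S$ is a $K3$ surface. Put $R':=\nu^{*}\O_{\overline{S'}}(1)$, let $N_1,\dots,N_4$ be the $(-2)$-curves over the nodes, $N_5,N_6,N_7$ the strict transforms of the three lines, $\Gamma$ the strict transform of the twisted cubic, and set $N_8:=N_1+\dots+N_7+2\Gamma-2R'$. Using $\Gamma^2=-2$, $\Gamma\cdot R'=3$ and the intersections of $\Gamma$ with the $N_i$ recorded before the statement, a short computation gives $N_8^{2}=-2$, $N_8\cdot N_j=0$ for $j\leq 7$ and $N_8\cdot R'=1$, so $N_8$ is effective by Riemann--Roch and Serre duality, and $\sum_{i=1}^{8}N_i=2(N_1+\dots+N_7+\Gamma-R')$ is $2$-divisible in $\Pic S$. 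Setting $M:=\tfrac12\sum_{i=1}^{8}N_i$, $R:=\nu^{*}\O_{\overline{S'}}(\Gamma+N_5+N_6+N_7)$ and $H:=R+R'+M$, one checks $(S,M,H,R)\in\widehat{\F}_{13}^{\n,ns}$ provided $N_8$ is irreducible; exactly as in the proof of Proposition~\ref{preparatoria}, were $N_8$ reducible the Picard number of $S$ would exceed $9$ and the constructed family of $K3$ surfaces would have dimension $<11$, so irreducibility of $N_8$, and with it~(ii), follows once the family is shown to be $11$-dimensional.

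Finally, the data $(x_1,\dots,x_4,n_5,n_6,n_7)$ range in a $10$-dimensional variety, and by~(i) each choice carries a $4$-dimensional family of quartics $\overline{S'}$, giving $14$ parameters; since a projectivity of $\PP^{3}$ fixing $\Gamma$ maps such a quartic to a projectively equivalent one and $\Aut(\Gamma)\cong\PGL(2)$ is $3$-dimensional, the construction produces an $11$-dimensional family of $K3$ surfaces, which also completes the argument for~(ii). As this family reaches a general genus $13$ Nikulin surface and $\widehat{\F}_{13}^{\n,ns}$ is irreducible of dimension $11$, the induced rational map to $\widehat{\F}_{13}^{\n,ns}$ is dominant, proving~(iii). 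The step I expect to be the genuine obstacle is the exact count in~(i): the lower bounds are formal, but the equality rests on the cohomology of the above divisors on the general genus $13$ Nikulin surface, where verifying bigness and nefness is subtler than in the genus $17$ case because $R'-N_1-\dots-N_4$ has self-intersection $-4$ rather than $-2$, so the companion $(-2)$-curves must be chosen with more care (classes such as $R'-N_i-N_j-N_k$ with $\{i,j,k\}\subset\{1,2,3,4\}$, together with $\Gamma$, should serve).
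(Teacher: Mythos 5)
Your proposal is correct and follows essentially the paper's own argument, which is extremely terse for this section; all your numerical claims (the restriction chain $19 \to 14 \to 5$, the K3 cohomology values $18$, $13$, $4$, the intersection numbers yielding $N_8^2=-2$, $N_8\cdot N_j=0$, $N_8\cdot R'=1$, $H^2=24$, $H\cdot M=0$, and the final parameter count $10+4-3=11$) check out and match what the paper asserts without proof. The concern you flag at the end, namely that $R'-N_1-\cdots-N_4$ now has self-intersection $-4$ so the genus-$17$ decomposition into a sum of $(-2)$-curves does not carry over verbatim, is real but easily handled: one verifies directly that $D:=4R'-2\sum_{i=1}^4 N_i-\Gamma-N_5-N_6-N_7$ has $D^2=4$ and is nef against $R$, $R'$, $\Gamma$ and all the $N_i$ on a very general $(S,M,H,R)\in\widehat{\F}_{13}^{\n,ns}$, whence $h^0(D)=4$ by Riemann--Roch, which is precisely the paper's unelaborated equality $\dim|4R'-\Gamma-N_5-N_6-N_7-2N_1-\cdots-2N_4|=3$.
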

\begin{proof}
Set $\mathfrak n:=x_1+\cdots+x_4$. By restricting $H^0(\mathcal I^2_{\mathfrak{n}/\PP^3}(4))$ first to $\Gamma$ and then further to $N_5+N_6+N_7$ as in the proof of Proposition \ref{finale}, one proves the existence of a family $\F$ of quartics $\overline{S'}$ as in (i) of dimension $t\geq 4$, which implies (i) if we show that $t=4$. This is proved by specialization to nodes and lines on the projective model of a Nikulin surface $S$, where the equality
$
\dim |4R'-\Gamma-N_5-N_6-N_7-2N_1-2N_2-2N_3-2N_4|=3
$
(which can be proved with the same techniques used several times above) implies the statement.
 Again as in the proof of Proposition \ref{finale}, one shows that the minimal desingularization $S$ of a quartic $\overline{S'}\subset\mathbb P^3$ in the family $\F$ is a non-standard Nikulin surface of genus $13$. Hence (ii) holds and, by varying $x_1,\ldots,x_4\in \Gamma$ and $n_5, n_6,n_7\in \Sym^2\Gamma$ and modding out by the automorphisms of $\Gamma$, one obtains a family of Nikulin surfaces of dimension $7+4= 11$, thus yielding (iii). 
 \end{proof}
 
 \begin{proof}[Proof of Theorem \ref{ale}]
As in the proof of Theorem \ref{brevetto}, we fix a twisted cubic $\Gamma\subset \PP^3$ and identify $ \Hilb^3(\mathbb P^2)\times \Hilb^4(\mathbb P^1)$ with the parameter space for pairs $(\mathfrak l,\mathfrak n)$ with $\mathfrak l$ a set of $3$ bisecant lines to $\Gamma$ and $\mathfrak n$ a set of $4$ unordered point on $\Gamma$. By Proposition \ref{finale2} the quotient $\Hilb^3(\mathbb P^2)\times \Hilb^4(\mathbb P^1)\sslash\PGL(2)$ is birational to a cover $\widetilde \F$ of degree $4$ (corresponding to the choices of $3$ out of $4$ lines) of $\widehat\F_{13}^{\n,ns}$. Again as in the proof of Theorem \ref{thm:4,4} one applies Kempf's descent lemma to conclude that $\widetilde \F$ is rational, so that $\widehat\F_{13}^{\n,ns}$, $\F_{13}^{\n,ns}$ and $\widehat\P_{3}$ are unirational. 

It remains to show that $r_{3,4}$ is dominant, or equivalently, that for general $(S,M,H,C)\in\widehat\P_{3}$, the curve $C\subset \PP(H^0(\omega_C\otimes M)^\vee)=\mathbb P^3$ is contained in a $4$-dimensional family of quartics that are singular at the points $x_1,\ldots,x_4\in C$ and contain $4$ lines. It is then enough to specialize $C$ to $\Gamma+N_5+N_6+ N_7\in |R|$ and apply Proposition \ref{finale2}(i).
 \end{proof}

\section{The cases  $\R_{3,2}$ and $\R_{2,6}$}

We will prove the following result:

\begin{thm}\label{enri}
The moduli spaces $\F_{11}^{\n,ns}$, $\P_{3}$ and $\P'_{2}$ are rational, and the maps $r_{3,2}$, $r_{2,6}'$ are both dominant. In particular, the moduli spaces $\R_{3,2}$ and $\R_{2,6}$ unirational.
\end{thm}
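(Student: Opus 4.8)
Since $11\equiv 3$ modulo $4$, the genera involved are $g:=g(R)=3$ and $g':=g(R')=2$, so that $\dim\F_{11}^{\n,ns}=11$, $\dim\R_{3,2}=8$ and $\dim\R_{2,6}=9$. The rationality of $\F_{11}^{\n,ns}$ already appears in \cite{KLV2}; the plan is to re-obtain it, together with the rationality of $\P_3$ and $\P'_2$ and the dominance of $r_{3,2}$ and $r_{2,6}'$, through an explicit parametrization in the spirit of the previous sections. Two projective models of a general $(S,M,H)\in\F_{11}^{\n,ns}$ will be used: the quartic surface $\varphi_R\colon S\to\overline S\subset\PP^3$, which has six nodes $x_3,\dots,x_8$ (the images of the contracted curves $N_3,\dots,N_8$) and contains the two lines $N_1,N_2$; and the double plane $\varphi_{R'}\colon S\to\PP^2$, branched along a plane sextic with two nodes at $x_1,x_2$ (the images of $N_1,N_2$) and with $N_3,\dots,N_8$ mapping to six tritangent lines. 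The distinguished rational component in the first model is the $(-2)$-curve $\Gamma:=R'-N_1-N_2$, which equals $R-M$ in $\Pic S$ and which $\varphi_R$ maps to a twisted cubic passing through the six nodes $x_3,\dots,x_8$ and meeting each of $N_1,N_2$ at two points; note that $N_1+\cdots+N_8\sim 2(R-\Gamma)$.

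First I would prove, exactly as in Lemmas \ref{lemma:Juve-15-3} and \ref{lemma:incubichesing}, that $\Gamma$ is represented by an irreducible curve with the stated intersection numbers, that the identity $N_1+\cdots+N_8\sim 2(R-\Gamma)$ holds, and that the relevant linear systems on $S$ --- such as $\bigl|4R-(\Gamma+N_1+N_2)-2(N_3+\cdots+N_8)\bigr|$ and its residuals after removing $\Gamma$ and the $N_j$ --- have the expected dimensions; the difference classes are either sums of irreducible $(-2)$-curves or become big and nef after a known fixed part is removed, which makes Riemann--Roch available. Then, mirroring Propositions \ref{finale} and \ref{finale2}, I would establish the converse statement: for a fixed twisted cubic $\Gamma\subset\PP^3$, six general points $x_3,\dots,x_8\in\Gamma$ and two general bisecant lines $N_1,N_2$, the quartic surfaces in $\PP^3$ containing $\Gamma\cup N_1\cup N_2$ and singular at $x_3,\dots,x_8$ form a $4$-dimensional linear system, and the minimal desingularization of a general member is a non-standard Nikulin surface of genus $11$ (the $2$-divisibility of $N_1+\cdots+N_8$ being checked as in the proof of Proposition \ref{ita}, with no auxiliary $(-2)$-curve needed, since all eight of them are already at hand). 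A parameter count $\dim\Sym^6(\Gamma)+2\dim\Sym^2(\Gamma)+4-\dim\Aut(\Gamma)=6+4+4-3=11=\dim\F_{11}^{\n,ns}$, together with the irreducibility of $\F_{11}^{\n,ns}$, then shows that a dense open subset is swept out this way, and it identifies $\F_{11}^{\n,ns}$ birationally with the total space of a $\PP^4$-bundle over an open subset of $\Sym^6(\Gamma)\times\Sym^2(\Sym^2\Gamma)$, modulo the free action of $\PGL(2)=\Aut(\Gamma)$. Applying Kempf's descent lemma \cite{DN} to this free action reduces the rationality of $\F_{11}^{\n,ns}$, $\P_3$ and $\P'_2$ to the rationality of $\bigl(\Sym^6(\PP^1)\times\Sym^2(\Sym^2\PP^1)\bigr)/\PGL(2)$, which follows by projecting to the classical (rational) field of invariants of a binary sextic and handling the remaining factor as in the proof of Theorem \ref{thm:4,4}; this recovers the result of \cite{KLV2}.

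It then remains to prove that $r_{2,6}'$ and $r_{3,2}$ are dominant. For $r_{2,6}'$ I would use that a smooth $C'\in|R'|$ is embedded by $\varphi_R|_{C'}=|\omega_{C'}\otimes M|_{C'}|$ as a degree-$5$ genus-$2$ curve in $\PP^3$ passing through the six nodes of $\overline S$ and disjoint from $N_1,N_2$; specializing $C'$ to a suitable reducible member of $|R'|$ and invoking the converse Proposition gives that the fiber of $r_{2,6}'$ over a general point of $\R_{2,6}$ has dimension $\dim\P'_2-\dim\R_{2,6}=4$, the expected value, so $r_{2,6}'$ is dominant. For $r_{3,2}$ the relevant model is instead the double plane: a smooth $C\in|R|$ is mapped by $\varphi_{R'}|_C=|\omega_C\otimes M|_C|$ to a plane quintic through $x_1,x_2$, and one must show that the fiber of $r_{3,2}$ over a general $(C,x_1+x_2,M^\vee|_C)$ is $6$-dimensional $(=\dim\P_3-\dim\R_{3,2})$; concretely, that the branch sextic together with its two nodes and six tritangent lines can be reconstructed, up to a $6$-dimensional family, from the quintic $C$ and the two marked points. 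Once both maps are dominant, the unirationality of $\R_{3,2}$ and $\R_{2,6}$ follows at once from the rationality of $\P_3$ and $\P'_2$ (and is, of course, compatible with the stronger rationality of $\R_{2,6}$ established in \cite{NOV}).

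The main obstacle I expect is precisely the $g'=2$ side relevant to $r_{3,2}$: because $\overline{S'}$ is now a double cover of $\PP^2$ rather than a subvariety of a higher-dimensional projective space, the quadric/cubic machinery of Section \ref{figo} does not apply directly, and one must argue with the branch sextic and its tritangent lines. A cleaner route could be to deduce the dominance of $r_{3,2}$ from the $\PP^3$ quartic model as well, by analyzing the hyperplane sections of $\overline S$ cut out by the linear systems attached to $\Gamma$; in either form, the crux is the precise cohomology of $\O_S\bigl(4R-(\Gamma+N_1+N_2)-2(N_3+\cdots+N_8)\bigr)$ and of its residuals, which controls the dimension of the family of six-nodal quartics and hence the whole parameter count.
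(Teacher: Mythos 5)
For the rationality of $\F_{11}^{\n,ns}$, $\P_3$, $\P'_2$ and the dominance of $r_{2,6}'$, your approach matches the paper's: you introduce $\Gamma := R' - N_1 - N_2$, identify $\varphi_R(\Gamma)$ with a twisted cubic in $\PP^3$ through the six nodes of $\overline S$ and bisecant to $N_1,N_2$, state the converse (the paper's Proposition \ref{finale1}) that a general member of the $4$-dimensional family of $6$-nodal quartics through such a configuration desingularizes to a non-standard genus-$11$ Nikulin surface, and do the parameter count $6+4+4-3=11$; dominance of $r_{2,6}'$ then follows by specializing $C'$ to $\Gamma+N_1+N_2\in|R'|$ and reading off the $4$-dimensional fiber from that family. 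This part is sound and consistent with the paper (which also quotes \cite[Thm.\ 1.2]{KLV2} for the rationality as a shortcut).

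For the dominance of $r_{3,2}$, however, you do not actually give an argument: you correctly observe that the quadric/cubic machinery of Section~\ref{figo} does not apply (since $\varphi_{R'}$ now maps to $\PP^2$), and you sketch two possible directions, but neither is carried out, and the line bundle $\O_S\bigl(4R-(\Gamma+N_1+N_2)-2(N_3+\cdots+N_8)\bigr)$ you single out as ``the crux'' is not what drives the paper's proof. The paper instead imports the explicit parametrization of $\F_{11}^{\n,ns}$ from \cite[\S5]{KLV2}: it realizes $S$ as a divisor of class $(1,2)$ in the threefold $T=\mathrm{Bl}_\gamma\PP^3\subset\PP^2\times\PP^3$ containing a fixed rational curve $\Gamma_l$; computes from the ideal sequence of $C\cup\Gamma_l\subset S\subset T$ that $h^0\bigl(T,I_{C\cup\Gamma_l/T}(1,2)\bigr)=5$ (using $I_{C\cup\Gamma_l/S}(1,2)\simeq R+N_1+N_2$ with $N_1,N_2$ as base components), so the Nikulin surfaces through a fixed embedded $C$ form a $\PP^4$; and then shows, factoring the embedding through a degree-$6$ Del Pezzo $Y\in|\O_T(0,1)|$, that the embeddings of $C$ into $T$ compatible with the marked data form a $2$-dimensional family up to $\Aut(T)=\Aut(\gamma)$. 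Adding $4+2=6$ gives the expected fiber dimension. Your first suggested route, reconstructing the branch sextic with its two nodes, $4$-tangent conic and tritangent lines, appears in the source only in commented-out form and was not completed; without something like the $T\subset\PP^2\times\PP^3$ parametrization, the dominance of $r_{3,2}$ is a genuine gap in your proposal.
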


The rationality of $\F_{11}^{\n,ns}$ has already been proved in \cite[Thm.1.2]{KLV2}. However, in order to prove the dominance of the moduli maps we will provide alternative parametrizations of $\F_{11}^{\n,ns}$. We start with the case of $r_{2,6}'$ because of its similarity to the previous section. Indeed, we consider the morphism
$$\varphi_{R}:S\longrightarrow \overline{S}\subset \mathbb P^3,$$
whose image  $\overline{S}$ is a quartic containing $2$ lines $N_1,N_2$ and having $6$ double points $x_3,\ldots,x_8$ arising from the contraction of $N_3,\ldots,N_8$. The divisor
$$
\Gamma:=R'-N_1-N_2,
$$
is represented by an irreducible ($-2$)-curve satisfying $\Gamma\cdot R=3$, so that the image of $\Gamma$ under $\varphi_{R}$ is a twisted cubic passing through $x_3,\ldots,x_8$ and bisecant to the lines $N_1,N_2$.

\begin{prop}\label{finale1}
Fix a twisted cubic $\Gamma\subset \PP^3$. Take $6$ general points $x_3,\ldots,x_8\in \Gamma$ and $2$ general bisecant lines $N_1,N_2$ to $\Gamma$. Then the following hold:
\begin{itemize}
\item[(i)] There exists a $4$-dimensional family of quartics $\overline{S}\subset\mathbb P^3$ containing $\Gamma,N_1,N_2$ and singular at $x_3,\ldots,x_8$.
\item[(ii)] The minimal desingularization $S$ of a quartic $\overline{S}\subset\mathbb P^3$ as in (i) is a non-standard Nikulin surface of genus $11$.
\item[(iii)] By varying $x_3,\ldots,x_8\in \Gamma$, $n_1, n_2\in \Sym^2\Gamma$, one obtains all Nikulin surfaces in a dense open subset of $\F_{11}^{\n,ns}$.
\end{itemize}
\end{prop}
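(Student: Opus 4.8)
My plan follows the pattern of Propositions~\ref{finale} and~\ref{finale2}, the crux being part~(i): one must bound above and below the dimension of the family $\F$ of quartics through $\Gamma\cup N_1\cup N_2$ singular at $\mathfrak n:=x_3+\cdots+x_8$. For the upper bound I would specialize $x_3,\dots,x_8$ and $N_1,N_2$ to the six nodes and two lines of the $R$-model $\overline S\subset\PP^3$ of a general $(S,M,H)\in\F_{11}^{\n,ns}$ (recall $g(R)=3$, $g(R')=2$). A quartic as in the statement not containing $\overline S$ cuts out on $\overline S$ a divisor whose strict transform on $S$ lies in $|3R'-2N_1-2N_2|$, since $3R'-2N_1-2N_2\sim 4R-\Gamma-N_1-N_2-2(N_3+\cdots+N_8)\sim 2\Gamma+R'$ (using $\Gamma\sim R'-N_1-N_2$ and $M\sim R-\Gamma$). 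As $(2\Gamma+R')\cdot\Gamma=-2$, the curve $\Gamma$ is a fixed component, so $|2\Gamma+R'|=\Gamma+|R'+\Gamma|$, and $R'+\Gamma=2R'-N_1-N_2$ is big and nef — one checks nonnegativity against $N_1,\dots,N_8$ and $\Gamma$, exactly as in the "Moreover" parts of Lemmas~\ref{lemma:Juve-15-4} and~\ref{lemma:Juve-15-3} — whence $h^0(2\Gamma+R')=h^0(R'+\Gamma)=\chi(R'+\Gamma)=4$. Together with $\overline S$ itself this gives $\dim\F\le4$, and the same specialization shows that a general quartic as in~(i) degenerates to the projective model of a general Nikulin surface.

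For the lower bound I would proceed as in the proof of Proposition~\ref{finale}, restricting $H^0(\I^2_{\mathfrak n/\PP^3}(4))$ first to $\Gamma$ (the image lies in the line $H^0(\O_\Gamma(4)(-2\mathfrak n))$) and then to $N_1+N_2$ (a quartic containing $\Gamma$ restricts to a section of $\O_{N_1}(2)\oplus\O_{N_2}(2)$, as the $N_i$ are bisecant to $\Gamma$), and observing that the cubic $g$ defining $\Sec\Gamma$ is singular along $\Gamma$ and contains every secant line, so that the quartics $gL$, $L$ linear, already provide a $\PP^3\subset\F$. Combined with the bound above — equivalently, with the remark that the $26$-dimensional variety of $R$-models of genus~$11$ non-standard Nikulin surfaces fibres over the $22$-dimensional variety of configurations $(\Gamma,\mathfrak n,\{N_1,N_2\})$ with fibre contained in $\F$ — this forces $\dim\F=4$, completing~(i).

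For~(ii): a general $\overline S\in\F$ is a quartic surface, hence has trivial dualizing sheaf, and by the degeneration above its only singularities are the nodes $x_3,\dots,x_8$; so its minimal resolution $\nu:S\to\overline S$ is a K3 surface. Let $N_3,\dots,N_8$ be the exceptional $(-2)$-curves, $N_1,N_2$ the strict transforms of the lines, $\Gamma$ the strict transform of the twisted cubic (an irreducible $(-2)$-curve with $R\cdot\Gamma=3$, $\Gamma\cdot N_i=1$ for $i\ge3$ and $\Gamma\cdot N_i=2$ for $i=1,2$), and $R:=\nu^*\O_{\overline S}(1)$; the eight $N_i$ are pairwise disjoint by generality. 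Exactly as in the proof of Proposition~\ref{ita}, the class $2R-2\Gamma-\sum_{i=1}^8N_i$ has self-intersection $0$ and is orthogonal to the ample class $R$, hence vanishes by the Hodge index theorem, so $\sum_{i=1}^8N_i\sim2M$ with $M:=R-\Gamma$. Setting $H:=2R+N_1+N_2$ one checks $H\cdot M=0$, $H^2=20$, and $H$ primitive in $\Pic S$, so $(S,M,H)$ is a primitively polarized Nikulin surface of genus~$11$; moreover $\ZZ H\oplus_{\perp}\mathbf{N}=2\ZZ R\oplus\mathbf{N}$ has index $2$ in $\ZZ R\oplus\mathbf{N}$, so $S$ is of non-standard type. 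For~(iii): the construction depends on $6+4-3+4=11$ parameters ($6$ for $x_3,\dots,x_8\in\Gamma$, $4$ for $n_1,n_2\in\Sym^2\Gamma\simeq\PP^2$, minus $3$ for $\Aut(\Gamma)\simeq\PGL(2)$, plus $4$ for the quartic in $\F$); since $\dim\F_{11}^{\n,ns}=11$ and this irreducible space receives our family, the general member has Picard number exactly $9$, whence $\Pic S=\ZZ R\oplus\mathbf{N}$ — this completes~(ii) — and the moduli map is dominant onto a dense open subset.

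The main obstacle is the exact dimension in~(i): the value $4$ exceeds by one what a naive count on $\PP^3$ predicts, the discrepancy being $h^1(S,3R'-2N_1-2N_2)=1$, caused by $\Gamma$ appearing as a fixed component, which forces $h^0(3R'-2N_1-2N_2)=h^0(2R'-N_1-N_2)$ rather than $\chi(3R'-2N_1-2N_2)$. Proving that $2R'-N_1-N_2=R'+\Gamma$ is big and nef — ruling out $(-2)$-curves on the general $S$ meeting it negatively, in the style of the cohomology computations of Lemmas~\ref{lemma:Juve-15-4} and~\ref{lemma:Juve-15-3} — is the technical point on which both~(i) and the parameter count in~(iii) rest.
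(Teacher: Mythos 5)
Your proposal follows the same overall strategy as the paper: compute the dimension of the family $\F$ on a Nikulin specialization via the linear system $|3R'-2N_1-2N_2|=\Gamma+|R'+\Gamma|$ (your normalization matches the paper's $|R'+2\Gamma|$ after the identity $R'+2\Gamma = 3R'-2N_1-2N_2$), prove that $R'+\Gamma$ is big and nef with $(R'+\Gamma)^2=4$ so that $h^0=4$, hence $\dim\F\le4$ there, and then close the argument with a parameter count. That is essentially the paper's proof.

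There is, however, one genuine error. Your lower-bound argument invokes ``the cubic $g$ defining $\Sec\Gamma$,'' but for a twisted cubic $\Gamma\subset\PP^3$ the secant variety is all of $\PP^3$ (it has degree $\binom{2}{2}=1$): a general point of $\PP^3$ lies on a chord of $\Gamma$. There is therefore no cubic hypersurface $g$ vanishing on $\Sec\Gamma$, no cubic at all lying in $\I_\Gamma^2$ (since $\I_\Gamma$ is generated by three quadrics and $\I_\Gamma^{(2)}=\I_\Gamma^2$ starts in degree $4$), and no family $\{gL\}\subset\F$. You appear to have imported the picture from \S\ref{federico}, which concerns rational normal curves in $\PP^n$ with $n\ge4$, where $\Sec\Gamma$ is indeed a proper hypersurface of degree $\binom{n-1}{2}$. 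The paper instead obtains the crude bound $t\ge3$ from the straight linear-algebra count (restricting $H^0(\I^2_{\mathfrak n/\PP^3}(4))$ to $\Gamma$ and to $N_1+N_2$), which you also sketch correctly before the $gL$ detour, so nothing is lost if you simply delete that sentence.

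The other point worth flagging is that your dimension count (``the $26$-dimensional variety of $R$-models fibres over the $22$-dimensional configuration space'') silently asserts that the map to the configuration space is \emph{dominant}. That is exactly the content of the paper's statement that the locus $\Z^{\n}\subset\Sym^2(\PP^2)\times\Sym^6(\Gamma)$ of Nikulin configurations is dense, and it is deduced, not assumed: from $\dim\F_{11}^{\n,ns}=11$ and the inequality $t\le4$ one gets $11\le\dim\Z^{\n}+4-\dim\Aut(\Gamma)\le10+4-3=11$, forcing $\dim\Z^{\n}=10$ (density) and $t=4$ simultaneously. Making this chain of inequalities explicit would tighten your argument; as written you use $t=4$ and dominance to support each other without quite closing the loop. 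The rest of your proof of (ii) (triviality of $2R-2\Gamma-\sum N_i$ by Hodge index, identification of $M$, $H$, and the index-$2$ embedding of $\Lambda_h$) and of (iii) (the $6+4-3+4=11$ count with Picard number $9$ forced by irreducibility of $\F_{11}^{\n,ns}$) agrees with the paper.
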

\begin{proof}
  Set $\mathfrak n:=x_3+\cdots+x_8$. Again as in the proof of Proposition \ref{finale}, by restricting $H^0(\mathcal I^2_{\mathfrak{n}/\PP^3}(4))$ first to $\Gamma$ and then further to $N_1+\cdots+N_4$, one proves the existence of a family $\F$ of quartics $\overline{S}$ as in (i) of dimension $t\geq 3$ and we will show that $t=4$. We specialize to nodes and lines on the projective model of a Nikulin surface $S$. In this case the strict transform in $S$ of the restriction to $\overline S$ of any quartic in $\F\setminus\{\overline S\}$ is a divisor in the linear system
  \begin{equation}\label{petra}|4R-\Gamma-N_1-N_2-2N_3-\cdots-2N_8|=|R'+2\Gamma|.\end{equation} One sees that $\Gamma$ is a base divisor of \eqref{petra} and that $R'+\Gamma$ ie nef with $(R'+\Gamma)^2=4$, whence this linear system has has dimension $3$. Hence  $t= 4$ in this case. Let $\Z^{\n}\subset \Sym^2(\PP^2)\times \Sym^6(\Gamma)$ denote the subscheme parametrizing nodes and lines on projective models of Nikulin surfaces. We should have 
 $$\dim \Z^{\n}+4-\dim\Aut(\Gamma)=\dim\F_{11}^{\n,ns}=11;$$
 hence $\Z^{\n}$ is dense in $\Sym^2(\PP^2)\times \Sym^6(\Gamma)$, yielding (i),(ii),(iii).
 that is,  and that two quartics are isomorphic if they differ by the action of $\mathrm{Aut}(\Gamma)$.
 \end{proof}

\begin{proof}[Proof of Theorem \ref{enri}]
The rationality of $\F_{11}^{\n,ns}$, $\P_{3}$ and $\P'_{2}$ follows from  \cite[Thm. 1.2]{KLV2} and can be alternatively obtained from Proposition \ref{finale1}(iii). 

In order to prove that $r_{2,6}'$ is dominant, it is enough to show that for a general $(S,M,H,C')\in \F_{11}^{\n,ns}$ there exists a curve $C'\in |R'|$ such that $(C',N_3+\cdots+N_8|_C,M^\vee|_C)$ is contained in a $4$-dimensional family of Nikulin surfaces. It is then enough to choose $C':=\Gamma+N_1+N_2$ and apply Proposition \ref{finale1}(ii).

As concerns $r_{3,2}$, we need to show that its fiber containing a general $(S,M,H,C)\in \P_{3}$ is $6$-dimensional, or equivalently, that the family of Nikulin surfaces containing $(C,N_1+N_2|_C,M^\vee|_C)$ has dimension $6$.  We use the rational parametrization of  $\F_{11}^{\n,ns}$ provided in \cite[\S 5]{KLV2}. Let $\gamma\subset \PP^3$ be a fixed twisted cubic and denote by $p:T\to \PP^3$  the blow-up of $\PP^3$ along $\gamma$ and by $P_\gamma=p^{-1}(\gamma)$ the exceptional divisor. The pair $(p^*\O_{\PP^3}(1), p^*\O_{\PP^3}(2)-P_\gamma)$ provides an embedding of the threefold $T$ in $\PP^2\times \PP^3$ so that $T=(\PP^2\times \PP^3)\cap \PP^9\subset \PP^{11}$ and $\omega_T\simeq \mathcal O_T(-1,-2)$, where $\PP^2\times \PP^3\subset\PP^{11}$ is the Segre embedding. Denoting by $p':T\to \PP^2$ the first projection, the intersection of the exceptional divisor $P_\gamma$ with $P_l:=p'^{-1}(l)$, where $l\subset\PP^2$ is any line, is a smooth rational curve $\Gamma_l\subset T$. By \cite[Lem. 5.3]{KLV2}, every non-standard Nikulin surface $S$ of genus $11$ can be realized as an element of $|I_{\Gamma_l/T}(1,2)|$ for some $l$ so that $\O_S(1,0)\simeq R'$,  $\O_S(0,1)\simeq R$ and $\Gamma_l+N_1+N_2\in |R'|$. Let$(S,M,H,C)\in \P_{3}$ be general and consider the embedding $ S\subset T\subset \PP^2\times \PP^3$ restricting to the embedding $C\subset \PP^2\times \PP^2$ defined by the linear systems $(|\omega_C\otimes \eta^\vee|, |\omega_C|)$ where $\eta^\vee=M|_C$. The line $l\subset \PP^2$ for which $\Gamma_l$ is contained in $S$ is the image under $p'$ of the marked points $x_1,x_2\in C$. The ideal sequence of $C\cup\Gamma_l\subset S\subset T$ twisted by $\O_T(1,2)$ is
$$
0\longrightarrow \O_T\longrightarrow I_{C\cup \Gamma_l/T}(1,2)\longrightarrow I_{C\cup \Gamma_l/S}(1,2)\longrightarrow 0.
$$
We compute that $I_{C\cup \Gamma_l/S}(1,2)\simeq R+N_1+N_2$ and $N_1,N_2$ are its base components. As a consequence, $h^0(S, I_{C\cup \Gamma_l/S}(1,2))=h^0(S,R)=4$ and $h^0(T, I_{C\cup \Gamma_l/T}(1,2))=5$. Having fixed $C\subset T$, all Nikulin surfaces containing the embedded curve $C$ also contain the curve $\Gamma_l$ and thus move in a family of dimension $4$. In order to compute the dimension of the fiber of $r_{3,2}$ over the point $(C,x_1+x_2,\eta)\in \R_{3,2}$, it remains to bound the dimension of the family $\E$ of all possible embeddings of $C$ in $T$ so that the line bundle $p^*\O_T(2)-P_\gamma$ restricts to $\omega_C\otimes \eta^\vee$ and $p^*\O_T(1)$ restricts to $\omega_C$. Any such embedding factors as $C\subset Y\subset T$, where $Y\in |\O_T(0,1)|$ is a Del Pezzo surface of degree $6$ and $p'|_Y:Y\to \PP^2$ is the blow-up of $\PP^2$ at the $3$ nodes $y_1,y_2,y_3$ of the plane quintic $p'(C)\subset \PP^2$. Setting $l:=\O_Y(1,0)$ and denoting by $E_1,E_2,E_3\subset Y$ the exceptional divisors, it turns out that $O_Y(0,1)\simeq 2l-E_1-E_2-E_3$. Up to the action of $\Aut(T)=\Aut(\gamma)$, we may assume that $Y\subset T$ is fixed and we reduce to control the possible embeddings $C\subset Y$ satisfying $C\sim 5l-2E_1-2E_2-2E_3$ and $l|_C\simeq \omega_C\otimes \eta^\vee$. Any two such embeddings differ by an automorphism of $Y$, or equivalently, by an automorphism of $\PP^2$ fixing $y_1,y_2,y_3$. We conclude that $\dim \E=2$ and thus $r_{3,2}^{-1}(C,x_1+x_2,\eta^\vee) $ has dimension $6$, as wanted.

\end{proof}

\section{The case  $\R_{2,4}$}

This section aims to prove the following result:

\begin{thm}\label{giorgio}
  The moduli spaces  $\widehat{\F}_{9}^{\n,ns}$ and $\widehat\P_{2}$ are rational,
$\F_{9}^{\n,ns}$ is unirational, and 
the map $r_{2,4}$ is dominant. In particular, the moduli space $\R_{2,4}$ is unirational.
\end{thm}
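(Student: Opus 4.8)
The plan is to follow the blueprint used for the other low-genus cases, adapted to the fact that here $g(R)=g(R')=2$ (since $h=9 \equiv 1 \; \mod 4$), so that both $|R|$ and $|R'|$ define double covers of $\PP^2$ rather than embeddings. In the model $\varphi_{R'}\colon S\to\PP^2$, the curves $N_1,\dots,N_4$ (those with $R'\cdot N_i=0$) are contracted to four nodes $x_1,\dots,x_4$ of the branch sextic $X$, while $N_5,\dots,N_8$ are mapped isomorphically onto four lines $\ell_5,\dots,\ell_8$, each tritangent to $X$ because $\varphi_{R'}^{-1}(\ell_j)=N_j+N_j'$ with $N_j\cdot N_j'=3$; moreover $R-N_5-N_6$ and $R-N_7-N_8$ are irreducible $(-2)$-curves whose images are conics through $x_1,\dots,x_4$, so that $R$ is recovered from $X$ and the $N_i$. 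Symmetrically, $\varphi_{R}$ exhibits $S$ as a double plane with four nodes and four tritangent lines, and combining the two covers one may realize $S$ as a complete intersection of a $(1,1)$-divisor and a $(2,2)$-divisor in $\PP^2\times\PP^2$ passing through eight suitably placed lines. First I would invoke \cite[Thm.~1.2]{KLV2}, where the rationality of $\widehat{\F}_{9}^{\n,ns}$ is proved; since $\widehat{\F}_{9}^{\n,ns}$ dominates $\F_{9}^{\n,ns}$, the latter is unirational, and the same parametrization realizes $\widehat\P_{2}$ (birational to a $\PP^2$-bundle over $\widehat{\F}_{9}^{\n,ns}$) as a rational variety. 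Alternatively one reproduces this directly, in the spirit of the proofs of Theorems \ref{thm:5,2}--\ref{ale}, by descending --- via Kempf's descent lemma along the action of $\PGL(2)$ --- a tower of projective bundles over the rational parameter space of the four tritangent lines (with the four nodes normalized to a projective frame by $\PGL(3)$), parametrizing the auxiliary conics through the nodes and the sextics with the prescribed singularities and tangencies.

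The new ingredient is the dominance of $r_{2,4}\colon\widehat\P_{2}\to\R_{2,4}$. Since $\dim\widehat\P_{2}=11+2=13$ and $\dim\R_{2,4}=3\cdot 2-3+4=7$, it suffices to show that a general fiber of $r_{2,4}$ has dimension $6$. Take a general $(S,M,H,R,C)\in\widehat\P_{2}$ with image $(C,x_1+x_2+x_3+x_4,M^\vee|_C)\in\R_{2,4}$; here $C\in|R|$ has genus $2$, $R|_C\cong\omega_C$ by adjunction, $M|_C^{\otimes 2}\cong\O_C(x_1+\dots+x_4)$, and --- using now the model $\varphi_{R}\colon S\to\PP^2$, which contracts $N_5,\dots,N_8$ to the four nodes of its branch sextic $B$ and maps $N_1,\dots,N_4$ onto four tritangent lines $\ell_1,\dots,\ell_4$ --- one has $C=\varphi_R^{-1}(\ell)$ for a general line $\ell$, so that the hyperelliptic pencil $C\to\ell$ is branched along $B\cap\ell$ and the marked point $x_i$ lies over $p_i:=\ell\cap\ell_i$. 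The point of $\R_{2,4}$ pins down $\ell\cong\PP^1$ together with the six points of $B\cap\ell$ and the four points $p_i$, and --- through $M^\vee|_C$ --- the square root of $\O_C(x_1+\dots+x_4)$ determined by the Nikulin structure. After quotienting by $\PGL(3)$ we may fix $\ell\subset\PP^2$ with these ten marked points, which leaves only the $3$-dimensional group of projectivities fixing $\ell$ pointwise; the fiber of $r_{2,4}$ is then the quotient by this group of the space parametrizing $(\ell_1,\dots,\ell_4,n_5,\dots,n_8,B)$, with $\ell_i$ a line through $p_i$, $n_5,\dots,n_8\in\PP^2$, and $B$ a sextic singular at $n_5,\dots,n_8$, passing through the six fixed points of $B\cap\ell$, and tritangent to $\ell_1,\dots,\ell_4$ (and subject to the connectedness conditions that keep the eight $(-2)$-curves disjoint). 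A dimension count in the style of Propositions \ref{finale}--\ref{finale2} gives $(4+8+27)-(12+6+12)=9$ for this space, hence $9-3=6$ for the fiber, as expected. To make this rigorous one specializes $C$ to a totally degenerate member $\gamma+N_5+N_6$ of $|R|$ (with $\gamma=R-N_5-N_6$ the $(-2)$-curve of the first paragraph), reducing to the Severi variety that underlies the parametrization of $\widehat{\F}_{9}^{\n,ns}$, where the expected dimension is attained; upper semicontinuity then yields a $6$-dimensional general fiber. Hence $r_{2,4}$ is dominant, and $\R_{2,4}$ is unirational because $\widehat\P_{2}$ is rational.

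The main obstacle will be precisely this last step: showing that $(C,x_1+\dots+x_4,M^\vee|_C)$ does not over-determine $S$, i.e. that the space of sextics $B$ above has exactly the expected dimension and no more. As in all the previous cases, this rests on two things: the connectedness criteria forcing the eight curves $N_i$ to remain pairwise disjoint after the degeneration of $C$ (these are the analogues, for the double-plane model, of the cohomological disjointness checks used in Lemmas \ref{lemma:Juve-15}--\ref{lemma:incubichesing}), and the vanishing $h^1=0$ for suitable big and nef divisors on the rank-$9$ Nikulin $K3$, which become explicit exactly when $C$ degenerates to a chain of $(-2)$-curves. Everything else is the familiar bookkeeping with Kempf's descent lemma and with linear systems of quadrics/cubics/sextics through prescribed configurations.
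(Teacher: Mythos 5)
Your first paragraph is essentially the paper's argument: rationality of $\widehat{\F}_{9}^{\n,ns}$ (and hence of $\widehat\P_{2}$, and unirationality of $\F_{9}^{\n,ns}$) is indeed obtained by citing \cite{KLV2}; the paper is a bit more precise, quoting \cite[Thm. 4.5]{KLV2} (rationality of a space of quadruples $(S,M,H,N_1+\cdots+N_4)$ which is birational to $\widehat{\F}_{9}^{\n,ns}$) rather than \cite[Thm. 1.2]{KLV2}, but the content is the same.

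The dominance of $r_{2,4}$ is where your proposal diverges substantially from the paper, and where the gap lies. You work with the double-plane model $\varphi_R\colon S\to\PP^2$ and propose a Severi-variety-style parameter count for plane sextics $B$ with four nodes and four tritangent lines, arriving at $(4+8+27)-(12+6+12)=9$ and then $9-3=6$ after quotienting by the pointwise stabilizer of $\ell$. There are several issues. First, the count assumes transversality of the node, tangency, and passing-through conditions, which is not automatic; more seriously, it does not account for the extra codimension coming from the requirement that $N_1+\cdots+N_8$ be $2$-divisible in $\Pic(S)$ (equivalently, that one genuinely obtains a Nikulin lattice and that the eight $(-2)$-curves are pairwise disjoint). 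This is exactly the issue the authors themselves wrestled with in the material they ultimately cut: the suppressed Lemmas on cubics through $X\cap(\ell_i+\ell_j+\ell_k)$ and the companion Proposition on sextics tritangent to five lines and a conic show how delicate these disjointness conditions are in the double-plane picture, and that material was abandoned precisely because it did not yield a clean dimension count. Second, your proposed route to rigor --- ``reducing to the Severi variety that underlies the parametrization of $\widehat{\F}_{9}^{\n,ns}$'' --- does not exist: \cite[\S 4]{KLV2} parametrizes genus-$9$ non-standard Nikulin surfaces as $(2,2)$-divisors in the Del Pezzo threefold $T=(\PP^2\times\PP^2)\cap\PP^7\subset\PP^8$, not via Severi varieties of plane sextics. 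Third, the degeneration $C\rightsquigarrow\gamma+N_5+N_6$ does not interact well with the double-plane model: under $\varphi_R$, the irreducible curve $\gamma=R-N_5-N_6$ is $\iota$-invariant (it is supported on $\iota$-fixed classes), so it maps $2{:}1$ onto the \emph{line} through $n_5$ and $n_6$, and $N_5,N_6$ contract to $n_5,n_6$; thus $\varphi_R(C)$ is just that line, not a general line $\ell$ carrying the ten marked points your count needs. So the specialization you invoke to control the fiber dimension actually destroys the very configuration on which the count is based.

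The paper avoids all of this by running the degeneration inside the Del Pezzo threefold $T$: after normalizing $N_1,\dots,N_4\subset T$ using $\Aut(T)$, it shows the embedding of the semistable curve $X=\gamma+N_5+N_6$ in $T$ is uniquely determined by the pointed abstract isomorphism class, and then computes $\dim\vert I_{\gamma+N_1+\cdots+N_8/T}(2,2)\vert=2$ via a short exact sequence on $S$ (using that $\gamma'$ is a base component of $F+\gamma'$), so that the family $\F$ of Nikulin surfaces through $X$ fibers with $2$-dimensional fibers over $\Sym^2(\PP^2)$ (the choice of $N_7,N_8$), giving $\dim\F=2+4=6$. This is an exact linear-algebra computation, not an expected-dimension heuristic, and it circumvents the $2$-divisibility issue because the ambient construction in $T$ builds the Nikulin lattice in from the start. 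To salvage your approach you would essentially need to reprove the relevant parts of \cite[\S 4]{KLV2} or redo the suppressed connectedness lemmas; as written, the fiber-dimension step is a genuine gap.
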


The unirationality of $\F_{9}^{\n,ns}$ was proved in \cite[Thm.1.2]{KLV2}. More precisely, in \cite[Thm. 4.5]{KLV2} the rationality of a space parametrizing quadruples $(S,M,H,N_1+\cdots +N_4)$ with $(S,M,H) \in \F_{9}^{\n,ns}$ and $N_1+\cdots+N_4$ a suitable subset of the eight $(-2)$-curves in the Nikulin lattice was proved; one easily sees that this space is birational to $\widehat{\F}_{9}^{\n,ns}$. Thus, the rationality of
$\widehat{\F}_{9}^{\n,ns}$, whence of $\widehat\P_{2}$, is known. It remains to show that $r_{2,4}$ is dominant. 
\begin{proof}[Proof of Theorem \ref{giorgio}]
To prove dominance, we show that the fiber of $r_{2,4} $ containing a general $(S,M,H,C)\in\widehat\P_{2}$ has dimension $6$, or equivalently, the family of marked  non-standard Nikulin surfaces  of genus $9$ containing  $(C,N_1+\cdots +N_4|_C,M^\vee|_C)$ has dimension $6$. A Nikulin surface $S$ as above carries two divisors
\begin{equation*}
\gamma:=R-N_5-N_6,\,\,\gamma':=R-N_7-N_8,
\end{equation*}
which are irreducible ($-2$)-curves such that $\gamma+\gamma'\sim H-N_1-\cdots-N_8=:F$ is an elliptic pencil. We may thus specialize $C$ to the curve $\gamma+N_5+N_6\in |R|$ marked with the points $x_1,\ldots,x_4\in \gamma$, which are the intersection points with $N_1,\ldots, N_4$. We denote $\gamma+N_5+N_6$ by $X=X_1\cup_{y,y'}\PP^1\cup_{z,z'}\PP^1$ if we consider it as an abstract pointed semistable curve of genus $2$: the component $X_1\simeq\PP^1$ is marked with the four points $x_1,\ldots,x_4$ as well as with two pairs of points $x_5,x_5'$ and $x_6,x_6'$ where two copies of $\PP^1$ (called exceptional components in the sequel) are attached. Let $\F$ be the family of marked non-standard Nikulin surfaces containing the pointed curve $X$ in such a way that $X_1\in |R-N_5-N_6|$, $\{x_5,x_5'\}=X_1\cap N_5$, $\{x_6,x_6'\}=X_1\cap N_6$ and $\{x_i\}=X_1\cap N_1$ for $1\leq i\leq 4$. To show that $\F$ has dimension $6$, we exploit the construction provided in \cite[\S 4]{KLV2}. 

Let $T:=(\PP^2\times \PP^2)\cap\PP^7\subset\PP^8$ be a fixed Del Pezzo threefold and denote by $p'$ and $p$ the two projections; by \cite[Prop. 4.3]{KLV2}, a general genus $9$ Nikulin surface $S$ of non-standard type lives inside of $T$ as a divisor of type $(2,2)$ containing four vertical lines $N_1,\ldots, N_4$ and four horizontal lines $N_5,\ldots, N_8$, so that $\O_S(1,0)\simeq R'$,  $\O_S(0,1)\simeq R$. Since the lines $N_1,\ldots, N_4\subset \PP^8$ span $\PP^7$, up to the action of $\Aut(T)$ (which consists of the automorphisms of $\PP^2\times \PP^2$ fixing $\PP^7$) we may assume $N_1,\ldots, N_4$ to be fixed. The choice of $N_5,\ldots,N_8$ is then equivalent to the choice of the four points $n_5,\ldots, n_8\in \PP^2$ such that $n_i=p(N_i)$ for $5\leq i\leq 8$, or equivalently, of the four lines $l_i:=p'(N_i)\subset\PP^2$. 

First of all, we show that the starting embedding $\gamma+N_5+N_6\subset T$ is the only possible embedding of $X$ in $T$ such that $p'$ maps $X_1$ to a plane conic and the exceptional components to two lines, while $p'$ contracts the exceptional components and sends $X_1$ to a line. Setting $n_i=p'(N_i)\in\PP^2$ for $1\leq i\leq 4$ (these points are fixed by construction), there is a unique plane conic $\mathfrak c$ through $n_1,\ldots, n_4$ such that $(\mathfrak c,n_1+\cdots+n_4)$ is isomorphic to $(X_1,x_1+\cdots+x_4)$ as a pointed curve. Hence, for any embedding $X\subset T$ as above, we get $p'(X)=\mathfrak c+l_5+l_6$, where $l_5,l_6$ are the lines intersecting $\mathfrak c$ at the points $x_5,x_5'$ and $x_6,x_6'$, respectively. Since the lines $l_5,l_6$ uniquely determine two horizontal lines in $T$, in any embedding $X\subset T$ as above the exceptional components coincide with two fixed $N_5,N_6$. As a consequence, $p(X)$ coincides with the line $r$ through $n_5,n_6$. In conclusion, both $p'(X)$ and $p(X)$ are fixed and thus $X=(p'(X)\times p(X))\cap T\subset T$ is unique.   

The family $\F$ thus coincides with the family of marked non-standard Nikulin surfaces containing the embedded curve $\gamma+N_5+N_6\subset T$. If $N_7,N_8\subset T$ are two general horizontal lines, one may easily check that $\dim\,|I_{\gamma+N_1+\cdots+N_8/T(2,2)|}|=2$ by using the short exact sequence 
$$
0\longrightarrow \O_T\longrightarrow I_{\gamma+N_1+\cdots+N_8/T}(2,2)\longrightarrow I_{\gamma+N_1+\cdots+N_8/S}(2,2)\longrightarrow 0
$$
and the fact that $I_{\gamma+N_1+\cdots+N_8/S}(2,2)\simeq F+\gamma'$ has $\gamma'$ as base component. Hence the natural map $\F\to \Sym^2(\mathbb P^2)$, which sends $(S,M,H)$ to $(N_7,N_8)$, has $2$-dimensional fibers; this yields $\dim\F=6$, as wanted. \color{black}
\end{proof}
\color{black}

\appendix

\section{Irreducibility of $\widehat{\F}_h^{\n,ns}$} \label{appendix}


The forgetful double cover $\widehat{\F}_h^{\n,ns} \to \F_h^{\n,ns}$ is  unramified over the proper closed locus $(\F_h^{\n,ns})^{\tiny{\mbox{aut}}}$ parametrizing nonstandard Nikulin surfaces $S$ possessing an automorphism exchanging $R$ and $R'$. In particular, $\widehat{\F}_h^{\n,ns}$ is
smooth except possibly over $(\F_h^{\n,ns})^{\tiny{\mbox{aut}}}$.

We will prove:

\begin{prop} \label{prop:app}
The moduli space $\widehat{\F}_h^{\n,ns}$ is irreducible for $h \geq 9$. 
\end{prop}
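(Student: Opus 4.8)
The plan is to describe $\widehat{\F}_h^{\n,ns}$ as a quotient of a connected period domain by an arithmetic group and to show that this group does not lie inside the index‑two subgroup responsible for a splitting; connectedness (hence, by normality, irreducibility) will follow. (For $h\equiv 3\;\mod 4$ the classes $R$ and $R'$ are numerically distinguishable and there is nothing to prove, so assume $h\equiv 1\;\mod 4$.)

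First I would set up the period description. Let $L_h:=\ZZ[R]\+\mathbf{N}$ be the Picard lattice of a very general marked non‑standard Nikulin surface, of rank $9$ and signature $(1,8)$ by \eqref{eq:pic9}, and let $\Lambda:=L_h^{\perp}$ inside the $K3$ lattice $U^{\+3}\+E_8^{\+2}$, an even lattice of signature $(2,11)$; write $\Omega_\Lambda\subset\PP(\Lambda\*\CC)$ for its period domain, a type IV domain having exactly two connected components $\Omega_\Lambda^{\pm}$, interchanged by $O(\Lambda)\setminus O^+(\Lambda)$. Using the global Torelli theorem and the surjectivity of the period map — together with the uniqueness of the primitive embedding $L_h\hookrightarrow U^{\+3}\+E_8^{\+2}$, part of the lattice analysis in \cite{GS,vGS} — one identifies $\widehat{\F}_h^{\n,ns}$ with $\Omega_\Lambda/\widehat\Gamma$, where $\widehat\Gamma\subset O(\Lambda)$ consists of those isometries that extend to the $K3$ lattice fixing the polarization $H$, preserving the Nikulin lattice $\mathbf{N}$, and fixing the class $R$ (rather than swapping $R$ with $R'$). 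Being a quotient of the smooth $\Omega_\Lambda$ by a group acting properly discontinuously, $\widehat{\F}_h^{\n,ns}$ has quotient singularities, hence is normal, so it is irreducible as soon as it is connected; and it is connected precisely when $\widehat\Gamma\not\subseteq O^+(\Lambda)$, since in that case the projection $\Omega_\Lambda^{+}\to\Omega_\Lambda/\widehat\Gamma$ is already surjective.

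It remains to exhibit an element of $\widehat\Gamma$ interchanging $\Omega_\Lambda^{+}$ and $\Omega_\Lambda^{-}$. For this I would invoke the stable orthogonal group $\widetilde O(\Lambda)$ of isometries acting trivially on the discriminant $A_\Lambda\cong A_{L_h}(-1)$: every $g\in\widetilde O(\Lambda)$ extends to the $K3$ lattice as $\id_{L_h}\+g$ by Nikulin's gluing along the shared discriminant, and $\id_{L_h}$ obviously fixes $H$, $\mathbf{N}$ and $R$, so $\widetilde O(\Lambda)\subseteq\widehat\Gamma$. Now $\Lambda$ represents $2$ by a primitive vector — indeed $\Lambda$ splits off a hyperbolic plane $U$, as $\mathrm{rk}\,\Lambda=13\geq \ell(A_\Lambda)+2$ — so choose $v\in\Lambda$ primitive with $v^2=2$. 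The reflection $s_v$ lies in $\widetilde O(\Lambda)$ (because $(x,v)\in\ZZ$ for every $x\in\Lambda^{\ast}$, so $s_v$ acts trivially on $A_\Lambda$), and, computing in a real orthonormal frame adapted to $v$, $s_v$ reverses the orientation of every positive‑definite $2$‑plane; hence $s_v\notin O^+(\Lambda)$. Therefore $\widehat\Gamma\supseteq\widetilde O(\Lambda)\not\subseteq O^+(\Lambda)$, and we are done.

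The step I expect to be the main obstacle is the moduli‑theoretic bookkeeping of the second paragraph: one must verify that the \emph{marked} space $\widehat{\F}_h^{\n,ns}$ is genuinely a period quotient and that the relevant group is exactly the index‑two subgroup $\widehat\Gamma$ of the group attached to $\F_h^{\n,ns}$ — this rests on the same inputs (unique primitive embedding, explicit discriminant form of $L_h$) that underlie the irreducibility of $\F_h^{\n,ns}$ itself \cite{Do}, and the point of the appendix is to carry this through without losing connectedness. A more geometric alternative, which I would keep in reserve, goes through the ramification of the cover: were $\widehat{\F}_h^{\n,ns}$ disconnected, its two components would be interchanged by the deck involution, forcing the cover to be everywhere unramified, so it would suffice to exhibit one non‑standard Nikulin surface of genus $h$ with an automorphism exchanging $R$ and $R'$ — for instance a member of one of the explicit models from the body of the paper chosen symmetric under the duality swapping $\varphi_{R}$ and $\varphi_{R'}$ — the only non‑routine point there being to check that such a symmetric surface is still of non‑standard, rather than standard, type.
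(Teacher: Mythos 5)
Your main argument takes a genuinely different route from the paper's. The paper works inside the explicit partial compactification $\overline{\F}_h^{\n,ns}$ of \cite{KLV1}: in a boundary divisor $\mathfrak D$ parametrizing reducible surfaces $X\sqcup_A X'$ glued along an elliptic curve, it exhibits a one- or two-parameter family whose monodromy interchanges the classes $R$ and $R'$ of~\eqref{eq:keven}--\eqref{eq:kodd}; since $\overline{\F}_h^{\n,ns}$ is smooth and irreducible and $\F_h^{\n,ns}$ is dense in it, this forces the extended double cover, hence $\widehat{\F}_h^{\n,ns}$, to be connected. You instead go through the period description: identify $\widehat{\F}_h^{\n,ns}$ with $\Omega_\Lambda/\widehat\Gamma$, show $\widetilde O(\Lambda)\subseteq\widehat\Gamma$ by Nikulin gluing, and then produce a reflection $s_v$, $v^2=2$, lying in $\widetilde O(\Lambda)\setminus O^+(\Lambda)$. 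The lattice-theoretic steps are sound: $\Lambda=L_h^\perp$ has rank $13$ and signature $(2,11)$, $\ell(A_\Lambda)\leq 9$ so a hyperbolic plane splits off, $s_v$ acts trivially on $A_\Lambda$, and $s_v$ reverses orientation of positive-definite $2$-planes. What this buys is a short, conceptual argument once the period description is in place; what it costs is precisely that description — one must prove that the period map identifies the \emph{marked} coarse space with $\Omega_\Lambda/\widehat\Gamma$ on the nose (not merely dominates it), and that the right $\widehat\Gamma$ is the index-two subgroup you wrote down. You flag this yourself as the main obstacle, and it is: carrying it out is comparable in length and delicacy to the paper's geometric construction. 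Both are legitimate strategies.

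One caution about your "back-up" plan: it runs in the opposite direction from the paper. The paper explicitly works in $\F_h^{\n,ns}\setminus(\F_h^{\n,ns})^{\mathrm{aut}}$ and produces a monodromy loop there, whereas you propose to exhibit a point of $(\F_h^{\n,ns})^{\mathrm{aut}}$ and argue that its presence forces ramification of the coarse-space double cover, contradicting disconnectedness. That is plausible but subtler than it looks: over a surface with an $R\leftrightarrow R'$ automorphism, $(S,M,H,R)$ and $(S,M,H,R')$ are literally the same point of $\widehat{\F}_h^{\n,ns}$, and what "ramified" means for the resulting coarse-space map (which need not be flat) requires care; moreover you would still need to construct such a symmetric surface, and none of the explicit projective models in the body of the paper is manifestly symmetric in $\varphi_R$ and $\varphi_{R'}$. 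The paper's choice to route the monodromy argument entirely through a boundary divisor of general members without automorphisms avoids both issues.
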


To prove the proposition it will be enough to exhibit a deformation of some $(S,M,H) \in \F_h^{\n,ns} \setminus (\F_h^{\n,ns})^{\tiny{\mbox{aut}}}$ carrying $R$ to $R'$.

We recall the smooth partial compactification $\overline{\F}_h^{\n,ns}$ of $\F_h^{\n,ns}$ constructed in \cite[Cor. 5.9]{KLV1} obtained by adding a smooth divisor parametrizing reducible surfaces as we now explain. (All details can be found in \cite[\S 5]{KLV1}.)

  Set $h=4k+5$, with $k \geq 1$.
Choose a smooth elliptic curve $A$, a general $x \in A$ and $L_1, L_2 \in \Pic^2(A)$ such that
\begin{equation}\label{eins}L_1^{\*2} \cong L_2^{\*2}.\end{equation} Then $L_1$ and $L_2$ provide an embedding $A\subset\PP^1 \x \PP^1$ such that $x$  uniquely determines a quadrilateral of lines $N_1+N_2+N_3+N_4$ in $\PP^1 \x \PP^1$, 
where $N_1, N_2$ and $N_3 , N_4$ respectively are in $\vert \mathcal O_{\PP^1 \x \PP^1}(1,0) \vert$ and $\vert \mathcal O_{\PP^1 \x \PP^1}(0, 1) \vert$, with $N_1 \neq N_2$ and $N_3 \neq N_4$. 
Denote the four singular points of the quadrilateral by
\[ e_{ij} := N_i \cap N_j, \;\; 1 \leq i \leq 2, \;\; 3 \leq j \leq 4.\]

\begin{center}
\begin{tikzpicture}[scale=1]
\draw (-1.2,-1) -- node[below] {$N_3$} (1.2,-1);
\draw (-1,-1.2) -- node[left] {$N_1$} (-1,1.2);
\draw (-1.2,1) -- node[above] {$N_4$} (1.2,1);
\draw (1,-1.2) -- node[right] {$N_2$} (1,1.2);
\draw (-1,1)  node[above left] {$x=e_{14}$};
\draw (-1,-1)  node[below left] {$e_{13}$}; 
\draw (1,1)  node[above right] {$e_{24}$};
\draw (1,-1)  node[below right] {$e_{23}$}; 
\draw[fill] (-1,1) circle [radius=1pt];
\draw[fill] (-1,-1) circle [radius=1pt];
\draw[fill] (1,1) circle [radius=1pt];
\draw[fill] (1,-1) circle [radius=1pt];
\end{tikzpicture}
\end{center}

Let $X_4$
be the blowing up of $e_{14},e_{24}, e_{23}, e_{13}$ and denote by
$E_{ij} := \sigma^{-1}(e_{ij})$ the four exceptional curves on $X_4$. By abuse of notation, we still denote the strict transform of $N_j$ on $X_4$ by $N_j$. 
These are four disjoint smooth, rational curves of self-intersection $-2$.

\begin{center}
\begin{tikzpicture}[scale=1],cap=round,>=latex]
\draw (-1.2,-1.5) -- node[below] {$N_3$} (1.2,-1.5);
\draw (-1.5,-1.2) -- node[left] {$N_1$} (-1.5,1.2);
\draw (-1.2,1.5) -- node[above] {$N_4$} (1.2,1.5);
\draw (1.5,-1.2) -- node[right] {$N_2$} (1.5,1.2);
\draw (-1.7,0.8)  -- node[below right] {$E_{14}$} (-0.8,1.7);
\draw (-1.7,-0.8) --  node[above right] {$E_{13}$} (-0.8,-1.7) ; 
\draw (0.8,1.7)  -- node[below left] {$E_{24}$} (1.7,0.8);
\draw (0.8,-1.7)  -- node[above left] {$E_{23}$} (1.7,-0.8);
\end{tikzpicture}
\end{center}

We make the same construction starting from the same $A$ but a (possibly) different point $x'=e_{14}'\in A$ satisfying
\begin{equation}\label{drei}
    ke_{14} \sim ke_{14}' \; 
\;  \mbox{on} \; \; A 
\end{equation}
and $L'_1, L'_2 \in \Pic^2(A)$ such that
\begin{equation}\label{zwei}{L'}_1^{\*2} \cong {L'}_2^{\*2},\end{equation}
thus obtaining another surface $X'_4$. We use the notation $e_{ij}'$, 
$N'_i$ and  $E_{ij}'$ for the objects analogous to $e_{ij}$, 
$N_i$ and  $E_{ij}$. 
Now choose other $8$ points $u_{1},\ldots,u_{4}, u_{1}',\ldots,u_{4}'$ on $A$ 
 satisfying:
\[
u_1+u_2+u_3+u_4+u_1'+u_2'+u_3'+u_4'\in |L_1^{\otimes2}\otimes L_2^{\otimes 2}|
\]
and denote by $X$ (respectively, $X'$) the blow up of $X_4$ (resp., $X'_4$) at $u_{1},\ldots,u_{4}$ (resp. $u_{1}',\ldots,u_{4}'$). By abuse of notation, we denote the strict transform on $X$ (or $X'$) of a divisor on $X_4$ (or $X_4'$) still by the same name. The surface $S:= X \sqcup_{A} X'$ obtained by gluing $X$ and $X'$ transversally along $A$ is a flat limit of $K3$ surfaces. 
One easily verifies (cf. \cite[Lemma 5.1]{KLV1}) that the divisors
\[ \Delta:= (k+1)N_1+N_2+N_3+(k+1)N_4+(2k+1)E_{14}+E_{13}+E_{24}+E_{23} \]
on $X$ and
\[ \Delta':=(k+1)N'_1+N'_2+N'_3+(k+1)N'_4+(2k+1)E'_{14}+E'_{13}+E'_{24}+E'_{23}  \]
 on $X'$ glue to form a Cartier divisor $H$ on $S$ (cf. \cite[(35)]{KLV1}).

One shows that $S$ also carries the line bundle $M:=\frac{1}{2}\left(N_1+\cdots+N_4+N'_1+\cdots+N'_4\right)$.  and two line bundles $R$ and $R'$ defined as follows
\begin{equation}
  \label{eq:keven}
  R:=\frac{1}{2}\left(H-N_1-N_2-N'_1-N'_2\right), \;  \; R':=\frac{1}{2}\left(H-N_3-N_4-N'_3-N'_4\right)
  \end{equation}
 if $k$ is even, and  
 \begin{equation}\label{eq:kodd}
  R:=\frac{1}{2}\left(H-N_1-N_3-N'_1-N'_3\right), \;  \; R':=\frac{1}{2}\left(H-N_2-N_4-N'_2-N'_4\right)
  \end{equation}
  if $k$ is odd (cf. \cite[p. 26]{KLV1}).

As proved in \cite[Cor. 5.9 and Prop. 5.4]{KLV1}, 
triples $(S,M,H)$ as above form a boundary divisor of a partial compactification $\overline{\F}_h^{\n,ns}$ of $\F_h^{\n,ns}$. Indeed, the construction of $S$
depends on $10$ parameters: one for the choice of $A$, one each for the choices of $x$, $L_1$ and $L'_1$ (which determine the choices of $x'$, $L_2$ and $L'_2$ in finitely many ways), minus one for the automorphisms of $A$, and then plus seven for $|L_1^{\otimes2}\otimes L_2^{\otimes 2}| \cong \PP^7$. While constructing $S$, we have also made a finite number of choices due to the relations \eqref{eins}-\eqref{zwei}; different choices correspond to different components of the obtained boundary divisor. We will henceforth consider a component $\mathfrak D$ where the relation 
\begin{equation}\label{vier}
L_1\otimes L_2^\vee\simeq L_1'\otimes {L'}_2^\vee
\end{equation}
holds. One may easily check that a general member of $\mathfrak D$ has no automorphisms. To prove the proposition, we will show that for a general $(S,M,H) \in \mathfrak D$ the pair $((S,M,H),R)$ can be deformed in $((S,M,H),R')$ remaining inside of $\mathfrak D$.

\begin{proof}[Proof of Proposition \ref{prop:app}]\
In the case $k$ even, we may act on $\Pic^2(A)\times \Pic^2(A)$ with $\Pic^0(A)$ as $\eta\cdot (\A_1,\A_2):=(\A_1\otimes \eta, \A_2\otimes \eta)$. Since $L_1^{\otimes 2}\simeq L_2^{\otimes 2}$, the pairs $(L_1,L_2)$ and $(L_2,L_1)$ are in the same orbit. This provides a one-parameter family of deformations of $X$ parametrized by $\Pic^0(A)$ so that two fibers of the family are $X$ itself but $(N_1,N_2)$ and $(N_3,N_4)$ are exchanged. Analogously, one obtains a  one-parameter family of deformations of $X'$ that plays the same game. By gluing, one obtains a two-parameter family of surfaces in $\mathfrak D$ so that two fibers of the family are the same $(S,M,H)$ but the divisors $R$ and $R'$ in \eqref{eq:keven} switch roles.

In the case $k$ odd, we may act on $A\times A$ with $A$ by $z\cdot (y_1,y_2):=(y_1\oplus z,y_2\oplus z)$, where the symbol $\oplus$ is the group operation on $A$. Since $e_{14}\ominus e_{23}$ is $2$-torsion, the pairs $(e_{14}, e_{23})$ and $(e_{23},e_{14})$ lie in the same orbit. This provides a one-parameter family of deformations of $X$ parametrized by $A$ so that two fibers of the family are $X$ itself but $e_{1,4}$ and $e_{2,3}$ are exchanged; as consequence, $(N_1,N_3)$ and $(N_2,N_4)$ are also exchanged. Since \eqref{drei} remains valid if we translate both $e_{1,4}$ and $e_{1,4}'$ by the same point, we can compatibly deform $X'$ thus obtaining a one-parameter deformation of $S=X\cup X'$. Relation \eqref{vier} yields $e_{14}'\ominus e_{23}'=e_{14}\ominus e_{23}$, which guarantees that the same translation interchanging $e_{1,4}$ and $e_{2,3}$ also swaps $e'_{1,4}$ and $e'_{2,3}$. We have thus obtained a one-parameter family of surfaces in $\mathfrak D$ with two fibers both isomorphic to $(S,M,H)$ but such that $R$ and $R'$ in \eqref{eq:kodd}  have switched roles. This concludes the proof of the proposition.
\end{proof}

  \end{document}